\documentclass[a4paper,11pt]{article}
\usepackage{amsmath, amsthm, amssymb}
\usepackage{graphicx}


\newcommand{\R}{{\cal{R}}}

\newcommand{\eps}{{\varepsilon}}
\newcommand{\pmX}{{X^{\pm 1}}}
\newcommand{\ws}{\mathcal{W}(X)}
\newcommand{\lp}[1]{\left\|{#1}\right\|}
\newcommand{\br}[2]{\left[{#1}\,;\,{#2}\right]_{ar}}

\DeclareMathOperator{\LLS}{LLS}
\newcommand{\lls}[2]{\LLS_{#1}(#2)}
\newcommand{\len}[1]{\left\vert{#1}\right\vert}

{\catcode`\|=\active
  \gdef\Set#1{\left\{\:{\mathcode`\|"8000\let|\SetVert #1}\:\right\}}}
{\catcode`\|=\active
  \gdef\Pres#1{\left\langle\:{\mathcode`\|"8000\let|\SetVert #1}\:\right\rangle}}
\def\SetVert{\egroup\;\middle|\;\bgroup}


\newtheorem{theorem}{Theorem}
\newtheorem{corollary}[theorem]{Corollary}
\newtheorem{lemma}[theorem]{Lemma}

\newtheorem{proposition}[theorem]{Proposition}

\newtheorem{remark}[theorem]{Remark}
\newtheorem{notation}[theorem]{Notation}
\newtheorem{observation}[theorem]{Observation}
\newtheorem{conjecture}[theorem]{Conjecture}

\theoremstyle{definition}
\newtheorem{definition}[theorem]{Definition}

\pagestyle{headings}

\title{On Shephard Groups with Large Triangles}
\author{Uri Weiss \\ uriw@tx.technion.ac.il}

\begin{document}

\maketitle

\begin{abstract}
Shephard groups are common extensions of Artin and Coxeter groups. They appear, for example, in algebraic study of manifolds. An infinite family of Shephard groups which are not Artin or Coxeter groups is considered. Using techniques form small cancellation theory we show that the groups in this family are bi-automatic.
\end{abstract}

\section{Introduction}

Let $a$ and $b$ be two letters in some alphabet and let $n$ be an element of $\mathbb{N}\cup\Set{\infty}$. We denote the alternating word in $a$ and $b$ of length $n$ that starts with $a$ by $\br{a,b}{n}$, i.e., $\br{a,b}{n} = ababa \cdots$ (for $n=\infty$ we take the empty word). An \emph{Artin} relation is a relation of the type $\br{a,b}{n}= \br{b,a}{n}$. A Shephard group is a group which has presentation which consists of Artin relations and relations of the type $a^p=1$ where $a$ is a generator and $p$ is a natural number. More formally, if $X=\Set{x_1,x_2,\ldots,x_n}$ is a (finite) generating set and $\mathcal{C}=(m_{ij})_{i,j=1}^{n}$ is a symmetric matrix with $m_{ij}\in\Set{2,3,4,\ldots}\cup\Set{\infty}$ then the Shephard group $G(X,\mathcal{C})$ is the group with the following presentation:
\[
\Pres{ X | \begin{array}{l}
  \br{x_i, x_j}{m_{ij}} = \br{x_j, x_i}{m_{ji}}, \quad 1\leq i<j\leq n \\
  x_i^{m_{ii}}=1, \quad i=1,\ldots,n
\end{array}
}
\]
A Shephard group with $m_{ii}=2$ for all $1\leq i\leq n$ is a Coxeter group. A Shephard group where $m_{ii}=\infty$ for all $1\leq i\leq n$ is an Artin group. Thus, from the combinatorial group theoretical point of view, Shephard groups are common generalizations of Artin Groups and Coxeter groups. Finite Shephard groups appear as groups of reflections of the complex plane \cite{ST54}. Infinite Shephard groups have appeared in context of algebraic geometry; for example see \cite{KM98}.

We consider the problem of bi-automaticity \cite{EPS92} of Shephard groups. Coxeter groups are known to be automatic \cite{BH93} and in many cases bi-automatic \cite{AS83,NR03}. The question of automaticity for Artin groups is still open (but several sub-classes are known to be automatic and bi-automatic; see for example \cite{Cha92, Pei96}). The following conjecture is an extension of a similar conjecture for Artin and Coxeter groups. In this work we show that it holds for an infinite family of Shephard groups which are not Artin or Coxeter groups.

\begin{conjecture}
Shephard groups are bi-automatic.
\end{conjecture}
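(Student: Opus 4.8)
The plan is to attack the conjecture by the route that already works at the two extremes it interpolates: for Coxeter groups one builds the Davis complex, equips it with a CAT(0) cube-complex structure (Niblo--Reeves, cf.\ \cite{NR03}), and quotes the theorem that a proper cocompact action on a CAT(0) cube complex yields a biautomatic structure; for the Artin groups already known to be biautomatic one uses the Deligne complex together with a Garside normal form or a systolic / small-cancellation argument (\cite{Cha92,Pei96}). So the first move is to build a common generalisation: a \emph{Shephard--Davis complex} $\Sigma=\Sigma(X,\mathcal{C})$ whose cells are indexed by cosets of the \emph{spherical} special subgroups --- the subgroups $G_Y$ generated by a subset $Y\subseteq X$ that happens to be finite --- glued with a piecewise-Euclidean metric modelled on the finite complex reflection groups of Shephard and Todd \cite{ST54}. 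One then checks that $\Sigma$ is contractible, that $G(X,\mathcal{C})$ acts on it properly, cocompactly and by isometries, and that $\Sigma$ specialises to the Davis complex when every $m_{ii}=2$ and to a thickening of the Deligne complex when every $m_{ii}=\infty$.

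The key steps, in order, would be: (1) reduce to the local pieces by classifying the rank $\le 2$ Shephard groups --- the cyclic groups $\langle x\mid x^{p}=1\rangle$ and the rank-two groups with $\br{x_i,x_j}{m}=\br{x_j,x_i}{m}$ and $x_i^{p}=x_j^{q}=1$, all of which are finite, virtually free, or virtually surface groups, hence biautomatic by \cite{EPS92} --- and recording biautomatic structures on them compatible with the inclusions $G_Y\hookrightarrow G_{Y'}$; (2) determine which $Y\subseteq X$ are spherical, pinning down the cell structure of $\Sigma$; (3) establish a Gromov link condition on $\Sigma$ --- equivalently a flag-no-square condition if $\Sigma$ is cubical, or a systolic condition if it is simplicial --- so that $\Sigma$ is CAT(0) or systolic; (4) invoke the standard theorem that a group acting properly cocompactly on a CAT(0) cube complex or a systolic complex is biautomatic, with the biautomatic structure given by normal geodesics; (5) check two-sidedness, which in these settings comes for free since the metric is symmetric and the normal-form construction is equivariant on both sides. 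If step (3) resists, the fallback is to write down a ``shortlex with Garside blocks'' normal form directly from the Shephard presentation and prove the two-sided fellow-traveller property by a small-cancellation analysis of how the torsion relators $x_i^{m_{ii}}$ and the Artin relators $\br{x_i,x_j}{m_{ij}}=\br{x_j,x_i}{m_{ji}}$ can overlap in a reduced van Kampen diagram.

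The main obstacle is structural and, as the statement is worded, fatal to a full proof: setting $m_{ii}=\infty$ for every $i$ turns $G(X,\mathcal{C})$ into an arbitrary Artin group, so a proof of the conjecture in this generality contains a proof that every Artin group is biautomatic, which is open. Concretely, step (3) cannot be carried out in general --- the $K(\pi,1)$ (Deligne) conjecture is unproved for arbitrary Artin groups, it is unknown whether the Deligne complex is CAT(0), and no Garside structure, small-cancellation presentation, or automatic structure is available for a general Artin group. Hence the realistic program is to restrict to a regime in which step (3), or its small-cancellation surrogate in the fallback route, becomes tractable: most naturally the two-dimensional regime in which every spherical $G_Y$ has rank $\le 2$, so that $\Sigma$ is a $2$-complex and the overlaps of the relators can be analysed by hand. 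In that regime the genuinely new work --- and the step I expect to be hardest --- is the small-cancellation bookkeeping: controlling how the short torsion relators $x_i^{m_{ii}}$ interact with the long Artin relators when the triangle labels $m_{ij}$ are large, extracting from this a $C'(1/6)$-type or non-metric small-cancellation condition on the Shephard presentation, and deducing from it the two-sided fellow-traveller property that certifies biautomaticity.
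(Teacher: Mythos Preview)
The statement is a conjecture that the paper does not claim to prove; the paper establishes only the special case of large-triangles Shephard groups with finite edge-subgroups (Theorem~\ref{thm:mainApplication}). You correctly identify the structural obstruction to a full proof---the conjecture contains the open biautomaticity problem for Artin groups---so your proposal is, appropriately, a programme rather than a proof, and you say so.

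Your primary route (a Shephard--Davis complex carrying a CAT(0) or systolic metric) is not what the paper does; no such complex appears. The paper instead executes something close to your fallback route, but with a specific restriction and a specific mechanism. The restriction is \emph{large triangles} (no triangle in the Shephard graph carries an edge labelled $2$) together with \emph{finite edge-subgroups}; this sits inside your proposed two-dimensional regime but is narrower. The normal form is not shortlex-with-Garside-blocks but a \emph{Peifer order} (Definition~\ref{def:PeiferOrder}), and the small-cancellation analysis is not on the presentation itself but on \emph{derived diagrams}: neighbouring regions using the same two generators are glued into a single derived region, so each derived region is a van Kampen diagram over a finite two-generator Shephard group $E_{ij}$. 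An Appel--Schupp syllable-length condition for these $E_{ij}$ forces the derived diagram to be $V(6)$, and a structure theorem for $V(6)$ maps (Theorem~\ref{thm:diagStructCC}) then gives thinness and the two-sided fellow-traveller property.

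Thus the step you flag as hardest---controlling how the torsion relators $x_i^{m_{ii}}$ interact with the Artin relators---is handled not by extracting a $C'(1/6)$-type condition on the presentation, but by \emph{absorbing} the torsion relators into the derived regions and running the small-cancellation argument one level up, where finiteness of the edge-subgroups supplies a uniform bound $\kappa(G)$ on region size. That absorption trick is the concrete idea your proposal does not anticipate; your $C'(1/6)$ fallback would fail as stated because the torsion relators are short and overlap badly with the Artin relators at the level of the original presentation.
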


Each Shephard group $G=G(X,\mathcal{C})$ has a naturally associated graph which we term `Shephard graph'. The Shephard graph of $G(X,\mathcal{C})$ will be denote by $\Gamma(X,\mathcal{C})$. This is a labelled graph with labels both on the edges and on the vertices. The definition follows: the vertices are the generators of $G$, namely, $x_1,\ldots, x_n$, that are labeled by $m_{11},\ldots, m_{nn}$; there is an edge from $x_i$ to $x_j$ if and only if $m_{ij}\neq\infty$ and then this edge is labeled by $m_{ij}$. Notice that in contrast to the Coxeter graph, in which there is an edge $(x_i,x_j)$ if $m_{ij}\geq3$, here we have an edge if $m_{ij}<\infty$.

Our focus in this work is on a sub-class of Shephard groups in which we can apply tools from small cancellation theory. This class is distinguished by the following assumption on the structure of the Shephard graph.

\begin{definition}[Large Triangles Condition]
A Shephard group $G(X,\mathcal{C})$ has the \emph{large triangle condition} property if any closed simple path in the Shephard graph $\Gamma(X,\mathcal{C})$ of length three (i.e., a `triangle') contains no edge labeled by two. Such groups will be referred to as ``\emph{Large Triangles Shephard Groups}''.
\end{definition}

An \emph{edge subgroup} in a Shephard group $G(X,\mathcal{C})$ is a subgroup that is generated by two generators that are connected by an edge in the Shephard graph $\Gamma(X,\mathcal{C})$. Given two generators $x_i$ and $x_j$ that are connected in the Shephard graph we refer to the subgroup generated by them as the \emph{$(x_i,x_j)$-edge-subgroup}. These subgroups played an important role for Artin groups \cite{AS83}. In this work we will assume that all edge-subgroups are finite.

\begin{theorem}[Main Theorem]\label{thm:mainApplication}
A large triangles Shephard group where each edge-groups is finite is bi-automatic.
\end{theorem}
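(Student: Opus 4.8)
The plan is to realise $G=G(X,\mathcal{C})$ as a small cancellation quotient of a free product and then invoke the fact that such quotients inherit bi-automaticity. Grouping the relators $x_i^{m_{ii}}$ in the defining presentation exhibits $G$ as $F/\langle\!\langle \mathcal{R}\rangle\!\rangle$, where $F=\langle x_1\rangle * \cdots * \langle x_n\rangle$ is the free product of the finite cyclic groups $\mathbb{Z}/m_{ii}$ and $\mathcal{R}$ is the set of Artin relators $r_{ij}=\br{x_i,x_j}{m_{ij}}\,\br{x_j,x_i}{m_{ij}}^{-1}$, one for each edge $(x_i,x_j)$ of $\Gamma(X,\mathcal{C})$. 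The group $F$ is bi-automatic (it acts properly cocompactly on a tree, and free products of bi-automatic groups are bi-automatic), so it suffices to check that $\mathcal{R}$ satisfies a metric small cancellation condition over $F$ strong enough to force bi-automaticity of the quotient — I expect a $C(4)$--$T(4)$ condition measured in syllable length, witnessed geometrically by a combinatorial non-positive curvature structure on the relative Cayley $2$-complex.

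For the $C$ part one works with the normal form in $F$: every element is uniquely an alternating product of non-trivial syllables $x_i^{k}$, and each $r_{ij}$ is already cyclically reduced in this sense, its syllables alternating between powers of $x_i$ and powers of $x_j$. Two relators $r_{ij}$ and $r_{kl}$ are supported on edges meeting in at most one vertex of $\Gamma(X,\mathcal{C})$, so a common subword of $r_{ij}$ and $r_{kl}$ — or of $r_{ij}$ and a cyclic conjugate of $r_{ij}^{\pm1}$ — is forced to alternate around a single shared generator; a short bookkeeping argument then bounds the syllable length of a piece. Here the finiteness of the edge-subgroup $G_{ij}=\langle x_i,x_j\rangle$ is what pins down the exact shape of $r_{ij}$: it depends only on the triple $m_{ii},m_{jj},m_{ij}$, and finiteness restricts this triple to the rank-two entries of the Shephard--Todd list, so one can check case by case that $r_{ij}$ is not the product of fewer than four pieces.

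The heart of the argument is the $T$-condition, equivalently the girth-largeness of the links in the relative Cayley complex. A configuration of three relators closing up around an interior vertex can occur only when the three underlying edges form a closed simple path $x_i,x_j,x_k$ in $\Gamma(X,\mathcal{C})$ — a triangle — and I would show that the existence of a \emph{short} such configuration forces one of $m_{ij},m_{jk},m_{ik}$ to equal $2$; the Large Triangles hypothesis forbids exactly this, which is precisely what rules out interior vertices of degree three. One then concludes by appealing to the known bi-automaticity of $C(4)$--$T(4)$ (and $C(6)$) small cancellation groups, due to Gersten and Short, in a form adapted to quotients of a free product of finite groups. I expect the main obstacles to be: (i) making the relative small cancellation bookkeeping uniform over all possible finite edge-subgroups, in particular choosing presentations/normal forms that keep pieces short even when several edges share a common vertex — this may force a change of generating set in the spirit of Appel--Schupp's treatment of large-type Artin groups; (ii) handling edges of $\Gamma(X,\mathcal{C})$ lying in no triangle, where $m_{ij}$ may be $2$ and $r_{ij}$ is shortest, by verifying that such relators never enter the forbidden curvature configurations; and (iii) assembling the final bi-automatic structure, either by proving directly that the relative Cayley complex carries a combinatorial non-positive curvature structure (systolic, or $\mathrm{CAT}(0)$) and quoting the corresponding theorem, or by establishing the relative "small cancellation quotients of bi-automatic groups are bi-automatic" statement in the generality needed here.
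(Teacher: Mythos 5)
There is a genuine gap, and it sits at the heart of your plan: the claim that the Large Triangles hypothesis ``rules out interior vertices of degree three,'' i.e.\ gives $T(4)$. The large triangles condition does \emph{not} forbid triangles in the Shephard graph; it only forbids triangles containing an edge labeled $2$. A Shephard graph that is a single triangle with all edge labels $\geq 3$ (say $m_{ij}=m_{jk}=m_{ik}=3$, with finite vertex labels) satisfies the hypothesis, and there regions of types $ij$, $jk$, $ki$ can perfectly well close up around an interior vertex of valence three, so $T(4)$ fails and Gersten--Short for $C(4)$--$T(4)$ is not available. What the hypothesis actually buys is weaker and mixed: whenever such a triple point occurs, each of the three labels $m$ involved is at least $3$, so (via the syllable-length bound $\lp{W}\geq 2m_{ij}$ for words representing $1$ nontrivially in the edge group $E_{ij}$) the adjacent regions have at least six neighbours. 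That is a $V(6)$-type condition (at least four neighbours always, at least six if a valence-three vertex lies on the boundary), which is exactly the regime the paper works in, using Juh\'asz's $W(6)$ theory and a cut-corner/thinness structure theorem rather than classical $C(4)$--$T(4)$ machinery.

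A second, related problem is your $C$-condition bookkeeping over the free product $F=\ast\,\mathbb{Z}/m_{ii}$ with one Artin relator per edge. In reduced diagrams over such a presentation the relevant ``regions'' are not single relators: neighbouring cells over the same pair $\{x_i,x_j\}$ consolidate, and the effective boundary words are arbitrary relations of the finite edge group $E_{ij}$ (Artin relation \emph{together with} the torsion relations), for which a piece count on $r_{ij}$ alone says nothing. The quantitative input one actually needs is the Appel--Schupp syllable length condition for these finite two-generator Shephard groups, which the paper has to prove separately (via the Shephard--Todd classification, with case-by-case and partly computational verification), together with geodesic/length comparisons inside $E_{ij}$. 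Finally, even with a curvature condition in hand, bi-automaticity is not obtained in the paper by citing a black-box theorem: it constructs an explicit regular language (Peifer-minimal words for a lexicographic order on Peifer vectors) and verifies regularity and the fellow-traveller property through $\Omega$-minimal derived diagrams, cut corners, and thin equality diagrams. So your proposal would need to be substantially reworked: replace $T(4)$ by the $V(6)$-type statement, replace the single-relator piece analysis by the syllable-length condition for the finite $E_{ij}$, and either prove the ``small cancellation quotient of a free product of finite groups is bi-automatic'' statement you invoke or build the language directly as the paper does.
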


Each $(x_i,x_j)$-edge-subgroup is an homomorphic image of a group with the following presentation
\[
\Pres{x_i,x_j|x_i^{m_{ii}}=x_j^{m_{jj}}=1, \br{x_i,x_j}{m_{ij}} =
\br{x_j,x_i}{m_{ji}} }
\]
We conclude the introduction by pointing out that by classification of Shephard and Todd \cite{ST54} the group with the presentation given above is finite when either $m_{ij}\geq3$ and
\[
\frac{1}{m_{ii}}+\frac{1}{m_{jj}}+\frac{2}{m_{ij}} > 1
\]
or when $m_{ij}=2$ and both $m_{ii}$ and $m_{jj}$ are finite. This gives a sufficient (easily checked) condition in which the requirements of Theorem \ref{thm:mainApplication} are satisfied.

The rest of the paper is organized as follows. In Section \ref{sec:perliminaries} we give the basic notations and definitions. In Section \ref{sec:biAutoStruct} we describe a bi-automatic structure for the groups considered in this work. In Section \ref{vanKampenDiag} and Section \ref{sec:derDiagAdmiDiag} we introduce the notions of van Kampen diagrams, derived diagrams, and $V(6)$ diagram. In Section \ref{sec:structV6} an important property of $V(6)$ diagrams is given (Theorem \ref{thm:diagStructCC}) and we describe a construction that allows us to apply it for derived diagrams. Finally, in Section \ref{proofBiAuto} we give the proof of the main theorem. An appendix is included which contains some technical results on finite Shephard groups on two generators that are needed throughout the sections.

This work is part of the author's Ph.D. research conducted under the supervision of Professor Arye Juh\a'{a}sz.

\section{Preliminaries} \label{sec:perliminaries}

\begin{notation}\label{not:standardNot}
Let $\Pres{X|\R}$ be a finite presentation for a group $G$. Denote by $\ws$ the set of all finite words with letters in $\pmX=X\cup X^{-1}$. The elements of $\ws$ are not necessarily freely-reduced. Let $W$ and $U$ be words in $\ws$. We use the following notations:
\begin{enumerate}
 \item $\overline{W}$ denotes the element in $G$ which $W$ presents. The projection map $\pi:\ws\to G$ which sends $W$ to $\overline{W}$ is called the \emph{natural map}. We will say that $W=U$ in $G$ if $\overline{W} = \overline{U}$.


 \item $|W|$ is the length of $W$ (i.e., the number of letters in $w$).

 \item $W$ is called \emph{geodesic} in $G$ if for every $U\in\ws$ such that $\overline{W}=\overline{U}$ we have $|W|\leq|U|$.

 \item $W(n)$ is the prefix of $W$ consisting of the first $n$ letters of $W$. If $n>|W|$ then $W(n)=W$.

 \item The \emph{symmetric closure} of $\R$ is the finite subset of $\ws$ which consists of all cyclic conjugates of elements of $\R$ and their inverses.


 \item $\lp{W}$ denotes the syllable length of $W$. Namely, if $W=W_1 W_2 \cdots W_k$ where $W_j \in \mathcal{W}(x_{i_j})$ and $x_{i_j}\neq x_{i_{j+1}}$ ($j=1,2,\ldots,k-1$) then $\lp{W}=k$. Each $W_j$ is called an $x_{i_j}$-syllable of $W$. We do not assume here that $W$ nor its components $W_j$ are freely-reduced.

 \item \label{not:llsNum} Let $W=W_1 W_2 \cdots W_k \in \ws$ where $W_j \in \mathcal{W}(x_{i_j})$ and $x_{i_j}\neq x_{i_{j+1}}$. Denote by $\lls{x}{W}$ the length of the longest $x$-syllable in $W$. Namely, for $x\in X$, $\lls{x}{W} = \max\Set{\len{W_j} | W_j \in \mathcal{W}(x)}$ (we define the maximum of an empty set to be zero).

 \item \label{not:nonTriv} We say that $W\in\ws$ is equal to $1$ \emph{non-trivially} if $W=1$ in $G$ but $W$ is not freely-reducible to $1$.
\end{enumerate}
\end{notation}


\begin{definition}[Cayley Graph and the associated metric]
The Cayley graph of a group $G$ with generating set $X$ is the graph whose vertex set is $G$ and there is a directed edge from $g_1$ to $g_2$ labeled by $x\in\pmX$ if and only if $g_1x=g_2$. We denote this graph by $Cay(G,X)$. The metric $d(\cdot,\cdot)=d_X(\cdot,\cdot)$ in $Cay(G,X)$ is the standard non-directed \emph{path metric} (also called the \emph{word-metric} of $G$). Namely, $d(g_1,g_2)$ is the edge length of the shortest path from $g_1$ to $g_2$. Each word $W=x_1x_2\cdots x_k$ in $\ws$ corresponds naturally to a path in $Cay(G,X)$ whose vertices are $\overline{x_1}, \overline{x_1x_2}, \overline{x_1x_2x_3}, \ldots, \overline{W}$. For two words $W,U\in\ws$ we denote by $d(W,U)$ the distance between $\overline{W}$ to $\overline{U}$ in $Cay(G,X)$.
\end{definition}

\begin{definition}[Fellow Travellers \cite{EPS92}]
Let $k$ be a positive number. Two words $W,U\in \ws$ are called \emph{$k$-fellow-travelers} if for all $\ell\in\mathbb{N}$ we have:
\[
d_X\left(W(\ell),U(\ell)\right)\leq k
\]
Suppose $L\subseteq\ws$. $L$ has the \emph{fellow-travelers property} if there is a constant $k$ for which the following condition holds: if $W$ and $U$ are elements of $L$ and $x,y\in\pmX\cup\Set{\varepsilon}$ such that $xW=Uy$ in $G$ then $xW$ and $Uy$ are $k$-fellow-travelers.
\end{definition}

\begin{definition}[Bi-automatic structure and Bi-automatic group \cite{EPS92}]
A \emph{bi-automatic structure} of $G$ with generating set $X$ is a regular subset of $\ws$ which maps onto $G$ under the natural map and which has the fellow-traveler property. A group is \emph{bi-automatic} if it has a bi-automatic structure.
\end{definition}

The following observation will be useful for checking fellow-traveling properties.

\begin{observation} \label{obs:CheckingFeloTrvlProp}
Let $G$ be a group finitely generated by $X$ and let $W$ and $U$ be two elements of $\ws$. Suppose $W$ decomposes as $W=W_1 W_2 W_3$ and $U = W_2$ in $G$. Suppose further that $V$ is formed by replacing $W_2$ with $U$ in $W$, i.e., $V=W_1 U W_3$. In this case, $W$ and $V$ are $(|W_2|+|U|)$-fellow-travelers.
\end{observation}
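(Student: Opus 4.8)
The plan is to fix an arbitrary $\ell\in\mathbb{N}$ and bound $d_X(W(\ell),V(\ell))$ directly, splitting on how far the length-$\ell$ prefix has progressed through the ``swapped'' middle block (which is $W_2$ inside $W$ and $U$ inside $V$). Write $a=|W_1|$. First I would record the two elementary facts used throughout: (i) for a single word $Y$ and indices $m,m'\ge 0$, the relevant subpath of the $Y$-path in $\Cay{G,X}$ joins $\overline{Y(m)}$ to $\overline{Y(m')}$ and has length at most $|m-m'|$, so $d_X(Y(m),Y(m'))\le|m-m'|$; and (ii) since $\overline{W_2}=\overline{U}$ in $G$, for every word $Z$ one has $\overline{W_1W_2Z}=\overline{W_1UZ}$.

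The crucial step is a re-indexing identity. When $\ell\ge a+|W_2|$ the prefix $W(\ell)$ is $W_1W_2W_3(\ell-a-|W_2|)$, so by (ii) its image in $G$ equals the image of $V(\ell+|U|-|W_2|)$; symmetrically, when $\ell\ge a+|U|$ one gets $\overline{V(\ell)}=\overline{W(\ell+|W_2|-|U|)}$. Feeding either identity into (i) yields $d_X(W(\ell),V(\ell))\le\bigl||W_2|-|U|\bigr|\le|W_2|+|U|$, and since $\ell\ge a+\min(|W_2|,|U|)$ forces one of the two hypotheses $\ell\ge a+|W_2|$, $\ell\ge a+|U|$, this disposes of every $\ell$ with $\ell\ge a+\min(|W_2|,|U|)$. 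For the remaining range $\ell< a+\min(|W_2|,|U|)$, the prefixes $W(\ell)$ and $V(\ell)$ agree outright when $\ell\le a$ (distance $0$), and for $a<\ell<a+\min(|W_2|,|U|)$ I would connect each of $\overline{W(\ell)}$ and $\overline{V(\ell)}$ back to the common vertex $\overline{W_1}$ along its own middle block, at cost $\ell-a<|W_2|$ and $\ell-a<|U|$ respectively; the triangle inequality in $\Cay{G,X}$ then gives $d_X(W(\ell),V(\ell))<|W_2|+|U|$. Assembling the cases gives the bound for all $\ell$, which is exactly the $(|W_2|+|U|)$-fellow-traveler property by definition.

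I expect the only delicate point --- more a matter of careful bookkeeping than of genuine difficulty --- to be verifying the re-indexing identity against the truncation convention $Y(n)=Y$ for $n>|Y|$: one must check that $W_1W_2W_3(\ell-a-|W_2|)$ and $V(\ell+|U|-|W_2|)$ are literally the same word (they take the same number of letters from $W_3$, including in the boundary case where the index overshoots $|W_3|$), and that the shifted index $\ell+|U|-|W_2|$ is nonnegative so that the prefix is well defined. Apart from that, the whole argument is the triangle inequality in $\Cay{G,X}$ together with the single relation $\overline{W_2}=\overline{U}$.
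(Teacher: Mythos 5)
Your proof is correct; the paper offers no argument for this observation (it is treated as immediate), and your verification --- identical prefixes while $\ell\leq|W_1|$, the triangle inequality through the common vertex $\overline{W_1}$ inside the swapped block, and the re-indexing identity $\overline{W(\ell)}=\overline{V(\ell+|U|-|W_2|)}$ (or its mirror) once $\ell$ passes $|W_1|+\min(|W_2|,|U|)$ --- is exactly the routine check the paper leaves to the reader. Your attention to the truncation convention $Y(n)=Y$ for $n>|Y|$ and to the nonnegativity of the shifted index is handled correctly, so there is no gap.
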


\section{Bi-automatic structure for Shephard group} \label{sec:biAutoStruct}

Let $G=G(X,\mathcal{C})$ be a large triangles Shephard group with finite edge-subgroups. We next define an order ``$\prec_p$'' on $\ws$. The order is defined by attaching to each $W\in\ws$ an integer vector and then using it to compare between the different words.

\begin{definition}[Peifer vector for $G(X,\mathcal{C})$]\label{def:PeiferVector}
First we define three subsets of $\ws$. $A$ is the finite subset
\[
A=\Set{x^{\eps_1} x^{\eps_2} | x\in \pmX;\; \eps_1,\eps_2
\in \Set{-1,+1}}
\]
Suppose $m_{ij}\neq\infty$. Then, we define the set $B_{ij}^{(k)}$ by
\[
B_{ij}^{(k)} = \Set{W \in \mathcal{W}(x_i,x_j) |
\lp{W}\geq k}
\]
If $m_{ij}=\infty$ then $B_{ij}^{(k)}=\phi$ (the empty set). The set $B^{(k)}$ is the union of all the sets $B_{ij}^{(k)}$ over all $i$ and $j$. Suppose $|W|=n$. The vector $\lambda_W$ is the vector $(a_1,a_2,\ldots,a_n)$ in $\Set{0,1,2,3}^n$ where
\[
a_i = \left\{\begin{array}{ll}
  0 & \textrm{if $W(i)$ has a suffix in $A$}; \\
  1 & \textrm{if $W(i)$ has a suffix in $B^{(3)}$ but no suffix in $A$}; \\
  2 & \textrm{if $W(i)$ has a suffix in $B^{(2)}$ but no suffix in $B^{(3)}\cup A$}; \\
  3 & \textrm{otherwise.} \\
\end{array}
\right.
\]
In words, $a_i$ is zero if the prefix $W(i)$ ends with two occurrences of the same generator; $a_i$ is one if $W(i)$ ends with at least three alternating syllables of two generators which are connected in the Shephard graph; $a_i$ is two if $W(i)$ ends with exactly two syllables of two generators which are connected in the Shephard graph; $a_i$ is three in any other case.
\end{definition}

For completeness we next give the definition of lexicographical order, which is used to compare between the Peifer vectors.

\begin{definition}[Lexicographical order]
Let $v_1=(a_1,\ldots,a_k)$ and $v_2=(b_1,\ldots,b_\ell)$ be two vectors with entries in $\mathbb{N}$. We say that $v_1$ precedes $v_2$ in \emph{lexicographical order} if either $k<l$ or $k=l$ and the following holds: there is an index $1\leq i \leq k$ such that for all indexes $1\leq j < i$ there is an equality $a_j=b_j$ and for the index $i$ we have $a_i<b_i$.
\end{definition}

\begin{definition}[Peifer order on $\ws$] \label{def:PeiferOrder}
Let $W,U\in\ws$. We say that $W\prec_p U$ if one of the following conditions hold:
\begin{enumerate}
 \item $|W|<|U|$.
 \item $|W|=|U|$ and either $\lambda_W=\lambda_U$ or $\lambda_W$ precedes $\lambda_U$ in lexicographical order.
\end{enumerate}
\end{definition}

We say that $W\in\ws$ is ``$\prec_p$''-minimal (read: Peifer minimal) if for every other $U\in\ws$ such that $W=U$ in $G$ we have that $W\prec_p U$. 

\begin{remark} \label{rem:freeRedReducePeiferOrd}
Suppose that $W$ is not freely-reduced and $U$ is derived from $W$ by a single free reduction (i.e., $U=U_1U_2$ and $W=U_1 x x^{-1} U_2$). In this case we have that $U = W$ in $G$ and that $|U|<|W|$ which implies that $U\prec_p W$. In addition we also have that $U$ and $W$ are $2$-fellow-travelers (Observation \ref{obs:CheckingFeloTrvlProp}).
\end{remark}

Denote by $L_G$ the set of ``$\prec_p$''-minimal elements of $\ws$. To prove the main theorem (Theorem \ref{thm:mainApplication}) we need to give a bi-automatic structure for the Shephard group $G$. The bi-automatic structure is given in next proposition.

\begin{proposition} \label{prop:LGisBiAutoStruct}
$L_G$ is a bi-automatic structure for $G$.
\end{proposition}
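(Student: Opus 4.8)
The plan is to verify the three defining properties of a bi-automatic structure for $L_G$: (i) the natural map $\pi$ restricted to $L_G$ is onto $G$; (ii) $L_G$ is a regular language; and (iii) $L_G$ has the fellow-traveler property. Surjectivity is essentially free: the Peifer order $\prec_p$ is well-founded on the words representing any fixed element $g\in G$ (the length component is a non-negative integer and, among words of a fixed length, the Peifer vectors lie in the finite set $\Set{0,1,2,3}^n$, so each $\pi^{-1}(g)$ has a $\prec_p$-minimal element), hence every $g$ is represented by at least one element of $L_G$. In fact by Remark~\ref{rem:freeRedReducePeiferOrd} a Peifer-minimal word is freely reduced, and more importantly it must be geodesic, since a shorter representative would precede it; this observation will be used repeatedly.

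For regularity I would build a finite-state automaton that reads a word $W$ letter by letter and accepts exactly the Peifer-minimal words. The key point is that being Peifer-minimal is a \emph{local} condition once one knows the finiteness of the edge-subgroups: a word fails to be $\prec_p$-minimal precisely when it contains a subword $W_2$ that can be replaced by some $U$ with $\overline{U}=\overline{W_2}$ and with $(|U|,\lambda_U)$ strictly smaller in the relevant sense, and — this is the crux — such a ``witnessing'' subword can always be taken to lie inside a single edge-subgroup (or to be a length-two free-reduction or cancellation pattern from $A$). Because each edge-subgroup is finite, there are only finitely many edge-subgroup elements and finitely many words over two generators that are Peifer-minimal \emph{within} that subgroup, and these bounded-length forbidden patterns, together with the forbidden patterns coming from $A$ and from the transitions that change the $a_i$-coordinates, can be recognized by a finite automaton. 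Here is where the large-triangle hypothesis enters: it guarantees that a syllable-alternating subword over $\Set{x_i,x_j}$ cannot be shortened by routing through a third generator $x_k$ adjacent to both with $m_{ik}=2$ or $m_{jk}=2$, so that ``Peifer-minimal'' really is detectable by inspecting edge-subgroups one at a time. The technical input for this is exactly the appendix material on finite two-generator Shephard groups.

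For the fellow-traveler property, suppose $W,U\in L_G$ and $xW = Uy$ in $G$ with $x,y\in\pmX\cup\Set{\eps}$. I would show $xW$ and $Uy$ stay within bounded distance by a ``local-to-global'' argument: both are geodesic-up-to-a-bounded-additive-error (being Peifer-minimal, close to geodesic), and since they represent nearly the same element, any van~Kampen diagram for the word $xW(Uy)^{-1}$ must be thin. The role of the Peifer order is that it is a \emph{shortlex-like} order, engineered so that minimal words over an edge-subgroup behave like geodesics in a chosen bi-automatic structure on that finite subgroup (finite groups are trivially bi-automatic), and the vector $\lambda_W$ records enough ``syllable history'' that two Peifer-minimal words for the same element must agree syllable-by-syllable up to bounded local modifications — invoking Observation~\ref{obs:CheckingFeloTrvlProp} to convert each such local modification into a bounded fellow-traveler constant. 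The honest answer, though, is that the full force of the fellow-traveler verification will not be carried out here by soft arguments; it is deferred to the small-cancellation/van~Kampen-diagram machinery ($V(6)$ diagrams and Theorem~\ref{thm:diagStructCC}) developed in Sections~\ref{vanKampenDiag}--\ref{sec:structV6}, and the proof of this proposition is completed in Section~\ref{proofBiAuto}.

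I expect the main obstacle to be precisely step (iii), the fellow-traveler property — and within it, showing that the ``syllable structure'' encoded by the Peifer vector is stable under multiplication by a single generator on either side. Regularity is delicate but ultimately a finite bookkeeping problem controlled by the finiteness of the edge-subgroups; surjectivity is immediate from well-foundedness. But the fellow-traveler estimate is genuinely global: it requires controlling how a Peifer-minimal normal form can change when the target element is perturbed, and this is where one cannot avoid the diagram-theoretic analysis. Consequently the structure of the argument is: prove (i) and the skeleton of (ii) here, reduce (ii) to the appendix facts about two-generator subgroups, and reduce (iii) to the $V(6)$-diagram structure theorem, assembling everything in Section~\ref{proofBiAuto}.
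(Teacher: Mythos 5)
Your surjectivity argument and your decision to defer the fellow-traveler property to the $V(6)$/thin-diagram machinery both match the paper (which uses Lemma \ref{lem:husdDist} together with Proposition \ref{prop:CCtoReduce} and Theorem \ref{thm:diagStructCC} for step (iii)). The genuine gap is in your treatment of regularity. You claim that failure of Peifer-minimality is a \emph{local} condition: that a non-minimal word always contains a bounded-length ``witnessing'' subword lying inside a single edge-subgroup (or a pattern from $A$), so that $L_G$ is cut out by finitely many forbidden patterns recognizable by a finite automaton. This is unjustified, and it is not how minimality behaves in these groups: a word $W$ can be Peifer-reducible while no subword of $W$ lying in some $\mathcal{W}(x_i,x_j)$ equals a shorter or Peifer-smaller word in $E_{ij}$ --- the reduction may only be visible across a long strip of derived regions of \emph{different} $ij$-types. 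This is precisely the thin-diagram case that arises in Section \ref{proofBiAuto}: there the Peifer-smaller word $W'$ is produced by Lemma \ref{lem:thinDiagForKFT} from the whole $(\rho,\delta)$-thin strip, not by replacing a single edge-subgroup subword. So the automaton you describe would accept words that are not $\prec_p$-minimal, and the appendix facts about two-generator subgroups cannot repair this on their own.

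The paper's actual route to regularity is Lemma \ref{lem:falseKFT} (Peifer's Lemma 29): since $\prec_p$ is a regular order with minimal elements, it suffices to show that every non-minimal $W$ admits $V$ with $V\prec_p W$, $V=W$ in $G$, and $V,W$ $k$-fellow-travelers for a uniform $k$ (here $k$ on the order of $\kappa(G)$). Establishing that hypothesis is exactly where the $\Omega$-minimal derived diagrams, the cut-corner analysis (Proposition \ref{prop:CCtoReduce}), the thinness theorem (Theorem \ref{thm:diagStructCC}), and Lemma \ref{lem:thinDiagForKFT} enter; in other words, the same diagram machinery you defer ``only'' for the fellow-traveler property is equally indispensable for regularity. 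What can be checked directly is only that the order $\prec_p$ itself is regular (because $A$, $B^{(2)}$, $B^{(3)}$ are regular languages), not that its set of minimal words is. So your division of labor --- surjectivity immediate, regularity a finite bookkeeping problem, only fellow-traveling global --- mislocates the difficulty, and as written the regularity step does not go through.
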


Clearly, Proposition \ref{prop:LGisBiAutoStruct} establishes the main theorem (Theorem \ref{thm:mainApplication}). The rest of the work is devoted for proving this proposition. For that end we need to show that $L_G$ has the three properties needed to be a bi-automatic structure (i.e., regularity, onto, and fellow-traveler). By the definition of ``$\prec_p$'' it follows that any subset of $\ws$ contains a (Peifer) minimal element. Thus, $L_G$ is not empty and is onto $G$ through the natural map. To show that $L_G$ is regular we will use Lemma 29 of \cite{Pei96}:

\begin{lemma} \label{lem:falseKFT}
\cite[Lemma 29]{Pei96}. Let $G$ be finitely generated by $X$ and let ``$\prec$" be a regular order on $\ws$ that has minimal element for every non-empty subset of $\ws$. Assume there is a positive constant $k$, such that for every word $W$ that is not ``$\prec$"-minimal there is a word $V$ with the following properties:
\begin{enumerate}
 \item $V\prec W$.
 \item $V=W$ in $G$.
 \item $V$ and $W$ are $k$-fellow-travelers.
\end{enumerate}
Then, the set of ``$\prec$"-minimal words is a regular set.
\end{lemma}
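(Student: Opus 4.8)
The plan is to express the \emph{a priori} unbounded condition ``$W$ is not $\prec$-minimal'' --- which as it stands quantifies over all of $\ws$ --- by finite automata, the three hypotheses on the auxiliary word $V$ being exactly what makes this possible. Concretely, I will exhibit the set $N$ of non-$\prec$-minimal words as the image, under projection to the second coordinate, of an intersection of two regular synchronous two-tape relations over the alphabet $\pmX$ (with an extra padding symbol $\$$): the relation $D_k$ of pairs of $k$-fellow-travelers that represent the same element of $G$, and (the strict part of) the order $\prec$. Since regular two-tape relations are closed under intersection and under coordinate projection, and regular languages under complement, it follows that the set of $\prec$-minimal words, $\ws\setminus N$, is regular.

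\textbf{Step 1: the fellow-traveler machine.} Let
\[
D_k=\Set{(W,U)\in\ws\times\ws | \overline{W}=\overline{U}\text{ and }d_X\bigl(W(\ell),U(\ell)\bigr)\le k\ \text{for all}\ \ell\in\mathbb{N}}.
\]
I claim $D_k$ is regular. This is the standard ``word-difference automaton'' of the theory of automatic groups. Its states are the elements of the finite ball $\Set{g\in G | d_X(1,g)\le k}$ --- finite because $X$ is finite --- together with a sink state $\mathsf{fail}$; the initial state is the identity of $G$; on reading a pair $(x,y)$, each coordinate being a letter of $\pmX$ or the padding symbol $\$$ (interpreted as the identity), the state $g$ passes to $\overline{x}^{-1}g\,\overline{y}$ if this element still lies in the ball and to $\mathsf{fail}$ otherwise; the unique accepting state is the identity. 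Interpreting $g$ as the running ``difference'' $\overline{W(\ell)}^{-1}\overline{U(\ell)}$, whose word length is exactly $d_X(W(\ell),U(\ell))$, one checks that the machine reaches $\mathsf{fail}$ precisely when this distance first exceeds $k$, and ends at the identity precisely when the fellow-traveler inequality held at every $\ell$ and $\overline{W}=\overline{U}$.

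\textbf{Step 2: the order relation and the characterization.} By hypothesis $\prec$ is a \emph{regular} order, so $R=\Set{(V,W)|V\prec W}$ is a regular two-tape relation; hence so is its strict part $S=\Set{(V,W)|V\prec W\text{ and }W\not\prec V}$, regular relations being closed under inverse, complement and intersection (for a genuine partial order, $S=R$ minus the diagonal). I claim that for every $W\in\ws$,
\[
W\text{ is not }\prec\text{-minimal}\quad\Longleftrightarrow\quad\exists\,V\in\ws:\ (V,W)\in D_k\cap S.
\]
``$\Leftarrow$'' is immediate: such a $V$ has $\overline{V}=\overline{W}$, $V\ne W$ and $W\not\prec V$, so $W$ is not $\prec$-minimal. ``$\Rightarrow$'' is exactly the fellow-traveler hypothesis: if $W$ is not $\prec$-minimal, it furnishes $V$ with $V\prec W$ (hence, being a genuine improvement, $W\not\prec V$), $\overline{V}=\overline{W}$, and $V,W$ being $k$-fellow-travelers, i.e.\ $(V,W)\in D_k\cap S$. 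This is the step where conditions (1)--(3) do the real work: without the uniform bound $k$ in (3), non-minimality of $W$ would force a search over arbitrarily distant competitors $V$, which need not be a regular condition.

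\textbf{Step 3: projection and conclusion.} Thus $N$ equals the projection of the regular relation $D_k\cap S$ onto its second coordinate, and this is again regular: an NFA scanning $W$ letter by letter simulates the two-tape automaton recognising $D_k\cap S$ while nondeterministically guessing at each step the matching letter of $V$ --- or the symbol $\$$ once $V$ has been exhausted --- and, once $W$ is read, continues by $\varepsilon$-moves to guess a trailing run of pairs $(\,\cdot\,,\$)$ covering the case $|V|>|W|$; this NFA accepts exactly $N$. Hence $N$ is regular, and so is $\ws\setminus N$, the set of $\prec$-minimal words. The genuinely technical points are the verification of the word-difference automaton in Step~1 and the padding bookkeeping in the projection --- both routine in this setting but easy to mishandle --- whereas the conceptual crux is recognising that conditions (1)--(3) are tailored so that a witness to non-minimality can always be found inside the \emph{regular} relation $D_k$, turning an unbounded existential quantifier into a finite-state condition. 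The hypothesis that every non-empty subset of $\ws$ has a $\prec$-minimal element is not needed for this regularity statement; it guarantees that the set of minimal words is non-empty and surjects onto $G$, which is what matters in the surrounding argument.
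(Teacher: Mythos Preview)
The paper does not give its own proof of this lemma; it simply quotes it from \cite[Lemma~29]{Pei96}. Your argument---expressing non-minimality as the projection of the regular two-tape relation $D_k\cap S$ built from the word-difference automaton and the strict order---is correct and is the standard ``falsification by fellow-traveler'' proof, which is essentially the argument Peifer gives.
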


Clearly, ``$\prec_p$'' is regular (because the subsets $A$, $B_2$, and $B_3$ are regular; see \cite{Pei96} for the meaning of regular order) and has a minimal element for every non-empty subset of $\ws$. Thus, we will show in the sequel that the rest of the conditions of Lemma \ref{lem:falseKFT} hold. Having done so we will have to show that $L_G$ has the fellow-traveler property. This will be done using the following lemma from \cite{EPS92} (given here with minor adaptation):

\begin{lemma} \label{lem:husdDist}
\cite[Lemma 3.2.3]{EPS92}. Let $G$ be finitely presented by $\Pres{X|\R}$, let $W,U\in\ws$ be two geodesics, and let $x,y\in\pmX\cup\Set{\eps}$. Assume there is a positive constant $s\in\mathbb{N}$ such that for every prefix $P_1$ of $xW$ there is a prefix $P_2$ of $Uy$ and $V\in\ws$ with $\len{V}\leq s$ for which $\overline{P_1 V}=\overline{P_2}$. Assume further that the same holds when the roles of $xW$ and $Uy$ are exchanged. Then, $xW$ and $Uy$ are $(2s+1)$-fellow-travelers.
\end{lemma}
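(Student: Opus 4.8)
The plan is to verify the $(2s+1)$-fellow-traveler inequality for one value of $\ell$ at a time: fix $\ell\in\mathbb{N}$ and bound $d_X\!\left(\overline{(xW)(\ell)},\overline{(Uy)(\ell)}\right)$ by $2s+1$. First note that the hypothesis $\overline{P_1V}=\overline{P_2}$ with $\len{V}\le s$ says precisely that $d_X(\overline{P_1},\overline{P_2})=d_X(1,\overline{V})\le s$. In the principal range, where $\ell\le\len{xW}$ and $\ell\le\len{Uy}$ so that $(xW)(\ell)$ and $(Uy)(\ell)$ are genuine prefixes, apply the hypothesis to $P_1=(xW)(\ell)$ to obtain a prefix $P_2=(Uy)(m)$ of $Uy$ with $d_X(\overline{P_1},\overline{P_2})\le s$, and then
\[
d_X\!\left(\overline{(xW)(\ell)},\overline{(Uy)(\ell)}\right)\le d_X\!\left(\overline{P_1},\overline{P_2}\right)+d_X\!\left(\overline{(Uy)(m)},\overline{(Uy)(\ell)}\right)\le s+d_X\!\left(\overline{(Uy)(m)},\overline{(Uy)(\ell)}\right).
\]
Since $(Uy)(m)$ and $(Uy)(\ell)$ are both vertices on the single path in $\Cay{G,X}$ spelled out by $Uy$, the last summand is at most $\len{m-\ell}$, so the whole statement reduces to the \emph{length-discrepancy estimate} $\len{m-\ell}\le s+1$.

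This estimate is where the hypothesis that $W$ and $U$ are geodesics does the work. A subword of a geodesic word is again geodesic, hence $d_X(1,\overline{W(j)})=j$ and $d_X(1,\overline{U(j)})=j$ for all $j$. It follows that $d_X(1,\overline{(xW)(\ell)})$ differs from the word-length $\ell$ by a bounded amount, the only slack coming from a possible cancellation of the leading letter $x$ (and there is none when $x=\eps$), and likewise that $d_X(1,\overline{(Uy)(m)})$ stays within a bounded amount of $m$. Feeding this into the triangle inequality $\len{\,d_X(1,\overline{P_1})-d_X(1,\overline{P_2})\,}\le d_X(\overline{P_1},\overline{P_2})\le s$ bounds $\len{m-\ell}$ by $s$ plus a small correction: in the principal case $x=y=\eps$ one gets $\len{m-\ell}\le s$ at once, and a careful accounting of the at-most-one contributions of $x$ and of $y$ gives the uniform bound $\len{m-\ell}\le s+1$, hence $d_X\!\left(\overline{(xW)(\ell)},\overline{(Uy)(\ell)}\right)\le 2s+1$ throughout the principal range.

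It remains to deal with the values of $\ell$ for which one of the two prefixes has run off the end of its word. If $\ell\ge\len{xW}$ and $\ell\ge\len{Uy}$, then $(xW)(\ell)=xW$ and $(Uy)(\ell)=Uy$ and the required inequality is just the hypothesis applied to $P_1=xW$. If, say, $(xW)(\ell)=xW$ while $(Uy)(\ell)$ is still a genuine prefix, then the first hypothesis no longer controls $m$ against $\ell$, and one instead invokes the symmetric hypothesis on the prefix $(Uy)(\ell)$: it supplies a prefix $(xW)(\ell'')$ with $d_X(\overline{(Uy)(\ell)},\overline{(xW)(\ell'')})\le s$, after which the same geodesy estimate — now using that $(Uy)(\ell)$ sits at distance essentially $\ell$ from $1$ — bounds $\len{xW}-\ell''$ by about $s$ and closes this case; the remaining tail is symmetric. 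Thus both directions of the assumption are genuinely needed, each controlling one end. I expect the only real obstacle to be the precise form of the length-discrepancy estimate — arranging the interaction of the appended letters $x,y$ with the $\pm1$ slack so that the final constant comes out to exactly $2s+1$; beyond that the argument is just the triangle inequality together with the stability of geodesity under passing to subwords.
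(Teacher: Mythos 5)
The paper itself gives no proof of this lemma (it is imported from \cite[Lemma 3.2.3]{EPS92} ``with minor adaptation''), so your argument has to stand on its own. Its skeleton is indeed the natural one: reduce everything to a length-discrepancy bound $\len{m-\ell}$ via geodesity of $W$ and $U$ plus the triangle inequality, and use the symmetric half of the hypothesis for the overrun cases. The gap is in the one step that actually produces the constant: the assertion that $x$ and $y$ contribute ``at most one'' each, so that $\len{m-\ell}\leq s+1$. Each appended letter can in fact create a discrepancy of \emph{two} between the word length of a prefix and its distance to the identity, because the letter adds $1$ to the length and can simultaneously cancel: if $W$ begins with $x^{-1}$ then $d_X\bigl(1,\overline{(xW)(\ell)}\bigr)=\ell-2$, and likewise $d_X(1,\overline{Uy})$ can equal $\len{Uy}-2$ when $y$ cancels against the last letter of $U$ --- which is exactly the situation in which the hypothesis may hand you the full word $Uy$ as $P_2$. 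Feeding these bounds into your chain $\len{m-\ell}\leq\len{m-d_X(1,\overline{P_2})}+s+\len{d_X(1,\overline{P_1})-\ell}$ gives only $\len{m-\ell}\leq s+2$, hence the final estimate $2s+2$, not $2s+1$; and since the hypothesis only asserts the \emph{existence} of some suitable prefix $P_2$, you are not free to pick a better $m$. So the claimed ``careful accounting'' is precisely what is missing, and as stated it is false for the prefix the hypothesis supplies (take $x=\eps$ and $P_2=Uy$ with cancelling $y$: then $m=\ell+s+2$ is consistent with all your inequalities).

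To actually reach $2s+1$ you need a finer case analysis than the one sketched: for instance, when the forward hypothesis returns the full word $Uy$ with cancelling $y$, compare with the prefix $U$ instead, or switch to the reverse hypothesis applied to $(Uy)(\ell)$ (whose distance to $1$ is exactly $\ell$ when $\ell\leq\len{U}$), and observe that the two possible $+2$ slacks never penalize the same direction of the comparison --- none of which appears in your write-up. Two smaller points in the same spirit: the case $\ell\geq\len{xW}$, $\ell\geq\len{Uy}$ is not ``just the hypothesis'', since the hypothesis only gives a prefix of $Uy$ within $s$ of $\overline{xW}$, so the same discrepancy estimate is needed there; and if one is content with \emph{some} uniform constant (which is all the paper's application of the lemma, with $s=\kappa(G)$, really requires), your argument does deliver $2s+2$, but it does not prove the lemma with the stated constant.
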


In the terminology of \cite{EPS92}, the words $xW$ and $Uy$ in Lemma \ref{lem:husdDist} are of $s$-Hausdorff distance (see \cite{EPS92}). We will show that if $W,U\in L_G$ and there are $x,y\in\pmX\cup\Set{\eps}$ such that $xW=Uy$ in $G$ then $xW$ and $Uy$ are of $s$-Hausdorff distance for some fixed $s$ (which depends only on $G$). Since the elements of $L_G$ are geodesics, this would establish that $L_G$ has the fellow-traveler property.

\section{van Kampen Diagrams} \label{vanKampenDiag}

For establishing fellow-traveler properties we need to estimate distances in the Cayley graph. For that, we use one of the basic tools of small cancellation theory, namely, van Kampen diagrams. We next give the usual definitions and notations taken mainly from \cite[Chapter V]{LS77}.

\begin{definition}
A \emph{map} $M$ is a finite planar connected 2-complex (see \cite[Chapter V]{LS77}). We use the common convention and refer to the $0$-cells, $1$-cells, and $2$-cells of $M$ as \emph{vertices}, \emph{edges}, and \emph{regions}, respectively.
\end{definition}

All maps are assumed to be connected and simply connected unless we note otherwise. Vertices of valence one or two are allowed. Regions are open subset of the plane which are homeomorphic to open disk and edges are images of open interval (since any planner diagram may be realized on the Euclidean plane using straight lines, one may safely consider the edges as being straight lines and each region as an image of an interior of a polygon). A \emph{sub-map} of $M$ is a map whose vertices, edges, and regions are also regions, edges, and regions of $M$, respectively. Each edge $e$ of $M$ is equipped with an orientation (i.e., a specific choice of beginning and end) and we denote by $e^{-1}$ the same edge but with reversed orientation; $i(e)$ will denote the beginning vertex of $e$ and $t(e)$ will denote the ending vertex of $e$. A \emph{path} in a map $M$ is a sequence of edges $e_1,e_2,\ldots,e_k$ such that $t(e_i)=i(e_{i+1})$ for all $i=1,2,\ldots,k-1$. Given a path $\delta=e_1 e_2 \cdots e_k$ we define $i(\delta)$ to be $i(e_1)$ and $t(\delta)$ to be $t(e_k)$; we denote by $\delta^{-1}$ the reversed path, namely, $e_k^{-1} e_{k-1}^{-1} \cdots e_1^{-1}$. For a path $\delta$ we denote by $|\delta|$ the length of $\delta$ which is the number of edges it contains. A \emph{spike} is a vertex of valence one in $M$. A \emph{boundary path} of a map $M$ is a path that is contained in $\partial M$; a \emph{boundary cycle} is a closed simple boundary path. The term \emph{neighbors}, when referred to two regions, means that the intersection of the regions' boundaries contains an edge; specifically, if the intersection contains only vertices, or is empty, then the two regions are not neighbors. \emph{Boundary regions} are regions with outer boundary, i.e., the intersection of their boundary and the map's boundary contains at least one edge. \emph{Boundary edges} and \emph{boundary vertices} are edges and vertices on the boundary of the map. It will be important later to distinguish between boundary regions and \emph{proper boundary region} which we define next:

\begin{definition}[Proper boundary region]
Let $M$ be a map and $D$ a boundary region. We say that $D$ is a \emph{proper boundary region} if the \emph{closure} of $M \setminus (D \cup \partial D)$ is connected. For a proper boundary region $D$ we have that $\partial M \cap \partial D$ is a \emph{disjoint} union of connected component $C_1, C_2, \ldots C_k$, $k\geq 1$, only one which contains an edge. The \emph{outer boundary} of $D$ is the component $C_i$ which contains an edge. See Figure \ref{fig:propBndRgn}.
\end{definition}

\begin{figure}[ht]
\centering
\includegraphics[totalheight=0.18\textheight]{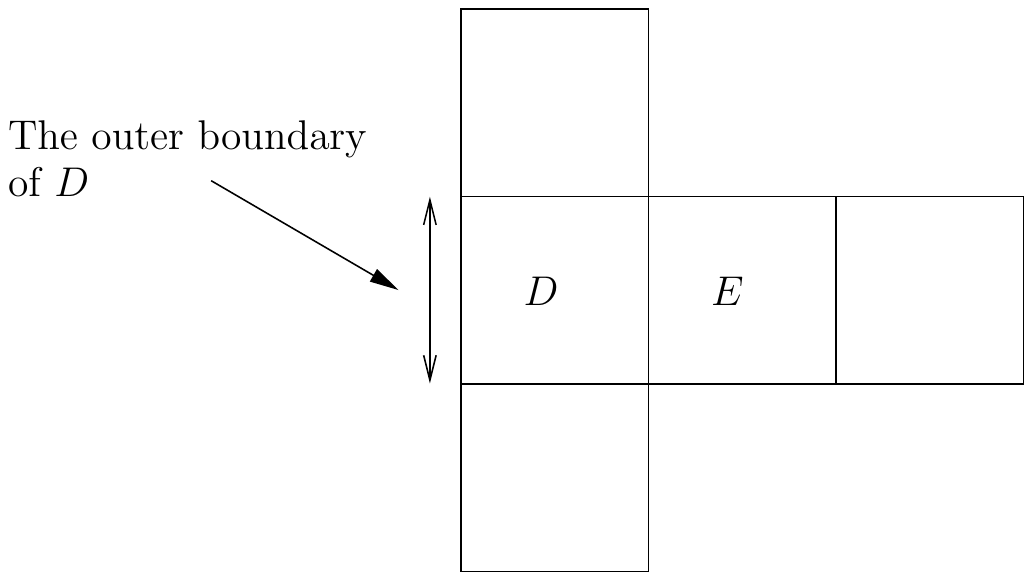}
\caption{Region $D$ is a proper boundary region while region $E$
is a boundary region but not a proper one.} \label{fig:propBndRgn}
\end{figure}

We next turn to van Kampen diagrams which are maps with specific choice of labeling on the their edges.

\begin{definition}
Let $M$ be a map. A \emph{labeling function} on $M$ with labels in group $F$ is a function $\Phi$ defined on the set of edges of $M$ and which sends each edge to an element of $F$ such that $\Phi(e^{-1})=\Phi(e)^{-1}$. We naturally extends $\Phi$ to paths in the $1$-skeleton of $M$ by sending a path $e_1 e_2 \cdots e_k$ to $\Phi(e_1) \Phi(e_2) \cdots \Phi(e_k)$. Given a finite presentation $\Pres{X|\R}$, an \emph{$\R$-diagram} is a map $M$ together with a labeling function $\Phi$ such that $\Phi(e)\in\ws$ and the images of boundary cycles of regions are elements of the symmetric closure of $\R$. $\R$-diagrams will be also referred to as \emph{van Kapmen diagram} and sometimes as just diagram if the set $\R$ is known.
\end{definition}

Suppose we are given a group $G$ with presentation $\Pres{X|\R}$. van Kampen theorem \cite[Chapter V]{LS77} states that a word $W\in\ws$ presents the identity of $G$ if and only if there is an $\R$-diagram with a boundary cycle labeled by $W$. A van Kampen diagram with boundary label $W$ is called \emph{minimal} if it has the minimal number of regions out of all the $\R$-diagrams with boundary cycle labeled by $W$.

A map $M$ with boundary cycle $\delta\mu^{-1}$ is \emph{$(\delta,\mu)$-thin} if every region $D$ has at most two neighbors and its boundary $\partial D$ intersects both $\delta$ and $\mu$. A map is \emph{thin} if it is $(\delta,\mu)$-thin for some boundary paths $\delta$ and $\mu$ and a diagram is thin if its underling map is thin. See Figure \ref{fig:thinEquaDiag} for an illustration of a thin map. If two elements $W$ and $U$ present the same element in a group $G$ then $WU^{-1}=1$ in $G$ and so by the van Kampen theorem there is a van Kampen diagram $M$ with boundary label $WU^{-1}$. We call such diagram \emph{equality diagram for $W$ and $U$}. If in addition the diagram is $(\delta,\mu)$-thin and the labels of $\delta$ and $\mu$ are $W$ and $U$, respectively, then we say that $M$ is a \emph{thin equality diagram for $W$ and $U$}.

\begin{figure}[ht]
\centering
\includegraphics[totalheight=0.17\textheight]{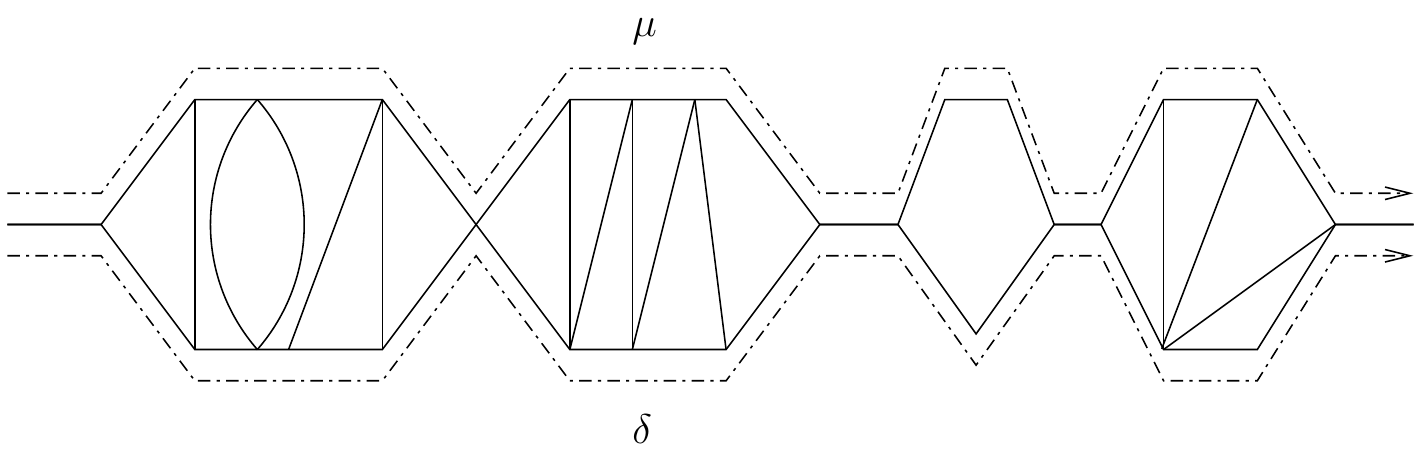}
\caption{An illustration of thin map}\label{fig:thinEquaDiag}
\end{figure}


$V(6)$ diagrams are a special type of diagrams which will be used extensively in this work. The triangle condition induces a $V(6)$ structure on admissible derived diagrams of $G$ (see Definition \ref{def:derivedDiagram} and the sequel).

\begin{definition}[$V(6)$ diagram] \label{def:v6Diag}
A map $M$ is a called a \emph{$V(6)$ map} if the following holds: suppose $D$ is an \emph{inner region such that $\partial D$ is a simple closed path} then:
\begin{enumerate}
 \item $D$ has at least four neighbors.
 \item If the boundary of $D$ contains a vertex of
 valence three then $D$ has at least six neighbors.
\end{enumerate}
A van Kampen diagram is a $V(6)$ diagram if its underlying map is
a $V(6)$ map.
\end{definition}

$V(6)$ diagrams are a special type of a larger class of diagrams known as $W(6)$ diagrams, which are described in \cite{Juh89}. The next theorem summarizes some of the properties of such diagrams which are used later. In section \ref{sec:structV6} we give further important properties of $V(6)$ maps.

\begin{theorem}[Theorems 2.3 in \cite{Juh89}]\label{thm:w6Struct}
Let $M$ be a $W(6)$ diagram. The following holds:
\begin{enumerate}
 \item Every boundary cycle of a region is a simple path.
 \item The intersection of boundary paths of every two regions is connected or empty.
\end{enumerate}
\end{theorem}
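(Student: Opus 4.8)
This is the basic structure theorem for $W(6)$ diagrams --- the analogue, for this weighted small cancellation condition, of the classical structure theorems of Greendlinger and of Lyndon--Schupp for $C(6)$ --- so the natural tool is a combinatorial Gauss--Bonnet (curvature) argument. The plan is to prove both assertions simultaneously by induction on the number $F$ of regions of $M$; the cases $F\le 1$ are immediate.

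For the inductive step I would fix combinatorial curvature in the usual way: assign to each corner of $M$ (an incidence of a region $D$ with a vertex $v$ on its boundary cycle, counted with multiplicity) an angle so that every interior vertex receives total angle $2\pi$, and put $\kappa(D)=2\pi-\sum(\pi-\text{corner angle})$ summed over the corners of $D$, so that the combinatorial Gauss--Bonnet identity
\[
\sum_{D}\kappa(D)+\sum_{v\in\partial M}\kappa(v)=2\pi
\]
holds. The whole force of the $W(6)$ condition is that, for a suitable choice of angles (giving vertices of larger valence smaller angles, which is why the ``$\ge 4$ neighbours, and $\ge 6$ if some boundary vertex has valence three'' form of the hypothesis is used rather than the naive ``$\ge 6$ sides''), every interior region $D$ with \emph{simple} boundary cycle satisfies $\kappa(D)\le 0$; the inductive hypothesis guarantees that in the subdiagrams to which the estimate is applied the neighbour count is honest, since there region boundaries are simple and pairwise intersections connected. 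Hence, in such a diagram all positive curvature is pushed onto $\partial M$.

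To prove (1), suppose for contradiction that the boundary cycle of some region $D$ traverses a vertex $v$ twice; then $\partial D$ contains a nontrivial closed sub-path, and choosing an innermost such sub-path $\gamma$ gives a simple closed curve bounding a sub-complex $N\subsetneq M$ with $\partial N=\gamma$, where $D$ lies outside $\gamma$ (since $D$ is an open disc whose boundary strictly contains $\gamma$), so $N$ has strictly fewer regions than $M$ and its interior regions are interior in $M$. By induction $N$ is a $W(6)$ diagram in which the structural conclusions hold, so applying Gauss--Bonnet to $N$ together with $\kappa\le 0$ on its interior regions forces the boundary $\gamma$ --- an embedded sub-path of a single relator boundary --- to carry curvature $\ge 2\pi$, and one then shows this is impossible without a boundary region of $N$ being left with too few neighbours (equivalently, without $D$ itself, which is adjacent to all of them along $\gamma$, contradicting $W(6)$); spikes and valence-$\le 2$ vertices along $\gamma$ are removed by the usual reductions beforehand. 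For (2), if $\partial D_1\cap\partial D_2$ has two distinct components $C_1,C_2$, choose arcs $\lambda\subseteq\partial D_1$, $\mu\subseteq\partial D_2$ joining $C_1$ to $C_2$ so that $\lambda\mu^{-1}$ is an innermost simple closed curve bounding a sub-complex $N$; then $D_1,D_2\notin N$, so $N$ is again a smaller $W(6)$ diagram, its boundary is a union of two arcs lying on single relator boundaries, and Gauss--Bonnet plus $\kappa\le 0$ on the interior regions of $N$ forces $\ge 2\pi$ of curvature onto those two arcs and the two corners $C_1,C_2$, which again is incompatible with $W(6)$; the degenerate case in which $N$ has no interior region is a strip of regions each neighbouring both $D_1$ and $D_2$, excluded directly by reducedness and the neighbour count.

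The main obstacle is the curvature bookkeeping. One has to pin down the corner-angle assignment so that the exact $W(6)$ neighbour hypotheses (not just the cruder ``$\ge 6$ sides'') really do give $\kappa(D)\le 0$ for every interior region, and then to make the boundary estimates for the extracted monogon and bigon sharp enough that ``too much curvature on a short relator arc'' genuinely pins down a region violating $W(6)$ --- this sharp form of the contradiction, rather than the topological extraction step, is the technical heart. The remaining work is routine but must be done carefully: checking that each extracted subdiagram is still a $W(6)$ diagram so the induction applies, verifying the base case, and dealing with the degenerate features (subdiagrams with no interior regions, spikes, and vertices of valence one or two).
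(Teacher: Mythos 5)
You should first note that the paper contains no proof of this statement: it is imported wholesale as Theorem 2.3 of \cite{Juh89}, and the only ``proof'' in the paper is that citation. So your sketch cannot be matched against an internal argument; it has to stand on its own, and as it stands it has a genuine gap. Your strategy (angle assignment, combinatorial Gauss--Bonnet, induction on the number of regions, extraction of an innermost monogon/bigon subdiagram $N$) is the expected one, but the decisive step is asserted rather than carried out: in both parts the contradiction is supposed to come from ``$\kappa\le 0$ on interior regions forces at least $2\pi$ of curvature onto the extracted boundary arc(s), and one then shows this is incompatible with $W(6)$.'' That sharp boundary estimate --- which you yourself flag as ``the technical heart'' --- is essentially the whole content of the theorem; deferring it leaves a plan, not a proof.

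There is also an unresolved circularity in how the induction is organized. The bound $\kappa(D)\le 0$ for an interior region requires translating the $W(6)$/$V(6)$ hypothesis (a count of \emph{neighbours} in $M$, with a valence condition) into a count of sides/corners of $D$; for the regions of $N$ adjacent to the cut curve $\gamma$, their neighbours in $M$ include $D$ possibly met along several components of $\partial D\cap\partial(\cdot)$, and converting ``at least $4$ (resp.\ $6$) neighbours in $M$'' into ``at least $3$ (resp.\ $5$) sides inside $N$'' uses precisely statement (2) for $M$, which is what you are proving. Your appeal to the inductive hypothesis only covers the proper subdiagram $N$, not the regions along $\gamma$ viewed inside $M$, so either the simultaneous induction must be ordered much more carefully or a direct counting argument for the regions along $\gamma$ must be supplied. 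Finally, the $V(6)$/$W(6)$ condition as stated constrains only inner regions with simple boundary cycles, so the degenerate configurations you dismiss in one clause (the ``strip'' in part (2), spikes, low-valence vertices, boundary regions of $M$ lying inside $N$) are not excluded by the condition alone; Juh\'asz's Theorem 2.3 rests on standing hypotheses on his maps that your argument would need to state and use explicitly.
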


\section{Derived diagrams and Admissible diagrams} \label{sec:derDiagAdmiDiag}

Shephard groups are not small cancellation groups. We therefore apply the (now standard) procedure of derived diagrams, which allows us to use geometric arguments.

\begin{definition}\label{def:derivedDiagram}
Let $M$ be a van Kampen diagram and $M'=\Set{\Delta_1,\ldots,\Delta_k}$ be a finite collection of sub-diagrams of $M$. We say that $M'$ is a \emph{derived diagram of $M$} if the following conditions hold:
\begin{enumerate}
 \item Each $\Delta_i$ is connected and simply connected. Moreover, $\Delta_i$ has connected, non-empty interior and no spikes (vertices of valence one). In other words, the interior of $\Delta_i$ is homomorphic to an open disk and the closure of the interior of $\Delta_i$ is exactly $\Delta_i$.
 \item Any two $\Delta_i$ and $\Delta_j$ have disjoint interior.
 \item The closure of the interior of $M$ is the union of the closure of the elements in $M'$.
\end{enumerate}
The elements of $M'$ are called the \emph{derived regions of $M'$}. To distinguish the derived region from the regions of $M$ we will sometimes refer to the regions of $M$ as \emph{regular} regions. $M'$ has a natural diagram structure where its regions are the derived regions and the boundary is the boundary of $M$. This structure is used (often implicitly) in the sequel.
\end{definition}

Given a diagram $M$ we can form the \emph{trivial derived diagram} of $M$ whose derived regions are exactly the regular regions of $M$. Other derived diagrams of $M$ may be formed by joining together several regions of $M$ into one derived region. In that case we say that the derived region is formed by a \emph{gluing} of regular regions. For convenience, in the sequel we will talk about a derived diagram without explicitly mentioning the underlying diagram it was derived from. Specifically, we will discuss derived diagrams which have a given boundary label without mentioning the underling diagram. This will only be done for convenience and it should be clear that any derived diagram $M'$ is derived from some van Kampen diagram $M$.

Suppose $G=G(X,\mathcal{C})$ is a Shephard group with relations $\R=\R(X,\mathcal{C})$ (as given in the introduction) and suppose $M$ is a van Kampen $\R$-diagram with connected interior. There are two types of regions in $M$ according to the two types of relations in $\R$. The first type of regions are the regions for which their boundary labels contain two different generators. The second type is the region whose boundary label is a power of single generator. We next describe a gluing scheme (that is rigorously given below) in which neighboring regions of the first type and which are labeled by the same two generators are glued together. This technique will generate derived diagrams in which all its derived regions are in themselves diagrams over one of the edge sub-groups.

\begin{definition}[$ij$-type derived region]
Let $M$ be a van Kampen diagram over a Shephard group $G(X,\mathcal{C})$ and let $M'$ be a derived diagram of $M$. Suppose $\Delta$ is a derived region in $M'$. We call $\Delta$ an \emph{$ij$-type region} ($i\neq j$) if it contains only regular regions that their label is in $\mathcal{W}(x_i,x_j)$. $\Delta$ is called \emph{proper $ij$-type} if one of the following holds:
\begin{enumerate}
 \item $\Delta$ contains at least one regular region $D$ that its boundary label contains occurrences of both $x_i$ and $x_j$ (or their inverses).
 \item The interior of $\Delta$ is one of the connected components of the interior of $M$.
\end{enumerate}
$\Delta$ is called \emph{proper} if it is proper $ij$-type for some $i$
and $j$.
\end{definition}

\begin{definition} \label{def:admisDerDiag}
Let $M$ be a van Kampen diagram over a Shephard group $G(X,M)$ and let $M'$ be a derived diagram over $M$.
$M'$ is called \emph{pre-admissible} if for any derived region $\Delta$, we have that $\Delta$ is an $ij$-type region for some $i$ and $j$. We say that a pre-admissible derived diagram $M'$ is \emph{admissible} if the following conditions hold:
\begin{enumerate}
 \item All derived regions in $M'$ are proper.
 \item If $\Delta_1$ and $\Delta_2$ are neighbors and $\Delta_1$ is an $ij$-type region then $\Delta_2$ is not an $ij$-type region.
\end{enumerate}
\end{definition}

Suppose we are given a van Kampen diagram $M$ and a derived diagram $M'$ of $M$ that is pre-admissible (e.g., the trivial derived diagram has this property). Our next goal is to show that there is a gluing process of the regions in $M'$ that results in an admissible derived diagram.

\begin{proposition}\label{prop:gluingProccess}
Let $M$ be a van Kampen diagram over a Shephard group $G$ and let $M'=\Set{\Delta_1,\ldots,\Delta_n}$ be a derived diagram of $M$ which is pre-admissible. Then, either $M'$ is admissible or we can form a new derived diagram $M''$ by gluing two regions in $M'$ such that $M''$ is pre-admissible. More formally, we can find two adjacent derived regions $\Delta_r$ and $\Delta_s$ such that by replacing them with $\Delta_{r,s}$, a sub-diagram of $M$ formed by gluing $\Delta_r$ and $\Delta_s$, we get the derived diagram
\[
M'' = ( M' \setminus \Set{\Delta_r,\Delta_s} ) \cup \Set{\Delta_{r,s}}
\]
which is pre-admissible and $|M''|<|M'|$.
\end{proposition}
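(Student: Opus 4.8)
The plan is to induct on the number of derived regions and argue that failure of admissibility always exposes an adjacent pair of regions that can be glued while preserving pre-admissibility. Concretely, suppose $M'$ is pre-admissible but not admissible. By Definition \ref{def:admisDerDiag}, one of two things must fail: either some derived region $\Delta_r$ is not proper, or there are neighbors $\Delta_r$ and $\Delta_s$ which are both $ij$-type for the same pair $i,j$. In the second case I would set $\Delta_{r,s}$ to be the sub-complex of $M$ consisting of all regular regions (and their closures) contained in $\Delta_r \cup \Delta_s$; since $\Delta_r$ and $\Delta_s$ share an edge, this union is connected, and since both $\Delta_r$ and $\Delta_s$ have connected non-empty interior homeomorphic to an open disk and meet along a connected piece of their boundaries (they are neighbors, so the intersection contains an edge), the glued object again has connected, simply connected, non-empty interior with no spikes. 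All its regular regions are labelled in $\mathcal{W}(x_i,x_j)$, so $\Delta_{r,s}$ is again $ij$-type, and replacing $\Delta_r,\Delta_s$ by $\Delta_{r,s}$ yields $M''$ with $|M''| = |M'| - 1 < |M'|$ and $M''$ still pre-admissible (every other region is untouched, and the new region is $ij$-type).

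For the first case — some $\Delta_r$ is not proper — I would use the negation of the definition of ``proper $ij$-type''. Not being proper means: no regular region in $\Delta_r$ has a boundary label containing both $x_i$ and $x_j$, \emph{and} the interior of $\Delta_r$ is not a whole connected component of the interior of $M$. The first condition forces every regular region in $\Delta_r$ to have boundary label a power of a single generator; combined with the fact that $\Delta_r$ is $ij$-type and $M$ is a diagram over the Shephard presentation, this pins down the structure of $\Delta_r$ tightly (its boundary label lies in $\mathcal{W}(x_i)$ or $\mathcal{W}(x_j)$). The second condition guarantees $\Delta_r$ is a proper boundary region of $M'$ in the sense that the closure of the rest is connected, hence $\Delta_r$ has at least one neighbour $\Delta_s$ in $M'$. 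I would then glue $\Delta_r$ into such a neighbour $\Delta_s$: if $\Delta_s$ is $ij'$-type with $\Delta_r$ being, say, an $x_i$-power region, then after gluing the resulting region is still of the form $i j'$-type (the single-generator regular regions of $\Delta_r$ are in $\mathcal{W}(x_i)\subseteq\mathcal{W}(x_i,x_{j'})$ provided $i\in\{\,$the index pair of $\Delta_s\}$; one has to check the indices match up, using that $\Delta_r$ being $ij$-type and a pure $x_i$-region means we may equally regard it as $i\ell$-type for any $\ell$). Again $|M''| = |M'|-1$ and pre-admissibility is retained.

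The main obstacle, and the step I expect to require the most care, is the index bookkeeping in the non-proper case: an $ij$-type derived region whose regular regions are all single-generator powers is simultaneously $ij$-type, $i\ell$-type, and $j\ell$-type for various $\ell$, so one must argue that it can always be absorbed into \emph{some} neighbour without violating the $ij$-type requirement of pre-admissibility for that neighbour. I would handle this by observing that a neighbour $\Delta_s$ shares an edge with $\Delta_r$, that edge carries a letter $x_i^{\pm 1}$ (say), so $\Delta_s$ must be $i k$-type for some $k$; then re-labelling $\Delta_r$ as $ik$-type (legitimate, since its regular regions are $x_i$-powers, hence lie in $\mathcal{W}(x_i,x_k)$) makes the glued region honestly $ik$-type. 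A secondary point to verify is that the gluing never creates a spike or disconnects the interior — this follows from Theorem \ref{thm:w6Struct} or directly from the fact that in a van Kampen diagram the union of two regions meeting along an edge-path is again a disk — and that the boundary of $M''$ equals the boundary of $M$, which is immediate since gluing only affects interior structure. Finally, since $|M'|$ strictly decreases at each step and is a positive integer, iterating the proposition terminates, which is the use made of it in the sequel.
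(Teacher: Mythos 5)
Your proposal follows essentially the same route as the paper: split on which admissibility condition fails, glue two neighbouring $ij$-type regions in the second case, and in the non-proper case absorb the single-generator region into a neighbour whose type shares that generator, noting each gluing keeps pre-admissibility and drops the region count by one. Your index bookkeeping for the non-proper case is in fact more explicit than the paper's one-line justification (and your appeal to Theorem \ref{thm:w6Struct} is unnecessary — the paper's Lemma \ref{lem:gluingTwoRgns} is the relevant topological fact), but the argument is the same.
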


Before we prove Proposition \ref{prop:gluingProccess}, let us state
the following simple topological lemma.

\begin{lemma} \label{lem:gluingTwoRgns}
Suppose $D_1$ and $D_2$ are two regions in a map $M$ and $e$ is an edge in $\partial D_1\cap\partial D_2$. By gluing $D_1$ and $D_2$ along $e$ (or by removing $e$ from the map) we get a valid map.
\end{lemma}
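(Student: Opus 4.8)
The plan is to write $M''$ down explicitly and check that it meets the definition of a map; the only real content is that the merged cell is again an open disk. Let $M''$ have the same vertices and edges as $M$ except that the $1$-cell $e$ is discarded (and, afterwards, an endpoint of $e$ deleted as well should it become isolated --- this can happen only if $e$ is a loop bounding a monogon), and let its regions be the regions of $M$ other than $D_1$ and $D_2$ together with the single new cell $D_{1,2}:=D_1\cup e^{\circ}\cup D_2$, where $e^{\circ}$ denotes the open $1$-cell (the interior) of $e$. Since in a map a region simply \emph{is} its own open $2$-cell, the underlying point set $|M''|$ coincides with $|M|$ (we have merely re-grouped one $1$-cell together with its two adjacent $2$-cells into a single $2$-cell); hence $M''$ is automatically finite, planar, connected, and simply connected, and every region of $M''$ other than $D_{1,2}$ is unchanged. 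So it suffices to prove that $D_{1,2}$ is an open subset of the plane homeomorphic to an open disk.

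First, $D_{1,2}$ is open and bounded: because $D_1\neq D_2$ while $e\subseteq\partial D_1\cap\partial D_2$, the edge $e$ is not a boundary edge of $M$ (a boundary edge is incident to only one region), so $e^{\circ}$ lies in the interior of $|M|$ with $D_1$ occupying one of its two local sides there and $D_2$ the other; thus every point of $e^{\circ}$ has a planar disk neighbourhood inside $D_1\cup e^{\circ}\cup D_2$, which, together with the openness of $D_1$ and $D_2$, makes $D_{1,2}$ open, bounded, and connected. For the disk structure, note that $e^{\circ}$ is traversed exactly once by $\partial D_1$ (again since $D_1\neq D_2$ and $e$ has $D_1$ on only one side), so it is a genuine boundary arc of the open disk $D_1$, and likewise of $D_2$; hence $D_{1,2}$ is obtained by gluing two open disks along such an arc and is therefore an open disk. (Equivalently: $D_{1,2}$ is an open, connected, simply connected planar set, being two simply connected opens glued along the connected set $e^{\circ}$; and every bounded such set is homeomorphic to an open disk.) Reading the whole construction backwards gives the ``gluing of $D_1$ and $D_2$ along $e$'' formulation, so nothing further is needed.

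I expect the only points needing a little care to be the book-keeping of degenerate configurations --- $e$ a loop, or an endpoint of $e$ dropping to valence zero --- which are handled by also deleting isolated vertices, and making the disk-gluing fact precise. It is also worth recording that the hypothesis $D_1\neq D_2$ is used essentially: if $e$ occurred twice on the boundary of one region, deleting it could disconnect that region or turn it into an annulus.
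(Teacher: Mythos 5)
Your proof is correct, but it proceeds along a different route from the paper's. The paper realizes each region via its characteristic polygon: it takes closed polygons $P_1,P_2$ mapping onto the closures of $D_1,D_2$ (homeomorphically on the interiors), pulls $e$ back to edges $e_1\subset P_1$ and $e_2\subset P_2$, glues $P_1$ to $P_2$ along these, and observes that the resulting polygon maps onto the closure of $D_1\cup e\cup D_2$ with its interior carried homeomorphically onto the new cell; the disk property is thus inherited from the model polygons and no characterization of planar simply connected sets is needed. You instead work intrinsically with the point set $D_{1,2}=D_1\cup e^{\circ}\cup D_2$ in the plane: you use $D_1\neq D_2$ to see that $e^{\circ}$ has $D_1$ on one side and $D_2$ on the other, conclude openness from the half-disk neighbourhoods along $e^{\circ}$, and then appeal to the fact that a bounded, open, connected, simply connected planar set is an open disk (Riemann mapping/Schoenflies-type fact). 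That buys you a cleaner bookkeeping of what the new complex is (you literally delete the $1$-cell, and you rightly flag the degenerate cases and the essential use of $D_1\neq D_2$, which the paper leaves implicit), at the cost of the heavier disk-characterization theorem and of one slightly loose step: ``two simply connected opens glued along the connected set $e^{\circ}$'' is not literally a van Kampen decomposition, since $D_i\cup e^{\circ}$ is not open; to make it precise you should thicken each piece by a collar across $e^{\circ}$ (using the same half-disk neighbourhoods), or simply fall back on the paper's polygon-gluing, which sidesteps the issue. With that repair your argument is complete and at least as careful as the paper's sketch.
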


See figure \ref{fig:glueTwoReg} for the illustration of Lemma \ref{lem:gluingTwoRgns}. Recall that we can regard $M$ as a subset of the Euclidean plane and we can regard $D_1$ and $D_2$ as images of interiors of polygons. The essential property of a region in a diagram is that its interior is connected and simply connected (namely, a homeomorphic image of the interior of an open disk). Therefore, we have to show that the result has a simply connected interior. Let $P_1$ and $P_2$ be two closed polygons which are copied on the closure of $D_1$ and $D_2$, respectively, and their interiors are homeomorphically copied on $D_1$ and $D_2$, respectively. Let $f_i:P_i\to M$ be the maps that copy $P_i$ to the closure of $D_i$, $i=1,2$. The edges of $P_i$ are copied onto the edges of $\partial D_i$ ($i=1,2$), edge by edge. Take the pre-images $e_1$ and $e_2$ of $e$ in $P_1$ and $P_2$, respectively, and construct $P$ by gluing $P_1$ and $P_2$ along it and taking the closure. Let $D=D_1\cup\Set{e}\cup D_2$. $P$ is a polygon that can be copied on the closure of $D$ such that its interior is homoeomorphically copied on $D$ and thus $D$ has simply connected interior. We shall call below to the process just described as "gluing".

\begin{figure}[ht]
\centering
\includegraphics[totalheight=0.25\textheight]{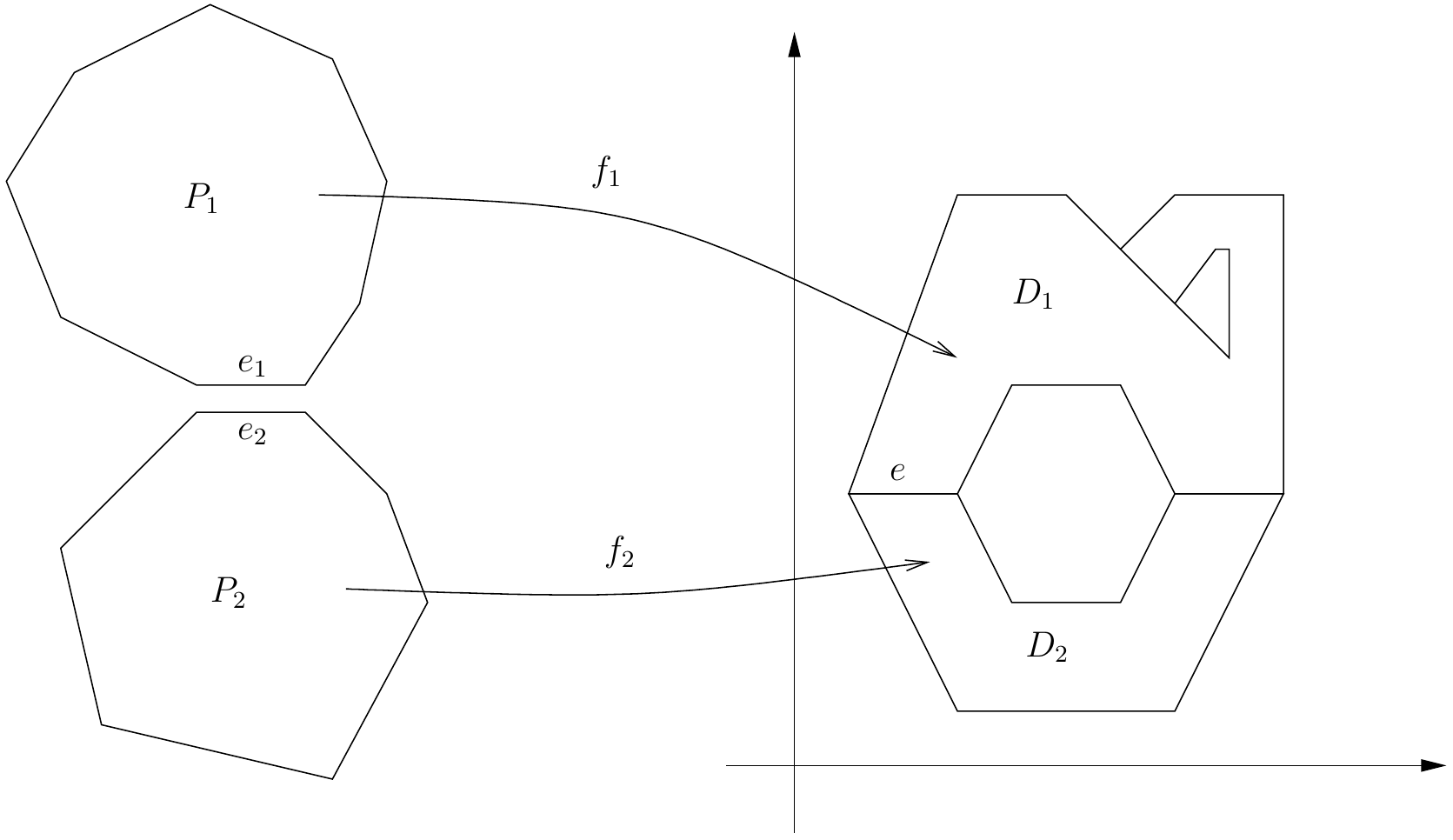}
\caption{Gluing two regions} \label{fig:glueTwoReg}
\end{figure}

\begin{proof}[Proof of Proposition \ref{prop:gluingProccess}]
Let $M$ and $M'$ be the diagram and derived diagram given in the lemma. We are done if $M'$ is admissible. Hence, assume that $M'$ is pre-admissible but not admissible. Since $M'$ is not admissible one of the two conditions of Definition \ref{def:admisDerDiag} do not hold. We next show how to construct $M''$ in each of the cases.
\begin{enumerate}
 \item Suppose $\Delta$ is a derived region in $M'$ that is not proper. Then, (1) $\Delta$ contains only regions of $M$ with boundary labeled by $x_i^{m_{ii}}$ for some $i$, and (2) its interior is not one of the connected components of the interior of $M$. Hence, there is a derived region $\Lambda$ in $M'$ that is a neighbor of $\Delta$. Since $\Lambda$ is a neighbor of $\Delta$ its outer label contains $x_i$ or its inverse. Consequently, we can form $M''$ by gluing $\Delta$ and $\Lambda$ and the resulting derived diagram is pre-admissible.
 \item Suppose $\Delta_1$ and $\Delta_2$ are two neighboring derived regions that are both $ij$-type. We can form $M''$ by gluing the two regions.
\end{enumerate}
\end{proof}

\begin{corollary} \label{cor:admisDerDiagExst}
If $M$ is a van Kampen diagram over a Shephard group then there is an admissible derived diagram for $M$.
\end{corollary}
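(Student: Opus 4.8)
The plan is a descent argument on the number of derived regions, using Proposition \ref{prop:gluingProccess} as the inductive step. The starting point is the observation already recorded before that proposition, namely that the \emph{trivial} derived diagram of $M$ (the one whose derived regions are exactly the regular regions of $M$) is pre-admissible: every regular region of $M$ carries a boundary label either in $\mathcal{W}(x_i,x_j)$ for some $i\neq j$, or equal to $x_i^{m_{ii}}\in\mathcal{W}(x_i)\subseteq\mathcal{W}(x_i,x_j)$ for any choice of $j\neq i$, so in all cases it is an $ij$-type region, and Definition \ref{def:admisDerDiag} is met.

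I would then iterate as follows. Put $M_0$ equal to the trivial derived diagram. Having produced a pre-admissible derived diagram $M_t$ of $M$, check whether it is admissible; if so, halt and output $M_t$. If not, apply Proposition \ref{prop:gluingProccess} to $M_t$ to obtain a pre-admissible derived diagram $M_{t+1}$ of $M$, formed from $M_t$ by gluing two adjacent derived regions, with $|M_{t+1}| < |M_t|$; then continue with $t$ replaced by $t+1$. Since $|M_0| > |M_1| > |M_2| > \cdots$ is a strictly decreasing sequence of non-negative integers, this iteration must terminate after finitely many steps at some $M_T$. Each $M_t$ is a derived diagram of $M$ (a gluing of a derived diagram of $M$ is again one) and is pre-admissible by construction; and the iteration can only halt at a diagram that passes the admissibility test. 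Hence $M_T$ is an admissible derived diagram of $M$, which is what the corollary asserts.

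I do not foresee any real obstacle: the entire content of the corollary is packaged into Proposition \ref{prop:gluingProccess}, and what remains is only to note two things that that proposition already supplies — that pre-admissibility is preserved under the gluing step, so the step may be reapplied at every stage, and that the region count \emph{strictly} decreases (not merely weakly), so the recursion is well-founded. The one mildly degenerate case, that of $M$ having no regions, is immediate, since then the trivial derived diagram has no derived regions and is (vacuously) already admissible.
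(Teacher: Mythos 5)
Your proof is correct and follows the same route as the paper: start from the trivial derived diagram (noted to be pre-admissible), iterate the gluing step of Proposition \ref{prop:gluingProccess}, and use the strictly decreasing number of derived regions to guarantee termination at an admissible diagram.
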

\begin{proof}
As stated before, the trivial derived diagram of $M$ is pre-admissible. Thus, by Proposition \ref{prop:gluingProccess} there is a series $N_1,N_2,\ldots,N_k$ of derived diagrams of $M$ which are all pre-admissible and such that the number of regions in $N_{i+1}$ is one less the number of derived regions in $N_i$. If the series does not terminate with an admissible diagram we can extend it further until it terminate with an admissible derived diagram of $M$.
\end{proof}

We thus established the existence of admissible derived diagrams. We next analyze their properties in the context of large triangles Shephard groups. We start with few simple observations.

\begin{observation} \label{obs:derRgnInAdmisDerDiag}
Let $M'$ be an admissible derived diagram. Then, the following hold:
\begin{enumerate}
 \item \label{obs:derRgnInAdmisDerDiag:1} Suppose $\Delta_1$ and $\Delta_2$ are two neighboring derived regions in $M'$ and $\rho$ is a connected component of $\partial\Delta_1 \cap \partial\Delta_2$. Suppose further that $\Delta_1$ is an $ij$-type and $\Delta_2$ is a $k\ell$-type region. Then, the label of $\rho$ consists of \emph{one and only one} of the generators in $\Set{x_i,x_j,x_k,x_\ell}$. The reason for this is that if $\rho$ is labeled by two different generators then we would get that $\Delta_1$ and $\Delta_2$ are both $ij$-type (because $\rho$ would determine that) but that is impossible because $M'$ is admissible.
 \item \label{obs:derRgnInAdmisDerDiag:2} Suppose $\Delta$ is an inner derived region in $M'$ such that the boundary cycle of $\Delta$ is a simple closed path. If $W$ is the boundary label of $\Delta$ then $\lp{W}$ bounds from below the number of neighbors of $\Delta$. This follows from the previous observation since each syllable of $W$ contributes at least one neighbor to $\Delta$.
 \item  \label{obs:derRgnInAdmisDerDiag:3} If we have two neighboring derived region, $\Delta_1$ and $\Delta_2$, in $M'$ which are $ij$-type and $k\ell$-type respectively, then we must have that one (and only one) of the following equalities holds:
     \[
     i = k,\; i=\ell,\; j=k,\; j=\ell
     \]
     This follows from the first observation since $\Delta_1$ and $\Delta_2$ have the same generator appearing in the label of their boundaries.
 \item \label{obs:derRgnInAdmisDerDiag:4} Suppose $\Delta_1$, $\Delta_2$ and $\Delta_3$ are derived regions such that each two of them are neighbors. Then, there is a triangle in the Shephard graph between the three generators that appear on the boundaries of the three derived regions. This follows from previous observation as follows. Since $\Delta_1$, $\Delta_2$ and $\Delta_3$ are neighbors they are $ij$-type, $jk$-type, and $ki$-type, respectively, for some indices $i$, $j$, and $k$. Then, the generators $x_i$ and $x_j$, the generators $x_j$ and $x_k$, and the generators $x_k$ and $x_i$ are connected in the Shephard graph. Hence, $x_i$, $x_j$, and $x_k$ form a triangle in the Shephard graph.
\end{enumerate}
\end{observation}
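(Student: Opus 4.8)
My plan is to deduce all four claims from a single elementary fact, which I would state and prove first; call it the \emph{alphabet lemma}: the label read along any boundary sub-path of an $ij$-type derived region $\Delta$ is a word in $\mathcal{W}(x_i,x_j)$. This is immediate from unwinding the definitions: $\Delta$ is a gluing of regular regions of the ambient van Kampen diagram, and for a Shephard presentation the only relators such regions can carry are cyclic conjugates and inverses of $x_i^{m_{ii}}$, of $x_j^{m_{jj}}$, and of the Artin relator $\br{x_i,x_j}{m_{ij}}\br{x_j,x_i}{m_{ij}}^{-1}$, each of which involves only the letters $x_i^{\pm1}$ and $x_j^{\pm1}$; so every edge of $\partial\Delta$ belongs to such a region (we assume, as usual, that edges carry single-generator labels), and any boundary sub-path therefore reads a word in $\mathcal{W}(x_i,x_j)$. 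I would also record at this point that a derived region with a neighbour is proper via condition~(1) of the definition of proper $ij$-type (it cannot be a full component of the interior), hence contains an Artin-type regular region, so that $x_i$ and $x_j$ are joined by an edge of the Shephard graph.

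Claims~(1) and~(3) then fall out of the same short computation. For~(1): the component $\rho$ is a boundary sub-path of both $\partial\Delta_1$ and $\partial\Delta_2$, so by the alphabet lemma its label lies in $\mathcal{W}(x_i,x_j)\cap\mathcal{W}(x_k,x_\ell)$, i.e.\ uses only generators from $\{x_i,x_j\}\cap\{x_k,x_\ell\}$; since the component witnessing the neighbour relation contains an edge, it uses at least one generator, and it cannot use two, for that would force $\{x_i,x_j\}=\{x_k,x_\ell\}$ and make $\Delta_1$ and $\Delta_2$ of the same type, violating condition~(2) of Definition~\ref{def:admisDerDiag}. Claim~(3) is this fact read off differently: the single generator on $\rho$ produces one of $i=k$, $i=\ell$, $j=k$, $j=\ell$, and any two of these together would force $x_i=x_j$ or $x_k=x_\ell$ (excluded since $ij$- and $k\ell$-types have $i\neq j$, $k\neq\ell$) or $\{x_i,x_j\}=\{x_k,x_\ell\}$ (excluded by admissibility), so exactly one holds. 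For~(2): since $\Delta$ is an inner region with simple closed boundary cycle, each edge of $\partial\Delta$ is shared with exactly one neighbour, so $\partial\Delta$ breaks into maximal arcs, each shared with a single neighbour; by~(1) each such arc has a one-generator label, and since consecutive syllables of the boundary word $W$ use distinct generators no arc can straddle a syllable boundary, so each syllable contributes at least one arc — and two consecutive syllables, having different generators, contribute arcs shared with different neighbours — whence $\lp{W}$ bounds the number of neighbours from below.

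For~(4) I would chain~(3): given pairwise-neighbour derived regions $\Delta_1,\Delta_2,\Delta_3$, write the type of $\Delta_1$ as $\{x_i,x_j\}$; by~(3) it shares a unique generator with $\Delta_2$ and a unique generator with $\Delta_3$, and likewise $\Delta_2$ shares a unique generator with $\Delta_3$. A case analysis — feeding in repeatedly that admissibility forbids two neighbours of the same type and that an $ij$-type region uses two distinct generators — should pin the three types down to $\{x_i,x_j\}$, $\{x_j,x_k\}$, $\{x_k,x_i\}$ for suitable pairwise-distinct $i,j,k$, so that the Artin-type regular region inside each exhibits the corresponding edge of the Shephard graph and $x_i,x_j,x_k$ span a triangle. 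I expect this case analysis to be the main obstacle, the delicate sub-case being the ``fan'' of types $\{x_i,x_j\},\{x_i,x_k\},\{x_i,x_\ell\}$ with four distinct generators all sharing $x_i$; ruling this out (or showing it is harmless because then the three boundaries display essentially only $x_i$) does not seem to follow from the derived-diagram axioms alone, and I anticipate having to descend to the underlying minimal van Kampen diagram and use reducedness — e.g.\ that two regions each labeled by a power of the same generator cannot be adjacent — to settle it.
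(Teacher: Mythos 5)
Your treatment of parts (1)--(3) is correct and is essentially the paper's own argument: the ``alphabet lemma'' is just the explicit unwinding of the fact (used implicitly in the paper) that an $ij$-type derived region contains only regular regions with labels in $\mathcal{W}(x_i,x_j)$, and your deduction of (1) and (3) from it, via condition (2) of Definition~\ref{def:admisDerDiag}, matches the reasoning embedded in the observation. Your extra remark that a derived region with a neighbour must be proper through the first alternative, hence contains a regular region witnessing $m_{ij}\neq\infty$, is a worthwhile step the paper leaves implicit, since without it ``$x_i$ and $x_j$ are connected in the Shephard graph'' has no justification.

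The genuine gap is in part (4), and you have named it yourself: your case analysis is never carried out, and you concede you cannot exclude the ``fan'' configuration in which the three types are $\{x_i,x_j\}$, $\{x_i,x_k\}$, $\{x_i,x_\ell\}$ with four distinct generators, every pairwise intersection being labelled by a power of $x_i$ alone. This configuration is consistent with everything established in (1)--(3) and with admissibility (the three types are pairwise distinct), so your instinct that the derived-diagram axioms alone do not force the cyclic pattern $ij$, $jk$, $ki$ is sound; but then your proposal simply does not prove (4) as stated, and the suggested repair (descending to the underlying diagram and invoking reducedness) is left as a hope rather than an argument. You should be aware, however, that the paper's own justification is no stronger at exactly this point: it asserts outright that pairwise-neighbouring derived regions ``are $ij$-type, $jk$-type, and $ki$-type, respectively, for some indices $i$, $j$, and $k$,'' which tacitly assumes the three pairwise-shared generators are distinct --- precisely the assertion you balked at --- and offers no argument ruling out the fan. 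Since the observation is later invoked in the proof of Proposition~\ref{prop:admissV6} (at a vertex of valence three, to force $m_{ij}\geq 3$ via the large-triangles condition), this is not a harmless omission, and closing it would require an additional argument (e.g., exploiting the structure of the boundary labels at the common vertex or the $\Omega$-minimality available in the actual application) that neither you nor the paper supplies. So: relative to the paper you have not missed an idea that the paper possesses, but as a self-contained proof of part (4) your write-up is incomplete.
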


Let $x_i$ and $x_j$ be two generators which are connected in the Shephard graph. We denote by $E_{ij}$ the following group:
\[
\Pres{x_i,x_j|x_i^{m_{ii}}=x_j^{m_{jj}}=1, \br{x_i,x_j}{m_{ij}} =
\br{x_j,x_i}{m_{ji}} }
\]
Note that if $\Delta$ is a derived region which is an $ij$-type then $\Delta$ is a (regular) van Kampen diagram over $E_{ij}$ (because the regular regions in $\Delta$ have boundary label that is one the relations of $E_{ij}$). Consequently, the boundary label $W$ of $\Delta$ is equal to $1$ in $E_{ij}$.


We next introduce an assumption on the edge-subgroups of the Shephard group. The reader may want to recall the definition of of being equal to $1$ non-trivially in a group (see Notation \ref{not:standardNot} - part \ref{not:nonTriv}).

\begin{definition}[Appel-Schupp Syllable Length Condition] \label{def:ASSyllLenCond}
Let $G(X,M)$ be a Shephard group and let $x_i$ and $x_j$ be two generators that are connected in the Shephard graph. We say that the \emph{Appel-Schupp Syllable Length condition} holds if $\lp{W}\geq 2m_{ij}$ for any $W\in\mathcal{W}(x_i,x_j)$ such that $W=1$ in $E_{ij}$ non-trivially. 
\end{definition}

This condition holds for Artin groups of large type \cite[Lemma 6]{AS83}. It is also true for large triangle Shephard groups with finite edge-subgroups; the proof of this assertion is left to the appendix. We next give an important geometric property of admissible derived diagrams over large triangles Shephard groups. The proposition assumes some additional properties which will be easily satisfied using the construction that follows.

\begin{proposition}\label{prop:admissV6}
Let $M$ be a van Kampen diagram over a large triangles Shephard group $G$ in which Appel-Schupp syllable condition holds. Suppose $M'$ is an admissible derived diagram over $M$ such that if $\Delta$ is an $ij$-type derived region in $M'$ then the boundary label $W$ of $\Delta$ is not trivially equal to $1$ in $G$. 
Then, $M'$ is a $V(6)$ diagram.
\end{proposition}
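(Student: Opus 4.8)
The plan is to verify the two defining conditions of a $V(6)$ map (Definition~\ref{def:v6Diag}) directly, for an arbitrary inner derived region $\Delta$ of $M'$ whose boundary cycle $\partial\Delta$ is a simple closed path. The first step is bookkeeping: since $M'$ is admissible (hence pre-admissible), such a $\Delta$ is an $ij$-type region for some $i,j$, and therefore a (regular) van Kampen diagram over $E_{ij}$; by van Kampen's theorem its boundary label $W\in\mathcal{W}(x_i,x_j)$ equals $1$ in $E_{ij}$. The hypothesis that $W$ is not trivially equal to $1$ in $G$ says that $W$ is not freely reducible to $1$, so in fact $W=1$ \emph{non-trivially} in $E_{ij}$, and the Appel--Schupp syllable length condition gives $\lp{W}\geq 2m_{ij}\geq 4$. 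Condition (1) is then immediate from Observation~\ref{obs:derRgnInAdmisDerDiag}(\ref{obs:derRgnInAdmisDerDiag:2}): $\Delta$ has at least $\lp{W}\geq 4$ neighbours.

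For condition (2) I would show that the presence of a valence-three vertex on $\partial\Delta$ forces $m_{ij}\geq 3$, after which $\lp{W}\geq 2m_{ij}\geq 6$ and Observation~\ref{obs:derRgnInAdmisDerDiag}(\ref{obs:derRgnInAdmisDerDiag:2}) again finishes the count. Let $v\in\partial\Delta$ have valence three. Since $\partial\Delta$ is simple it traverses $v$ exactly once, using precisely two of the three edges at $v$; since $\Delta$ is inner, those two edges are interior edges of $M$. A short case analysis on the cyclic order of the three edges at $v$ shows that the third edge cannot be a boundary edge of $M$ either — otherwise one of the first two would bound the outer face, contradicting that it is interior — so $v$ is an interior vertex and the three sectors around it are three derived regions $\Delta=\Delta_1,\Delta_2,\Delta_3$. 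These are pairwise distinct ($\Delta_1$ occupies a single sector because $\partial\Delta_1$ is simple, and $\Delta_2\neq\Delta_3$ since otherwise the edge separating their two sectors would be a spike of a derived region, forbidden by Definition~\ref{def:derivedDiagram}) and pairwise neighbours, sharing the edges incident to $v$. By Observation~\ref{obs:derRgnInAdmisDerDiag}(\ref{obs:derRgnInAdmisDerDiag:4}) their boundary generators span a triangle in the Shephard graph; since $\lp{W}\geq 2$ forces both $x_i$ and $x_j$ to occur in $W$, the edge $(x_i,x_j)$ is an edge of that triangle, and the large triangles condition gives $m_{ij}\neq 2$, i.e.\ $m_{ij}\geq 3$.

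I expect the only genuinely delicate point to be the planar-topology bookkeeping around the valence-three vertex: ruling out that $v$ lies on $\partial M$, and ruling out the degenerate configuration $\Delta_2=\Delta_3$. Both follow from combining ``$\partial\Delta$ is a simple closed path'' with the ``no spikes'' clause of Definition~\ref{def:derivedDiagram} and the fact that $\Delta$ is inner, but it is worth spelling out, since the entire $V(6)$ conclusion — and hence Theorem~\ref{thm:w6Struct} and everything built on it — rests on this step. Everything else is a direct chain: $ij$-type $\Rightarrow$ diagram over $E_{ij}$ $\Rightarrow$ Appel--Schupp lower bound on $\lp{W}$ $\Rightarrow$ neighbour count via Observation~\ref{obs:derRgnInAdmisDerDiag}(\ref{obs:derRgnInAdmisDerDiag:2}), upgraded from $4$ to $6$ by the large triangles condition precisely when a valence-three vertex is present.
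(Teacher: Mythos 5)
Your proposal is correct and follows essentially the same route as the paper's proof: Appel--Schupp gives $\lp{W}\geq 2m_{ij}\geq 4$, Observation~\ref{obs:derRgnInAdmisDerDiag} (part \ref{obs:derRgnInAdmisDerDiag:2}) converts syllable length into a neighbour count, and a valence-three vertex yields three pairwise-neighbouring derived regions whose generators form a triangle in the Shephard graph, whence $m_{ij}\geq 3$ and $\lp{W}\geq 6$; the paper simply asserts the existence of the two mutually neighbouring regions $\Delta'$, $\Delta''$ without your topological elaboration. One small correction to that elaboration: ruling out $\Delta_2=\Delta_3$ does not follow from the ``no spikes'' clause of Definition~\ref{def:derivedDiagram} (an edge with the same derived region on both sides creates no spike --- it simply lies in that region's interior); the right reason is that valence is computed in the derived diagram $M'$, whose inner edges are common boundary paths of \emph{distinct} derived regions, so if the two sectors across $e_3$ belonged to one derived region then $e_3$ would not be an edge of $M'$ and $v$ would not have valence three there in the first place.
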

\begin{proof}
Suppose $\Delta$ is an inner $ij$-type derived region of $M'$ such that $\partial \Delta$ is a simple path (recall that it is enough to consider regions with simple boundary paths). Suppose $W$ is a boundary label of $\Delta$. By the Observation \ref{obs:derRgnInAdmisDerDiag} - part \ref{obs:derRgnInAdmisDerDiag:2}, $\lp{W}$ bounds from below the number of neighbors of $\Delta$ and by the assumptions of the proposition we have that $W$ equals to $1$ in $E_{ij}$ non-trivially. 
Thus, by Appel-Schupp syllable length condition we get that $\lp{W}\geq4$. Hence, $\Delta$ has at least four neighbors. If $\Delta$ has a vertex of valence three then there are two regions $\Delta'$ and $\Delta''$ that are neighbors of $\Delta$ and of each other. Hence, the three generators that appear on the boundaries of $\Delta$, $\Delta'$, and $\Delta''$ form a triangle in the Shephard graph (Observation \ref{obs:derRgnInAdmisDerDiag} - part \ref{obs:derRgnInAdmisDerDiag:4}). Thus, by the triangle condition, $m_{ij}\geq3$. Hence, once again by Appel-Schupp syllable length condition we have that $\lp{W}\geq6$ and thus $\Delta$ has at least six neighbors.
\end{proof}

Having worked with admissible derived diagrams, we next turn to deal with a more specific type of derived diagrams ($\Omega$-minimal) which have much stronger properties, specifically, the properties that are needed for Proposition \ref{prop:admissV6}. Corollary \ref{cor:OmegeDiagProp} below is the main ingredient that allows our later arguments to work.

\begin{definition}[$\Omega$-minimal derived diagrams]
Let $M$ be a diagram and $M'$ be a derived diagram over $M$. If $\Delta$ is a derived region in $M'$ we denote by $\ell(\Delta)$ the length of the boundary label of $\Delta$ and we denote by $\Omega(M')$ the following sum:
\[
\Omega(M') = \sum_{\Delta\in M'} \ell(\Delta)
\]
A pre-admissible derived diagram with given boundary label $W$ is called \emph{$\Omega$-minimal} if $\Omega(M')$ is minimal and $M'$ has maximal number of derived regions (in that order, first select the pre-admissible derived diagrams that have minimal $\Omega(M')$ from all derived diagrams with boundary label $W$ and then select among them the one with maximal number of derived regions).
\end{definition}

Suppose we are given a series $N_1,N_2,\ldots,N_k$ of derived diagrams, each formed from the previous by gluing two derived regions (as in the proof of Corollary \ref{cor:admisDerDiagExst}). Then, the series $\Omega(N_1),\Omega(N_2),\ldots,\Omega(N_k)$ is strictly decreasing. To see why, suppose we glued $\Delta_1$ and $\Delta_2$ in $N_{i}$ over the path $\rho$ to form $N_{i+1}$. Then, the label of $\rho$ which was counted twice in $\Omega(N_i)$ is not counted at all in $\Omega(N_{i+1})$, and the rest of the sum is left untouched. This together with Proposition \ref{prop:gluingProccess} proves the following lemma:

\begin{lemma}
$\Omega$-minimal derived diagrams are admissible diagrams.
\end{lemma}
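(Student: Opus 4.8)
The statement to prove is that every $\Omega$-minimal derived diagram $M'$ (with a fixed boundary label $W$) is admissible. The plan is to verify the two conditions of Definition~\ref{def:admisDerDiag}: that every derived region is proper, and that no two neighboring derived regions are both $ij$-type for the same pair $(i,j)$. We already know from Corollary~\ref{cor:admisDerDiagExst} and the discussion preceding the lemma that a pre-admissible diagram which is \emph{not} admissible admits a gluing move producing another pre-admissible diagram with one fewer derived region, and crucially that such a gluing move \emph{strictly decreases} $\Omega$. This monotonicity of $\Omega$ under the gluing process is the engine of the argument.

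First I would fix an $\Omega$-minimal derived diagram $M'$ for $W$; by definition it is pre-admissible. Suppose toward a contradiction that $M'$ is not admissible. Then one of the two conditions of Definition~\ref{def:admisDerDiag} fails, and by Proposition~\ref{prop:gluingProccess} there exist two adjacent derived regions $\Delta_r,\Delta_s$ in $M'$ that can be glued along a common boundary path $\rho$ to form a pre-admissible diagram $M'' = (M'\setminus\{\Delta_r,\Delta_s\})\cup\{\Delta_{r,s}\}$. Now I would invoke the computation given in the paragraph just before the lemma: in passing from $M'$ to $M''$ the label of $\rho$, which contributed to $\ell(\Delta_r)$ and to $\ell(\Delta_s)$ and hence was counted twice in $\Omega(M')$, is no longer a boundary edge of $\Delta_{r,s}$ and is counted zero times in $\Omega(M'')$, while every other region's boundary label is unchanged; therefore $\Omega(M'') = \Omega(M') - 2|\rho| < \Omega(M')$ (here $|\rho|\ge 1$ since $\rho$ contains at least one edge — it is the common boundary of two neighbors). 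Since $M''$ is a pre-admissible derived diagram with the same boundary label $W$, this contradicts the minimality of $\Omega(M')$ among all such diagrams. Hence $M'$ is admissible.

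The only subtlety I would be careful to address is that the definition of $\Omega$-minimal uses a two-stage optimization — first minimize $\Omega$, then maximize the number of derived regions — so I should make sure the contradiction is genuinely with the \emph{first} stage, which it is: $M''$ has strictly smaller $\Omega$, so $M'$ could not have had minimal $\Omega$ at all, irrespective of region counts. (The second clause of the definition is not even needed for this lemma; it only matters later.) I would also note in passing that $|\rho|\ge1$ is exactly the content of the definition of ``neighbors'' given in Section~\ref{vanKampenDiag} — two regions are neighbors precisely when their boundaries share an edge — and that Lemma~\ref{lem:gluingTwoRgns} guarantees the glued object $\Delta_{r,s}$ is again a legitimate (simply connected, spike-free) derived region, so $M''$ is an honest derived diagram.

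I do not expect a serious obstacle here: the whole argument is a one-line minimality contradiction built on the $\Omega$-monotonicity observation, which is already stated in the excerpt, together with Proposition~\ref{prop:gluingProccess}. If anything, the ``hard part'' is purely expository — making sure the reader sees that the gluing move of Proposition~\ref{prop:gluingProccess} is exactly the move under which $\Omega$ drops, so that non-admissibility of an $\Omega$-minimal diagram is immediately self-contradictory.
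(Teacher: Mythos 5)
Your proof is correct and is essentially the paper's own argument: the paper likewise observes that gluing two derived regions (as in Proposition \ref{prop:gluingProccess}) strictly decreases $\Omega$, since the label of the common path is counted twice before and not at all after, so a non-admissible $\Omega$-minimal diagram would contradict minimality. Your extra remarks about the two-stage optimization and $|\rho|\geq 1$ are fine but not needed beyond what the paper states.
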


Next, we show that the derived regions in an $\Omega$-minimal derived diagram are of special type.

\begin{proposition} \label{prop:RegInOmegaMin}
Suppose $M$ is a van Kampen diagram and suppose $M'$ is an $\Omega$-minimal derived diagram over $M$. Let $\Delta$ be an $ij$-type derived region in $M'$, let $W\in\mathcal{W}(x_i,x_j)$ be a boundary label of $\Delta$, and let $\Lambda$ be a (regular) van Kampen diagram of $W$ over $E_{ij}$. Then, $W$ is freely-reduced and $\Lambda$ has connected interior.
\end{proposition}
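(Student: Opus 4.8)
The plan is to prove the two assertions separately, each by contradiction with one of the two defining features of $\Omega$-minimality: ``$W$ freely reduced'' will contradict the minimality of $\Omega(M')$, and ``$\Lambda$ has connected interior'' will contradict the requirement that, among the $\Omega$-minimal pre-admissible derived diagrams with boundary label $W_0:=\partial M'$, the diagram $M'$ has the largest possible number of derived regions. The common tool is surgery on the underlying van Kampen diagram $M$: since $\Delta$, being a derived region, has connected and simply connected interior (Definition \ref{def:derivedDiagram}), we may excise $\mathrm{int}(\Delta)$ and re-fill the resulting disk-shaped hole by any van Kampen diagram over $E_{ij}$ (recall every relator of $E_{ij}$ is a relator of $G$) whose boundary cycle matches the hole; this leaves the outer boundary label $W_0$ unchanged.

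For the first assertion I would suppose $W$ is not freely reduced, say $W=W_1xx^{-1}W_2$ with $x\in\pmX$. Since $\overline{W}=1$ in $E_{ij}$ we also get $\overline{W_1W_2}=1$ in $E_{ij}$, so there is a van Kampen diagram $\Lambda'$ over $E_{ij}$ with boundary label $W_1W_2$. Now excise $\mathrm{int}(\Delta)$; the two backtracking edges of $\partial\Delta$ are then adjacent on the hole with nothing of $\Delta$ between them, so folding them together is a legitimate planar move, after which the hole's boundary reads $W_1W_2$ and we glue $\Lambda'$ into it. Taking for new derived regions the closures of the connected components of $\mathrm{int}(\Lambda')$, and leaving all other derived regions (and their boundary labels) untouched, yields a pre-admissible derived diagram $M''$ with boundary label $W_0$ and with $\Omega(M'')\le\Omega(M')-|W|+|W_1W_2|<\Omega(M')$ --- contradicting $\Omega$-minimality. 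Hence $W$ is freely reduced.

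For the second assertion I would suppose that $\mathrm{int}(\Lambda)$ is disconnected and let $\Lambda_1,\dots,\Lambda_c$ ($c\ge2$) be the closures of its connected components, with spikes pruned; each $\Lambda_k$ is a van Kampen diagram over $E_{ij}$ with connected interior, they pairwise meet only in vertices, and $\sum_k|\partial\Lambda_k|\le|\partial\Lambda|=|W|$. Since the boundary labels of $\Delta$ and $\Lambda$ agree, glue $\Lambda$ into the place of $\Delta$ by the surgery above and take the $\Lambda_k$'s as the new derived regions in place of $\Delta$. The resulting $M''$ is pre-admissible with boundary label $W_0$, it satisfies $\Omega(M'')\le\Omega(M')$, and it has strictly more derived regions than $M'$ (one region replaced by at least two). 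If $\Omega(M'')<\Omega(M')$ this contradicts minimality of $\Omega(M')$; if $\Omega(M'')=\Omega(M')$ it contradicts the maximality of the number of derived regions of $M'$. So $\mathrm{int}(\Lambda)$ is connected.

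The step I expect to be the main obstacle is making the surgery in the first assertion precise: one must check that excising $\Delta$, performing the fold, and re-gluing $\Lambda'$ produces a genuine van Kampen diagram (planar, connected, simply connected) and a bona fide pre-admissible derived diagram --- in particular, that the fold does not produce a spike inside a neighbouring derived region, and, if it does, that pruning that spike only lowers $\Omega$ further and so still gives the contradiction. The degenerate cases --- $W_1W_2$ empty, where one fills the hole with a tree carrying no regions; or $\Lambda'$ or $\Lambda$ carrying extra spikes --- only decrease $\Omega$, so they cause no difficulty.
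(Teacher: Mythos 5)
Your proposal is correct and follows essentially the same route as the paper: excise $\Delta$, glue in a diagram over $E_{ij}$ with matching boundary, take the closures of the interior components as the new derived regions, and contradict the two-tier $\Omega$-minimality (strict drop in $\Omega$ for the free-reduction/spike case, equal $\Omega$ but more derived regions for the disconnected-interior case). The only cosmetic difference is in the first assertion, where you fold away the $xx^{-1}$ and refill with a diagram for $W_1W_2$, while the paper refills with a spiked diagram for $W$ itself; both give the same inequality, and the planar-surgery technicalities you flag are present (and treated no more carefully) in the paper's own argument.
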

\begin{proof}
If $W$ is not freely-reduced then we can construct a van Kampen diagram $\Lambda$ with boundary label $W$ and which has spikes (vertices of valence one). We will therefore show that $\Lambda$ has no spikes and has connected interior. Assume by contradiction that $\Lambda$ has no connected interior or it has spikes. We can do the following `surgery' on $M'$: cut out $\Delta$ and replace it with $\Lambda$. Note that in this process we also change the underling diagram $M$ (because we may change the set of regions in $M$) but the boundary label is left unchanged. However, for brevity, we also denote the new diagram by $M$. Since $\Lambda$ may have non-connected interior and/or may have spikes it is no longer a valid derived region. Thus, let $\Lambda_1,\ldots,\Lambda_k$ be the closures of the connected components of the interior of $\Lambda$. Instead of $\Delta$ we now have $k$ derived regions, $k\geq1$. The edges of $\Lambda$ which are not in the union $\Lambda_1 \cup \cdots \cup \Lambda_k$ will not be part of the boundaries of the new derived regions. Denote the new derived diagram by $M''$. Note that $\Omega(M')\geq\Omega(M'')$ because $\ell(\Delta) = |W| \geq \ell(\Lambda_1)+\cdots+\ell(\Lambda_k)$. If indeed, $k>1$ (i.e., $\Lambda$ had non-connected interior) then we now have strictly more derived regions in contradiction to $M'$ being $\Omega$-minimal diagram. Thus, we can assume that $\Lambda$ has connected interior. If $\Lambda$ had a spike then we once again get a contradiction to the $\Omega$-minimal condition since then $\Omega(M')>\Omega(M'')$ (because the edge emanating from the spike is not counted in the sum of $\Omega(M'')$).
\end{proof}

\begin{corollary} \label{cor:OmegeDiagProp}
Suppose $\Delta$ is a derived region in an $\Omega$-minimal derived diagram and let $W$ be the boundary label of $\Delta$. Then, $W$ is freely-reduced and if $U$ is a (proper) sub-word of $W$ then $U\neq1$ in the group $G$.
\end{corollary}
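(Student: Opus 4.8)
The first assertion needs no new argument: since $M'$ is $\Omega$-minimal it is, in particular, pre-admissible, so $\Delta$ is an $ij$-type derived region for some $i,j$, and Proposition~\ref{prop:RegInOmegaMin} applies directly to give that $W$ is freely reduced. The substance of the corollary is therefore the second assertion, which I would establish by contradiction, exploiting the $\Omega$-minimality of $M'$ in the same spirit as the proof of Proposition~\ref{prop:RegInOmegaMin}.

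Suppose, then, that some non-empty proper subword $U$ of $W$ satisfies $U=1$ in $G$, and write $W=U_1UU_2$ with $U_1U_2\neq\varepsilon$. The first step is to pass from $G$ down to the edge group $E_{ij}$. Since $U$ is a subword of $W\in\mathcal{W}(x_i,x_j)$ we have $U\in\mathcal{W}(x_i,x_j)$, and since for the Shephard groups considered here the $(x_i,x_j)$-edge subgroup of $G$ is isomorphic to $E_{ij}$ (this is part of the analysis of two-generator finite Shephard groups carried out in the appendix), $U=1$ in $G$ forces $U=1$ in $E_{ij}$. As $\Delta$ is a van Kampen diagram over $E_{ij}$ we already know $W=1$ in $E_{ij}$, whence $W':=U_1U_2=1$ in $E_{ij}$ as well, while $|W'|<|W|$.

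The plan is now to construct a competitor $M''$: a pre-admissible derived diagram with the same outer boundary label as $M'$, with $\Omega(M'')\le\Omega(M')$, and with equality only if $M''$ has strictly more derived regions than $M'$ --- either of which contradicts the $\Omega$-minimality of $M'$. Choose van Kampen diagrams $\Sigma$ of $U$ and $\Lambda'$ of $W'$, both over $E_{ij}$; note that $\Sigma$ has at least one region (as $U$ is freely reduced and non-empty) and no spikes (for the same reason). The subword $U$ is read along a subarc of $\partial\Delta$, so $\partial\Delta$ is labelled by $W=U_1UU_2$; in $M'$, replace $\Delta$ by the sub-diagram obtained by wedging $\Sigma$ onto $\Lambda'$ --- identifying a boundary vertex of $\Sigma$ with the vertex of $\partial\Lambda'$ that separates $U_1$ from $U_2$. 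Reading around this new sub-diagram one sees $U_1$, then the boundary loop of $\Sigma$ (spelling $U$), then $U_2$ --- that is, exactly $W$ --- so it can be grafted into the place vacated by $\Delta$; this grafting identifies the two endpoints of the $U$-subarc of $\partial\Delta$, which is harmless (they already represent the same group element, and the underlying complex remains planar and simply connected). The resulting $M''$ is pre-admissible because $\Sigma$ and $\Lambda'$ are $ij$-type, has unchanged outer boundary label, satisfies $\Omega(M'')=\Omega(M')-|W|+|U|+|W'|=\Omega(M')$, and has strictly more derived regions than $M'$ (the single region $\Delta$ has been replaced by $\Sigma$ and $\Lambda'$, with $\Sigma$ contributing at least one). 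The degenerate cases --- $W'$ not freely reduced, $\Sigma$ or $\Lambda'$ with disconnected interior, or $\Lambda'$ collapsing to a tree --- are treated exactly as in the proof of Proposition~\ref{prop:RegInOmegaMin}, by absorbing the superfluous edges and splitting into interior components; this can only decrease $\Omega$ (or keep it equal while further increasing the number of derived regions), so the contradiction survives.

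The step I expect to be the main obstacle is the surgery in the previous paragraph: verifying carefully that the ``wedge'' sub-diagram is a legitimate sub-diagram of some van Kampen diagram, that grafting it in place of $\Delta$ (with the attendant identification of vertices) yields a valid derived diagram whose outer boundary label is unchanged, and that each of the degenerate sub-cases genuinely violates $\Omega$-minimality. This is fiddly bookkeeping, but of the same nature as --- and no harder than --- the surgery already performed in the proof of Proposition~\ref{prop:RegInOmegaMin}; all the conceptual input, namely that $W$ is freely reduced, that $U=1$ in $E_{ij}$, and the length count above, is in place by this point.
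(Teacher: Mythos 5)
Your argument is sound in substance and runs on the same engine as the paper's proof --- contradict $\Omega$-minimality by producing a disconnected-interior filling of the boundary label of $\Delta$ --- but the paper's route is shorter and avoids precisely the step you flag as the main obstacle. Instead of keeping $W=U_1UU_2$ in place and grafting a wedge back into $M'$ (which forces the identification of the two endpoints of the $U$-arc and all the attendant planarity bookkeeping), the paper first passes to a cyclic conjugate of the boundary label, writing it as $UV$ with $V=U_2U_1$. This buys two things. First, $V$ is automatically non-empty and freely reduced: every cyclic conjugate of a boundary label of $\Delta$ is again a boundary label, hence freely reduced by Proposition \ref{prop:RegInOmegaMin}, so $W$ is cyclically reduced and the junction of $U_2$ with $U_1$ cannot cancel; your degenerate cases ($W'=U_1U_2$ cancelling, $\Lambda'$ degenerating to a tree) therefore never arise. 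Second, no surgery on $M'$ is needed at all: wedging a diagram for $U$ and a diagram for $V$ at a single point already yields a van Kampen diagram with boundary label $UV$ and disconnected interior, which contradicts the \emph{statement} of Proposition \ref{prop:RegInOmegaMin} (that every diagram for a boundary label of $\Delta$ has connected interior). The cut-and-replace surgery you re-perform was done once and for all inside the proof of that proposition; to get the contradiction one only has to exhibit a single bad filling. So your proof is correct in outline but re-proves a step it could cite, and the ``fiddly'' grafting is avoidable.

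One caveat on your reduction from $G$ to $E_{ij}$: the claim that the $(x_i,x_j)$-edge subgroup of $G$ is \emph{isomorphic} to $E_{ij}$ is not ``part of the analysis carried out in the appendix'' --- the paper only ever states that the edge subgroup is a homomorphic image of $E_{ij}$, and the appendix studies the groups $E_{ij}$ themselves, not the injectivity of $E_{ij}\to G$. Some such point is needed (by you, to build $\Sigma$ and $\Lambda'$ over $E_{ij}$ so that the new derived regions are $ij$-type and the $\Omega$-count closes; and, implicitly, by the paper's own proof as well, which is silent about it), so you are not worse off than the paper here, but the attribution is inaccurate and the step should be flagged as an assumption rather than cited as proved.
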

\begin{proof}
$W$ is freely-reduced by Proposition \ref{prop:RegInOmegaMin}. If $U$ is a sub-word of $W$ and $U=1$ in the group then after cyclic conjugation we can decompose $W$ into $UV$ such that both $U$ and $V$ are equal to $1$ in the group. Let $\Lambda_1$ and $\Lambda_2$ be two diagrams with boundary labels $U$ and $V$, respectively. By attaching $\Lambda_1$ and $\Lambda_2$ we can form a diagram $\Lambda$ that has non-connected interior which has a boundary label $W$. This once again contradicts Proposition \ref{prop:RegInOmegaMin}.
\end{proof}

To recap, suppose $G$ is a large triangle Shephard group with finite edge-subgroups and $M'$ is an $\Omega$-minimal derived diagram over a diagram $M$. Then, $M'$ is admissible. Also, if $\Delta$ is an $ij$-type derived region in $M'$ with boundary label $W$ then $W$ is freely-reduced (i.e., $W$ is not equal to $1$ non-trivially). \emph{Thus, $M'$ is a $V(6)$ diagram}. Notice that with regard to the above notation, $\lls{x_i}{W} < m_{ii}$ and $\lls{x_j}{W} < m_{jj}$ because $W$ cannot contain a sub-word of the form $x_i^{m_{ii}}$ or $x_j^{m_{jj}}$ which are equal to $1$ in $G$. As we shall see later, the fact that $W$ has no sub-word that is equal to $1$ in $G$ implies that $W$ has a bounded length (a bound depending only on $G$). This will be important to our argument later on.


\section{Structure theorem for $V(6)$ diagrams} \label{sec:structV6}

Before we turn to prove Proposition \ref{prop:LGisBiAutoStruct} we make a small detour to give a useful property of proper $V(6)$ maps (see the definition below). Since we mainly work with derived diagrams we also describe afterward how to use this property in our context.

\begin{definition} \label{def:propV6}
A $V(6)$ map is \emph{proper} if the following additional conditions holds:
\begin{enumerate}
 \item There are no inner vertices of valence two.
 \item If $D$ is a boundary region of $M$ then:
 \begin{enumerate}
  \item The boundary $\partial D$ of $D$ contains at least four edges.
  \item If $\partial D$ contains an inner vertex of valence three then $\partial D$ contains at least six edges.
 \end{enumerate}
\end{enumerate}
\end{definition}

\begin{definition}[Cut Corner] \label{def:CC}
Let $M$ be a proper $V(6)$ van Kampen map. Let $\rho=e_1 \cdots e_k$ a boundary path in $M$. Suppose $D$ is a \emph{proper} boundary region of $M$ and suppose $\rho_D=\partial D \cap \rho$ is the outer boundary of $D$ (i.e., the connected component of $\partial D \cap \partial M$ that contains an edge). Denote by $\ell$ the lowest index such that $i(e_\ell)$ is in $\rho_D$. Denote by $\delta_D$ the rest of the boundary path of $D$ starting at $i(e_\ell)$ (the complement of $\rho_D$). If $i(e_\ell)$ is of valence three then there is a neighbor $E$ of $D$ which its boundary contains $i(e_\ell)$. For this region $E$, let $\rho_E$ be the connected component of $\partial E \cap \rho$ which contains $i(e_\ell)$.

We say that $D$ is a \emph{cut corner contained in $\mu$} if one of the following conditions hold:
\begin{enumerate}
 \item[T1.] $|\delta_D|<|\rho_D|$.
 \item[T2.] $|\delta_D|=|\rho_D|=2$, $\ell>0$, and $i(e_\ell)$ is of valence three.
 \item[T3.] $|\delta_D|=|\rho_D|=3$, $\ell>0$, $i(e_\ell)$ is of valence three, and $E$ has less than six edges.
 \item[T4.] $|\delta_D|=|\rho_D|=3$, $\ell>1$, $i(e_\ell)$ is of valence three, and $|\rho_E|\geq2$.
\end{enumerate}
See Figure \ref{fig:CC} for illustrations of cases 1 through 4.
\begin{figure}[ht]
\centering
\includegraphics[totalheight=0.25\textheight]{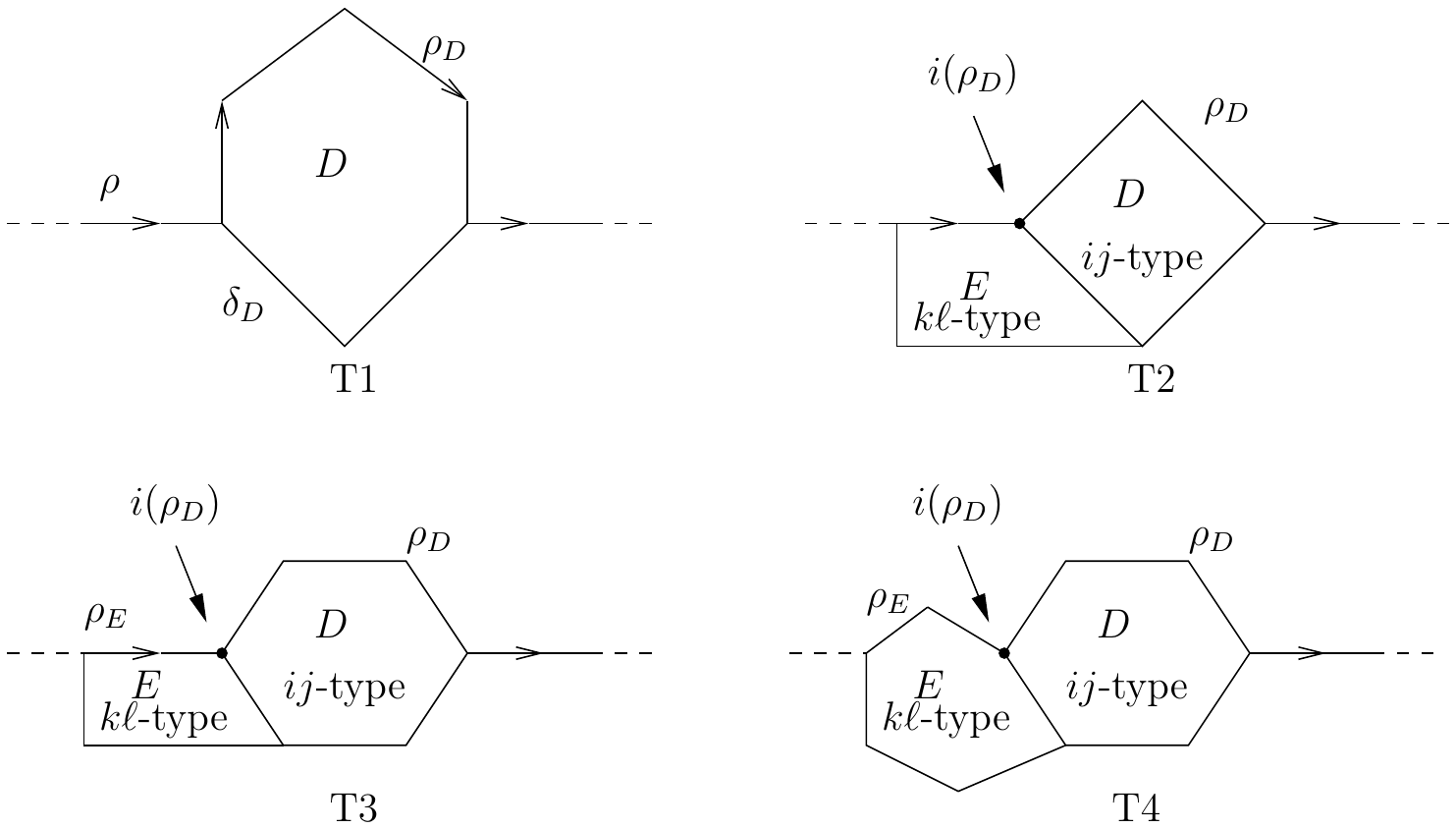}
\caption{Different types of cut corners} \label{fig:CC}
\end{figure}
\end{definition}

\begin{remark} \label{rem:OutEdgAtlstAsInEdgInCC}
Note that if $D$ is a cut corner with $r$ inner edges and $s$ boundary edges then $r\leq s$ and $r\neq s$ only in the T1 case. Consequently, if $r=s$ then $D$ is of type T2, T3, or T4 in which $r\leq 3$. Moreover, if $r=s$ then the vertex $i(e_\ell)$ lays on the boundaries of exactly two regions (since it is of valence three).
\end{remark}

Next theorem link between cut corners, thin maps, and proper $V(6)$ maps. It is one of the main ingredients in the proof of Proposition \ref{prop:LGisBiAutoStruct}.

\begin{theorem}[Theorem 13 of \cite{Wei07}] \label{thm:diagStructCC}
Suppose $M$ is a proper $V(6)$ van Kampen map. Suppose further that the boundary of $M$ has a decomposition $\sigma_1 \delta \sigma_2^{-1} \mu^{-1}$ such that $|\sigma_1|\leq 1$ and $|\sigma_2|\leq 1$. If there is \emph{no} cut corner contained in either $\mu$ or $\delta$ then $M$ is a $(\sigma_1 \delta, \mu \sigma_2)$-thin map.
\end{theorem}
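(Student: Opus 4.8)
The plan is to prove this by induction on the number of regions of $M$, using a peeling argument: at each stage we remove a proper boundary region that is \emph{not} a cut corner, show that this region contributes a single ``column'' to a thin structure, and verify that the remainder is still a proper $V(6)$ map whose boundary admits a decomposition of the required form. The base case (zero or one region) is trivial, since a single region with simple boundary cycle is manifestly thin. For the inductive step, the essential point is to locate a proper boundary region $D$ along $\mu$ or $\delta$ whose outer boundary $\rho_D$ and complementary path $\delta_D$ behave nicely; the hypothesis that no cut corner is contained in $\mu$ or $\delta$ forces (via the negations of conditions T1--T4 and the definition of a proper $V(6)$ map) that $|\delta_D| \geq |\rho_D|$ and that $D$ meets the boundary in a controlled way.

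First I would set up the peeling: travel along $\mu$ (or $\delta$) and consider the first proper boundary region $D$ encountered. Using Remark \ref{rem:OutEdgAtlstAsInEdgInCC} read in reverse — its contrapositive says that if $D$ is \emph{not} a cut corner then its number $r$ of inner edges satisfies $r \geq s$ where $s$ is its number of boundary edges, and moreover the various valence-three configurations T2--T4 are excluded. Combined with the properness conditions of Definition \ref{def:propV6} (at least four boundary edges, at least six if there is an inner valence-three vertex), this pins down that $D$ has exactly two neighbors, its boundary decomposes as outer part $\rho_D$ and inner part $\delta_D$ with $|\delta_D| \geq |\rho_D|$, and the two endpoints of $\rho_D$ are the only vertices shared with the rest of $M$. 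Removing $D$ then yields a map $M^\ast = \overline{M \setminus (D \cup \partial D)}$, which is connected (because $D$ is a \emph{proper} boundary region) and still simply connected. I would check that $M^\ast$ is again a proper $V(6)$ map — the only danger is creating a new inner valence-two vertex or violating the boundary-region edge counts, and one argues this cannot happen given that $D$ had enough inner edges — and that its boundary inherits a decomposition $\sigma_1 \delta^\ast \sigma_2^{-1} \mu^{\ast -1}$, where $\mu^\ast$ (resp.\ $\delta^\ast$) is obtained from $\mu$ (resp.\ $\delta$) by replacing $\rho_D$ with $\delta_D$. One must also confirm no cut corner has been newly created in $\mu^\ast$ or $\delta^\ast$; this uses that the regions adjacent to $D$ have not lost any inner edges and that the only new boundary vertex (if any) is the apex of $\delta_D$.

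By induction $M^\ast$ is $(\sigma_1 \delta^\ast, \mu^\ast \sigma_2)$-thin, so each of its regions has at most two neighbors and meets both $\sigma_1\delta^\ast$ and $\mu^\ast\sigma_2$. Reattaching $D$, I would verify that $D$ itself has at most two neighbors (already established) and that $\partial D$ meets both $\sigma_1\delta$ and $\mu\sigma_2$: its outer boundary $\rho_D$ lies in $\mu$ (or $\delta$) by construction, and its inner boundary $\delta_D$ — which is a boundary path of $M^\ast$ and hence crosses the thin structure of $M^\ast$ — must touch the opposite side. A short argument using Theorem \ref{thm:w6Struct} (connectedness of pairwise intersections of region boundaries, and simplicity of region boundary cycles) ensures the thin structure of $M^\ast$ glues with $D$ consistently. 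The conclusion is that $M$ is $(\sigma_1\delta, \mu\sigma_2)$-thin.

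The main obstacle, I expect, will be the bookkeeping in the inductive step: proving that after peeling $D$ the map $M^\ast$ is still a \emph{proper} $V(6)$ map and, crucially, that no cut corner has appeared along the modified boundary $\mu^\ast$ or $\delta^\ast$. Creating a cut corner would break the induction, and ruling this out requires carefully tracking how the neighbors of $D$ change — specifically that none of them loses an inner edge or acquires a bad valence-three vertex on its outer boundary — and then checking each of the four cut-corner patterns T1--T4 against the new configuration. Handling the degenerate cases where $|\sigma_1|$ or $|\sigma_2|$ shrinks to zero, or where a neighbor of $D$ becomes a spike-free but very short boundary region, will also need care; these are where the hypothesis $|\sigma_1|\le 1$, $|\sigma_2|\le 1$ and the properness lower bounds on edge counts do the real work.
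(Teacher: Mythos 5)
There is a genuine gap, and it sits exactly where your argument claims to get traction from the hypotheses. You assert that if the chosen proper boundary region $D$ is \emph{not} a cut corner, then the negations of T1--T4 together with Definition \ref{def:propV6} ``pin down that $D$ has exactly two neighbors'' and that the endpoints of $\rho_D$ are the only vertices $D$ shares with the rest of $M$. This is false. Negating T1 only gives $|\delta_D|\geq|\rho_D|$, and T2--T4 are negated vacuously as soon as $|\delta_D|=|\rho_D|\geq 4$ or the vertex $i(e_\ell)$ fails to have valence three; for instance a proper boundary region with $|\rho_D|=|\delta_D|=5$ and no valence-three vertex at $i(e_\ell)$ is not a cut corner yet has on the order of five neighbors. (Note also that Remark \ref{rem:OutEdgAtlstAsInEdgInCC} says ``cut corner $\Rightarrow r\leq s$''; the statement you need, ``not a cut corner $\Rightarrow r\geq s$ and only two neighbors,'' is neither its contrapositive nor true.) The absence of cut corners along $\mu$ and $\delta$ is a global hypothesis: it does not locally force any particular boundary region to be a ladder rung, which is why the actual proof (this paper does not prove the theorem; it imports it as Theorem 13 of \cite{Wei07}) proceeds by a curvature/weight count over the whole $V(6)$ map in the style of Juh\'asz's $W(6)$ theory, showing that positive curvature must concentrate either at the short sides $\sigma_1,\sigma_2$ or at a cut corner on $\mu$ or $\delta$, so that in the absence of cut corners the diagram degenerates to a ladder.

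Even granting a peelable region, the induction as you set it up does not transfer thinness back to $M$. If $D$ sits in the middle of $\mu$, then after removal the regions abutting $\delta_D$ meet the new boundary $\mu^\ast$ only along $\delta_D$, which is \emph{interior} in $M$; thinness of $M^\ast$ therefore says nothing about those regions meeting $\mu$ in $M$, and reattaching $D$ does not yield that every region of $M$ touches both $\sigma_1\delta$ and $\mu\sigma_2$. Finally, the step you yourself flag as the main obstacle --- that peeling creates no new cut corner on $\mu^\ast$ or $\delta^\ast$ --- is not addressed at all, and it is essentially the entire content of the theorem; without a quantitative (curvature or counting) mechanism there is no reason the process cannot create cut corners or stall. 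As written, the proposal does not constitute a proof and follows a different, and in its present form unworkable, route from the cited one.
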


Let $G=G(X,\mathcal{C})$ be a large triangles Shephard group where its edge-subgroups are finite.  Suppose we are given an $\Omega$-minimal derived diagram $M'$ over a diagram $M$. $M'$ is a $V(6)$ diagram (Proposition \ref{prop:admissV6}) however for Theorem \ref{thm:diagStructCC} we need a \emph{proper} $V(6)$ map. To solve this we need to define a set of edges for $M'$ that would turn it into a proper $V(6)$ diagram. First,  we take as inner edges for $M'$ all the paths in $M$ that are on the boundaries of two different regions (i.e., paths of the form $\partial \Delta \cap \partial \Lambda$ for two derived regions $\Delta$ and $\Lambda$). We do so since there may not be inner vertices of valence two. For boundary edges of $M'$ we do the following. Let $\Delta$ be a boundary region in $M'$, let $\rho$ is a connected component of $\partial \Delta \cap \partial M$, and let $V$ be the label of $\rho$. Then, $V=V_1 V_2 \cdots V_k$ such that $V_i$ are the syllables of $V$. We may decompose $\rho$ into $\rho = \rho_1 \rho_2 \cdots \rho_k$ such that $\rho_i$ is labeled by $V_i$ (possibly, by introducing vertices of valence two along $\rho$). This specific choice of edges in $M'$ will be called  \emph{syllable-induced edges}. Note that by Appel-Schupp Syllable Length Condition this specific choice of edges induces a proper $V(6)$ structure on $M'$. A syllable-induced set of edges define a set of \emph{syllable-induced vertices} which are the initial and terminal vertices of the syllable-induced edges.

\begin{figure}[ht]
\centering
\includegraphics[totalheight=0.18\textheight]{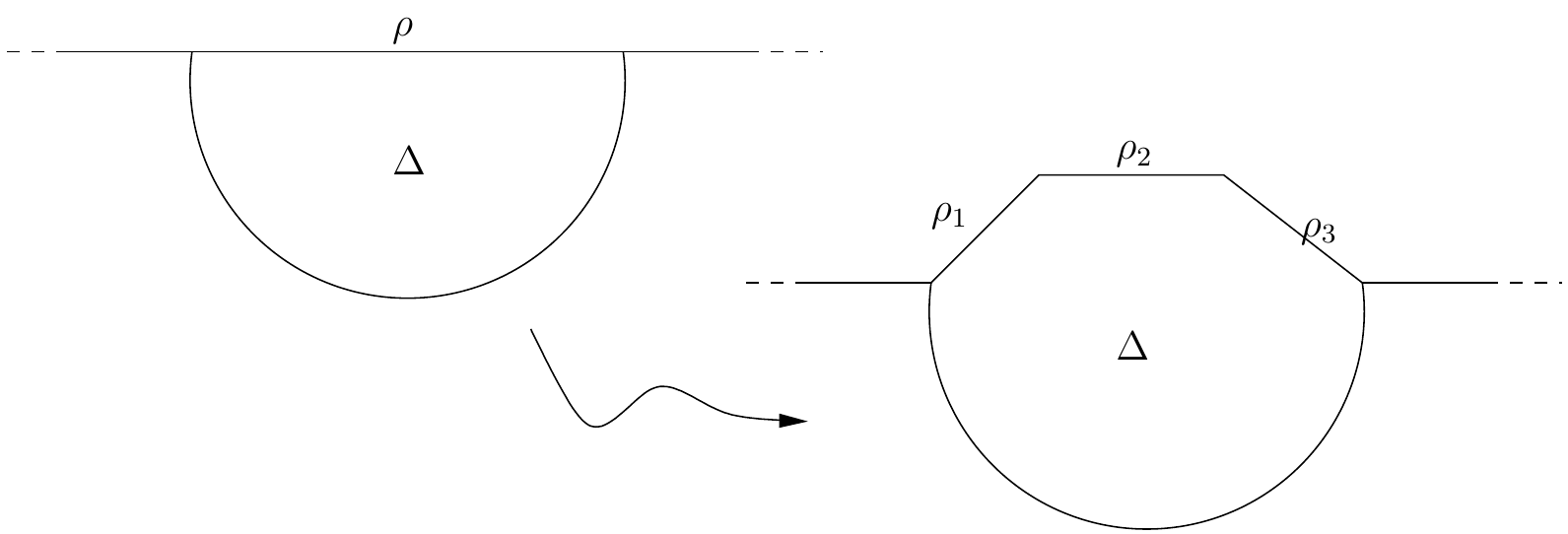}
\caption{Making a diagram a proper $V(6)$ diagram} \label{fig:properV6}
\end{figure}

As it turns out, using syllable-induced edges for considering a $V(6)$ proper diagrams is not enough for our needs later on. Thus, we refine the above construction as follows: suppose $\Delta$ is a proper boundary derived region in a derived diagram $M'$ (as above) and suppose $\rho$ is the boundary path of $\Delta$. In this case we do a \emph{minimal} split of $\rho$ into $\rho = \rho_1 \rho_2 \cdots \rho_k$ (we minimize on the number of elements in the split, i.e., $k$; the exact locations of the splits is unimportant) such that $\Delta$ has at least four edges and if $\partial\Delta$ contains an inner vertex of valence three then $\Delta$ has at least six edges. Namely, we make sure that conditions of proper $V(6)$ diagram hold but we take as few edges as possible. We will refer to this construction as \emph{minimal set of syllable-induced edges}. The only difference between the construction of minimal set of syllable-induced edges and the definition above (of syllable-induced set of edges) is that the number of boundary edges on the boundary of a \emph{proper} boundary regions was reduced.

\begin{remark} \label{rem:MinSylIndEdgSylLenBoundNumEdgs}
Suppose we consider a minimal set of syllable-induced edges on an admissible derived diagram $M'$ and $\Delta$ is a boundary region. Suppose further that $s$ is the number of boundary edges in $\rho$ which is a connected path of $\partial \Delta \cap \partial M$ and $W$ is the label of $\rho$. Then, $s\leq\lp{W}$. The reason is that $s$ may be at most the number of syllables in $W$ by the construction (it may be less, due to the minimality, but not more).
\end{remark}

The reason for the special choice we made regarding boundary edges is the following lemma.

\begin{lemma} \label{lem:SixEdgeCC}
Let $M'$ be an $\Omega$-minimal derived diagram and consider a minimal set of syllable-induced edges. Let $\Delta$ be a cut corner with $r$ inner edges and $s$ boundary edges. Then, $r\leq 3$ and if $r=3$ then $s=3$.
\end{lemma}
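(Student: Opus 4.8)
The plan is to reduce the statement to a short case check that uses only the bound $r \le s$ from Remark~\ref{rem:OutEdgAtlstAsInEdgInCC} together with the extremal property built into the \emph{minimal} set of syllable-induced edges; in particular I do not expect to need the internal structure of the edge-groups, Corollary~\ref{cor:OmegeDiagProp}, or the Appel--Schupp condition for this lemma. First I would unwind what ``minimal syllable-induced edges'' gives for the cut corner $\Delta$. Since a cut corner is, by Definition~\ref{def:CC}, a proper boundary region, the outer arc of $\partial\Delta$ (the part lying on $\partial M$, carrying the $s$ boundary edges) was cut into as few edges as the proper $V(6)$ requirements of Definition~\ref{def:propV6} permit: the total number of edges of $\partial\Delta$, namely $r+s$, must be at least $4$, and at least $6$ if $\partial\Delta$ passes through an inner vertex of valence three, and $s$ was taken minimal subject to this and to $s \ge 1$. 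Hence
\[
s \;=\; \max\{1,\,4-r\} \quad\text{when } \partial\Delta \text{ has no inner vertex of valence three},
\]
and $s = \max\{1,\,6-r\}$ otherwise. (Here I would note that whether $\partial\Delta$ meets an inner valence-three vertex is a property of the fixed inner edges of $M'$, so there is no circularity in using it to pin down $s$.)

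Next I would simply combine this with $r \le s$. If $r \ge 4$ then $s \le \max\{1,\,6-r\} \le 2 < r$, contradicting $r \le s$; so $r \le 3$. If $r = 3$, then the alternative ``$\partial\Delta$ has no inner vertex of valence three'' would force $s = \max\{1,1\} = 1 < 3 = r$, again contradicting $r \le s$; therefore $\partial\Delta$ does pass through an inner vertex of valence three, and then $s = \max\{1,\,6-3\} = 3$. This is exactly $r \le 3$ together with the implication $r = 3 \Rightarrow s = 3$. (It also recovers, from the other direction, Remark~\ref{rem:OutEdgAtlstAsInEdgInCC}'s observation that $r = s$ occurs only for types T2, T3, T4.)

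The only place that needs care --- and the one point I would double-check against the constructions of Section~\ref{sec:structV6} --- is that the quantity minimised by the minimal syllable-induced edges is indeed the \emph{total} edge count $r+s$ of $\partial\Delta$ (inner edges included), with the floor $s \ge 1$ coming from the outer boundary of a boundary region always containing an edge, and that nothing else (e.g.\ the shape of the label along the outer arc) can force $s$ to exceed the displayed value; this is exactly how the minimal refinement was defined in the paragraph leading up to Remark~\ref{rem:MinSylIndEdgSylLenBoundNumEdgs}. Once that is granted, the lemma is immediate. The role of $\Omega$-minimality here is only indirect: it is what guarantees (via Proposition~\ref{prop:admissV6} and the syllable-induced refinement) that $M'$ is a proper $V(6)$ map in the first place, which is what makes the notion of cut corner applicable.
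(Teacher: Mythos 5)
Your argument is correct, but it is organized differently from the paper's. The paper proves the lemma by running through the cut-corner types of Definition~\ref{def:CC}: for types T2, T3, T4 the conclusion is immediate from the definitional equalities $|\delta_D|=|\rho_D|\in\{2,3\}$, and the only work is to exclude a type T1 cut corner with $r\geq3$, which is done qualitatively --- there $r<s$ forces $r+s\geq7>6$ edges on $\partial\Delta$, so one could delete an outer edge while keeping the proper $V(6)$ conditions, contradicting the minimality of the syllable-induced edge set. You instead never look at the types beyond the inequality $r\leq s$ of Remark~\ref{rem:OutEdgAtlstAsInEdgInCC}, and you extract from the minimal-split construction the \emph{exact} value $s=\max\{1,4-r\}$ (resp.\ $\max\{1,6-r\}$ when $\partial\Delta$ meets an inner vertex of valence three), after which everything is arithmetic. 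Both proofs rest on the same two ingredients --- minimality of the boundary-edge count and the relation $r\leq s$ (which itself encodes the type analysis) --- so the difference is mainly one of bookkeeping; what your version buys is a uniform treatment of all four types and an explicit formula for $s$, at the price of needing the stronger claim that the minimum is attained exactly, i.e.\ that the split of the outer arc is constrained only by the edge-count requirements of Definition~\ref{def:propV6} (with $s\geq1$) and that the inner-valence-three condition is determined by the fixed inner edges, independently of the split. You correctly flag and justify exactly this point, and it is consistent with how the minimal set of syllable-induced edges is defined before Remark~\ref{rem:MinSylIndEdgSylLenBoundNumEdgs}, so there is no gap; the paper's type T2--T4 cases, by contrast, need nothing beyond the definitions and so avoid that (mild) extra reliance on the construction.
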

\begin{proof}
Suppose by contradiction that $\Delta$ is a cut corner of type T1 with $r\geq3$. In cut corner of type T1 we have $r<s$ so the number of edges in $\partial \Delta$ is at least $7$ (because $r\geq3$ and $s\geq4$). This situation is clearly not minimal in the number of edges (we can take less edges in the outer boundary of $\Delta$ while still having a proper $V(6)$ diagram). Thus, such condition will contradict the minimality of the set of syllable-induced edges. If $\Delta$ is a cut corner of other type (type T2, type T3, or type T4) then $r\leq3$ so we get that in all cases $r\leq 3$. To show that if $r=3$ then $s=3$ note that: it follows if $\Delta$ is a cut corner of type T3 or of type T4; it follows trivially for type T2 (for which $r=s=2$); it follows for type T1 since then $r<3$ by the contradiction we got above.
\end{proof}

\section{Proof of Bi-Automaticity} \label{proofBiAuto}

After the preparatory section above, in this section we finally turn to prove Theorem \ref{thm:mainApplication} by proving Proposition \ref{prop:LGisBiAutoStruct}. Let $G=G(X,\mathcal{C})$ be a large triangles Shephard group where its edge-subgroups are finite. Recall that for Proposition \ref{prop:LGisBiAutoStruct} we need to show that the language $L_G$ is a bi-automatic structure for $G$ (see the proposition for the definition of $L_G$). The group $G$ and its presentation is fixed throughout this section. If $m_{ij}\neq\infty$ then the edge-subgroup that is generated by $x_i$ and $x_j$ is finite by our assumptions. We thus introduce the following notation: \emph{$\kappa(G)$ is the smallest upper bound on the sizes of finite edge-subgroups in $G$}. Namely, if $m_{ij} \neq \infty$ then the size of the edge-group that is generated by $x_i$ and $x_j$ is bounded from above by $\kappa(G)$.

\begin{lemma}\label{lem:onFiniteGrps}
Suppose $H$ is a finite group that is finitely generated by $Y$. Let $W=y_1 y_2 \cdots y_n$ be an element of $(Y^{\pm 1})^*$ where $n > |H|$. Then, $W$ has a proper sub-word that is equals to $1$ in $H$.
\end{lemma}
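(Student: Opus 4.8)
The plan is to use the standard pigeonhole argument on the sequence of partial products. Consider the prefixes $W(0) = \varepsilon, W(1) = y_1, W(2) = y_1 y_2, \ldots, W(n) = W$ and their images $\overline{W(0)}, \overline{W(1)}, \ldots, \overline{W(n)}$ in $H$. This is a list of $n+1$ elements of $H$, and since $n > |H|$ we have $n + 1 > |H| + 1 > |H|$, so by the pigeonhole principle there are two indices $0 \leq i < j \leq n$ with $\overline{W(i)} = \overline{W(j)}$.

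Given such $i < j$, the sub-word $U = y_{i+1} y_{i+2} \cdots y_j$ satisfies $\overline{W(i)} \cdot \overline{U} = \overline{W(j)} = \overline{W(i)}$ in $H$, hence $\overline{U} = 1$, i.e.\ $U = 1$ in $H$. It remains to check that $U$ is a \emph{proper} sub-word of $W$, i.e.\ that $U \neq W$. Since $n > |H| \geq 1$, we have $n \geq 2$; and since $0 \leq i < j \leq n$ with the pair $(i,j) \neq (0,n)$ forced whenever $n > |H|$ — indeed, if the only repeated pair were $(0,n)$ then $\overline{W(0)}, \ldots, \overline{W(n)}$ would be $n$ distinct elements (dropping one of the two equal endpoints) among $|H|$ possibilities, forcing $n \leq |H|$, contrary to hypothesis. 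Thus we may choose the repeated pair with $(i,j) \neq (0,n)$, so $U$ is a proper sub-word.

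The only mild subtlety — and the single point one has to be careful about — is the properness of the sub-word: a naive application of pigeonhole could return the pair $(i,j) = (0,n)$, which would only say $W = 1$ in $H$ and not give a \emph{proper} sub-word. The counting argument in the previous paragraph disposes of this: strengthening the pigeonhole to use that among $W(0), \ldots, W(n)$ we have strictly more than $|H|$ values means we can find a collision that does not involve both extreme indices simultaneously, so $0 < j$ or $i < n$ (in fact we can even arrange $j - i < n$). No further calculation is needed.
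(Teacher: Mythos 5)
Your proof is correct and is essentially the paper's argument: pigeonhole on the sequence of partial products of $W$. The paper's one-line version avoids your $(i,j)=(0,n)$ complication by applying pigeonhole only to the $n$ nonempty prefixes $y_1,\,y_1y_2,\,\ldots,\,y_1\cdots y_n$ (already more than $|H|$ of them), so the resulting sub-word $y_{i+1}\cdots y_j$ with $i\geq 1$ is automatically proper; your extra counting step handles the same point correctly, just less economically.
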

\begin{proof}
By the pigeonhole principle the series $y_1,\, y_1 y_2,\, \ldots,\, y_1y_2 \cdots y_n$ contains two elements which are equal in $H$.
\end{proof}

Using Lemma \ref{lem:onFiniteGrps} we can now formulate a strong bound on derived regions.

\begin{corollary}
If $M'$ is an $\Omega$-minimal derived diagram over $G$ and $\Delta$ is a derived region in $M'$ with boundary label $U$ then $|U|\leq\kappa(G)$.
\end{corollary}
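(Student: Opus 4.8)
The plan is to derive the bound from Corollary~\ref{cor:OmegeDiagProp} by transporting the ``no trivial proper sub-word'' property from $G$ to the relevant finite edge-subgroup and then invoking the pigeonhole bound of Lemma~\ref{lem:onFiniteGrps}.

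First I would pin down the type of $\Delta$. Since $M'$ is $\Omega$-minimal it is admissible, hence pre-admissible, so $\Delta$ is an $ij$-type derived region for some $i\neq j$, and admissibility also gives that $\Delta$ is proper. In the principal case $\Delta$ contains a regular region whose boundary label involves both $x_i$ and $x_j$; but the only relators of $G$ mentioning two \emph{distinct} generators are Artin relations, which exist only when $m_{ij}\neq\infty$. Hence there is an edge between $x_i$ and $x_j$ in the Shephard graph, and the $(x_i,x_j)$-edge-subgroup $H\leq G$ is finite with $|H|\leq\kappa(G)$ by the standing hypothesis. (The degenerate possibility, that $\Delta$ is an entire connected component of $M$ whose regular regions are all power relators, either yields an empty boundary label or is handled identically after picking an edge at the single generator involved; I would dispatch it in one sentence.) In particular $U\in\mathcal{W}(x_i,x_j)$, so $U$ is a word over $\{x_i,x_j\}^{\pm1}$.

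Next I would apply Corollary~\ref{cor:OmegeDiagProp}: $U$ is freely reduced and no proper sub-word of $U$ equals $1$ in $G$. Since $H$ is a subgroup of $G$, a word over $\{x_i,x_j\}^{\pm1}$ representing $1$ in $H$ represents $1$ in $G$ as well; therefore no proper sub-word of $U$ equals $1$ in $H$ either. Now I would invoke the contrapositive of Lemma~\ref{lem:onFiniteGrps}, taking the finite group to be $H$ with generating set $Y=\{x_i,x_j\}$ and the word to be $W=U$: were $|U|>|H|$, the word $U$ would contain a proper sub-word trivial in $H$, which we have just excluded. Hence $|U|\leq|H|\leq\kappa(G)$, which is the claim.

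The argument is essentially bookkeeping, so the only real point requiring care is the second step of the first paragraph, namely certifying that the $ij$-type region $\Delta$ is actually governed by a \emph{finite} edge-subgroup rather than an infinite one; this is exactly where properness of derived regions in an admissible diagram is used, and where one should double-check that the Corollary~\ref{cor:OmegeDiagProp} hypothesis (``every proper sub-word is nontrivial in $G$'') is phrased about $G$ itself and so survives passage along the inclusion $H\hookrightarrow G$.
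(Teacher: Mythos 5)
Your argument is correct and follows the same route as the paper: apply Corollary~\ref{cor:OmegeDiagProp} to conclude that no proper sub-word of $U$ is trivial in $G$, then use the pigeonhole bound of Lemma~\ref{lem:onFiniteGrps} in the finite edge-subgroup to get $|U|\leq\kappa(G)$. The extra bookkeeping you supply (identifying the $ij$-type of $\Delta$, transporting triviality along the inclusion of the edge-subgroup into $G$) is exactly what the paper's two-line proof leaves implicit, so there is no substantive difference.
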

\begin{proof}
By Corollary \ref{cor:OmegeDiagProp} we have that $U$ does not contain proper sub-words that are equal to $1$ in $G$. Hence, it follows from Lemma \ref{lem:onFiniteGrps} that the length of $U$ is at most $\kappa(G)$.
\end{proof}

Before we arrived to the heart of the proof of Proposition \ref{prop:LGisBiAutoStruct}, we need several technical results. The first two are properties of finite Shephard groups on two generators; the proofs of which will be given in the appendix. The reader may want to recall the definition of $E_{ij}$ groups (see the discussion before Definition \ref{def:ASSyllLenCond}) and the definition of $\lls{x}{W}$ (see Notation \ref{not:standardNot} - part \ref{not:llsNum}).

\begin{lemma} \label{lem:TechPropOfTwoGenGrp}
Let $1 \leq i,j \leq n$ and let $U,V\in\mathcal{W}(x_i,x_j)$ such that the following properties hold: $UV=1$ in $E_{ij}$ non-trivially and also $\lls{x_i}{U}\leq\frac{1}{2}m_{ii}$ and $\lls{x_j}{U}\leq\frac{1}{2}m_{jj}$. Then,
\begin{enumerate}
 \item If $\lp{U} \leq m_{ij}$ then $|U|\leq |V|$.
 \item If $\lp{U} < m_{ij}$ then $|U| < |V|$.
\end{enumerate}
\end{lemma}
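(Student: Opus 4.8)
The plan is to work inside the finite group $E_{ij}$ and to reduce the statement to a counting argument about the structure of finite Shephard groups on two generators. First I would invoke the appendix results on finite two-generator Shephard groups: such a group $E_{ij}$ has a normal form (or at least a well-understood set of geodesics) in which every element is written as an alternating word in $x_i$ and $x_j$ of syllable length at most $m_{ij}$, with each $x_i$-syllable of length $< m_{ii}$ and each $x_j$-syllable of length $< m_{jj}$; moreover there is a unique ``longest element'' (the image of the central element $\br{x_i,x_j}{m_{ij}}$ under the analogy with Coxeter groups) that bounds the lengths of geodesics. The hypothesis $\lls{x_i}{U}\le\frac12 m_{ii}$ and $\lls{x_j}{U}\le\frac12 m_{jj}$ together with $\lp{U}\le m_{ij}$ is exactly the condition guaranteeing that $U$ is itself geodesic in $E_{ij}$ (no syllable can be shortened via a relator, and no two syllables can be merged).

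Next, since $UV=1$ in $E_{ij}$ non-trivially, we have $\overline{V}=\overline{U}^{-1}$ in $E_{ij}$, so $|V|$ is at least the geodesic length of $\overline{U}^{-1}$, which equals the geodesic length of $\overline{U}$, which (by the previous paragraph) equals $|U|$ when $U$ is geodesic. This would give part (1) immediately: $|U|=\mathrm{geo}(\overline{U})=\mathrm{geo}(\overline{V}^{-1}\cdot 1)\le|V|$, using that $V$ represents $\overline{U}^{-1}$ and $U$ is geodesic so its length realizes the geodesic length. For part (2), I would argue that when $\lp{U}<m_{ij}$ the element $\overline{U}$ is not the longest element of $E_{ij}$ and is not ``palindromic-closed'' in the relevant sense, so that $U$ is the \emph{unique} geodesic representative of $\overline{U}$ up to the obvious moves — hence $V$, which is a \emph{different} word representing $\overline{U}^{-1}$ (it is not freely reducible to the reverse-inverse of $U$, since $UV=1$ non-trivially means $V$ is not freely $U^{-1}$), must be strictly longer. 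Concretely: if $|V|=|U|$ then $V$ would also be geodesic for $\overline{U}^{-1}$, and by the uniqueness of geodesics in the range $\lp{\,\cdot\,}<m_{ij}$ (this is where the strict inequality is used — at $\lp{U}=m_{ij}$ the central element creates genuinely distinct geodesics) we would get $V=U^{-1}$ as words, contradicting that $UV=1$ only non-trivially.

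I expect the main obstacle to be establishing the precise uniqueness/rigidity statement for geodesics in $E_{ij}$ in the subcritical syllable-length range, and pinning down exactly why $\lp{U}=m_{ij}$ is the threshold where uniqueness fails. This is genuinely a fact about the finite Shephard groups $E_{ij}$ and is the reason the paper defers these lemmas to the appendix: one needs the classification of Shephard–Todd to know $|E_{ij}|$ is finite and to have an explicit enough handle on its word problem. In the write-up I would cite the corresponding appendix lemma describing geodesics and the longest element of $E_{ij}$, and then the two parts of Lemma~\ref{lem:TechPropOfTwoGenGrp} follow by the short length-comparison arguments sketched above. A secondary point to be careful about is the interplay between ``freely reduced'' and ``non-trivially equal to $1$'': I must make sure that the hypothesis $UV=1$ \emph{non-trivially} is used correctly to conclude $V\neq U^{-1}$ as words (after free reduction), which is what drives the strict inequality in part (2).
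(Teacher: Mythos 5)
There is a genuine gap here: your argument is essentially circular. The ``appendix results'' you invoke --- a geodesic normal form for finite two-generator Shephard groups, a unique longest element playing the role of the Coxeter $w_0$, and uniqueness of geodesics below syllable length $m_{ij}$ --- do not exist in the paper; the appendix \emph{is} the proof of this lemma, and it contains no such structure theory. Moreover, the two facts your outline rests on are restatements of the lemma itself. Under the stated hypotheses, part (1) is (up to free-reduction bookkeeping) exactly the assertion that $U$ is geodesic in $E_{ij}$, since a strictly shorter representative $V$ of $\overline{U}^{-1}$ automatically makes $UV=1$ non-trivially; and part (2) is exactly the rigidity assertion that, in the subcritical range, no word of length $|U|$ other than one freely equal to $U^{-1}$ can represent $\overline{U}^{-1}$. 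So writing ``the hypothesis is exactly the condition guaranteeing that $U$ is geodesic'' and ``by the uniqueness of geodesics in the range $\lp{\,\cdot\,}<m_{ij}$'' assumes the conclusions rather than proving them, as you half-concede when you call the rigidity statement the ``main obstacle''. The reduction skeleton (geodesicity gives (1), rigidity gives (2)) is fine, but it carries no content until those two facts are established.

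A soft argument at this level of generality cannot succeed, because the rigidity you want is delicate precisely at the boundary the hypotheses allow. The Coxeter longest-element/exchange machinery does not transfer (the generators are not involutions), and in $\mathcal{S}(2,4,4)$, i.e.\ $a^2=b^4=1$, $abab=baba$, the words $b^{2}a$ and $b^{-2}a^{-1}$ are distinct, non-freely-equal words of the same length representing the same element; taking $U=ab^{2}$ (so $\lls{a}{U}=1$, $\lls{b}{U}=2=\tfrac12 m_{jj}$, $\lp{U}=2<m_{ij}$) and $V=b^{2}a$ gives $UV=ab^{4}a=1$ non-trivially with $|U|=|V|$. Thus wholesale ``uniqueness of geodesics'' is simply false when a syllable of length exactly $\tfrac12 m_{jj}$ occurs, and any correct treatment must track the exponent arithmetic case by case (this boundary configuration also shows the strict inequality in part (2) needs care as stated, a point your proposal inherits without noticing). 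This is exactly why the paper's proof proceeds through the Shephard--Todd classification of the finite $E_{ij}$: the family $(r,s,2)$ is $\mathbb{Z}_r\times\mathbb{Z}_s$, the Coxeter case $(2,2,t)$ is quoted from Appel--Schupp, the family $(2,s,4)$ is handled by identifying $E_{ij}$ with $(\mathbb{Z}_s\times\mathbb{Z}_s)\ltimes\mathbb{Z}_2$ and comparing exponent sums directly, and the six sporadic types are reduced to finitely many inequalities verified by computer. Your proposal replaces all of this with an unproven structure theorem, so it does not yet constitute a proof.
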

\begin{proof}
See the appendix.
\end{proof}

\begin{lemma} \label{lem:ThreeSylbGeoSpecialCase}
Consider $U$ and $V$ as in Lemma \ref{lem:TechPropOfTwoGenGrp}. We make the following assumptions:
\begin{enumerate}
 \item $UV$ is freely-reduced and cyclically reduced (as written).
 \item $\lls{x_i}{U}\leq\frac{1}{2}m_{ii}$ and $\lls{x_j}{U}\leq\frac{1}{2}m_{jj}$.
 \item $\lls{x_i}{V}\leq\frac{1}{2}m_{ii}$ and $\lls{x_j}{V}\leq\frac{1}{2}m_{jj}$.
 \item If $U$ ends with $x_i^{p_1}$ and $V$ starts with $x_i^{p_2}$ then $|2p_1 + p_2| < m_{ii}$.
 \item Previous part holds if we replace the roles of $U$ and $V$ and if we replace $x_i$ with $x_j$ (and thus $m_{ii}$ with $m_{jj}$).
 \item $\lp{U}=m_{ij}=3\leq\lp{V}$.
 \item $|U|=|V|$.
\end{enumerate}
Then,
\begin{enumerate}
 \item $U$ has the form $U=a^{\eps_1} U' a^{\eps_2}$ where $a\in\Set{x_i,x_j}$ and $\eps_1,\eps_2 \in \Set{-1,1}$.
 \item $V$ has the form $V=a^{\eps_1} V' a^{\eps_2}$ where $a\in\Set{x_i,x_j}$ and $\eps_1,\eps_2 \in \Set{-1,1}$.
 \item If $U$ starts with $x_i^{\pm1}$ then $V$ starts with $x_j^{\pm1}$ or vice versa.
\end{enumerate}
\end{lemma}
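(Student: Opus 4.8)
The plan is to exploit the rigidity of the seven hypotheses by working inside the finite group $E_{ij}$, using the structure available when $m_{ij}=3$: the Artin relation reads $x_ix_jx_i=x_jx_ix_j$; the element $\Delta\define x_ix_jx_i=x_jx_ix_j$ satisfies $\Delta x_i\Delta^{-1}=x_j$ and $\Delta x_j\Delta^{-1}=x_i$ (a two-line check), so $\Delta^2=(x_ix_j)^3$ is central; and the same relation yields the rearrangements $x_i^{-1}x_jx_i=x_jx_ix_j^{-1}$ and $x_ix_jx_i^{-1}=x_j^{-1}x_ix_j$ together with their inverses. Since hypothesis (6) gives $\lp{U}=m_{ij}=3$ exactly, I would first write $U=a^{p_1}b^{p_2}a^{p_3}$ with $\{a,b\}=\{x_i,x_j\}$ and $p_1,p_2,p_3\ne0$. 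In this notation conclusion (1) is precisely the claim $|p_1|=|p_3|=1$ (so that $U'=b^{p_2}$), while conclusions (2),(3) say that $V$ also begins and ends with unit-length syllables in one common generator, and that this generator differs from $a$.

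First I would pin down the lengths. Since $UV=1$ in $E_{ij}$ non-trivially and $UV$ is cyclically reduced (hypothesis (1)), the cyclic conjugate $VU$ is freely reduced and equals $1$ in $E_{ij}$ non-trivially, so Lemma~\ref{lem:TechPropOfTwoGenGrp} applies to $(U,V)$ and --- when $\lp{V}\le m_{ij}$ --- also to $(V,U)$, hypotheses (2),(3) supplying the required syllable bounds. Part (1) of that lemma gives $|U|\le|V|$, so hypothesis (7) makes it an equality; a companion length estimate (also part of the appendix toolkit) rules out $\lp{V}>m_{ij}$, and then the symmetric application gives $|V|\le|U|$ and hence $\lp{V}=m_{ij}$, since part (2) forbids $\lp{V}<m_{ij}$. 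The heart of the matter is then to show that equality $|U|=|V|$ forces $|p_1|=|p_3|=1$: the key observation is that \emph{no shortening move can be available} on the cyclic word $UV$ --- hypotheses (2),(3) forbid a reduction inside any syllable through the torsion relators $x_i^{m_{ii}},x_j^{m_{jj}}$, hypotheses (4),(5) forbid a reduction across either seam of $UV$, and, $m_{ij}$ being $3$, the only remaining relator, the braid relator, acts on a three-letter block $x_i^{\pm1}x_j^{\pm1}x_i^{\pm1}$ length-neutrally. Tracing the length bookkeeping that underlies Lemma~\ref{lem:TechPropOfTwoGenGrp}, a three-syllable word $U$ that has the same length as a word representing $\overline{U}^{-1}$ yet admits no shortening must have unit-length first and last $a$-syllables; this is conclusion (1), and running the same analysis with $U$ and $V$ interchanged gives conclusion (2) (once $\lp{V}=3$, the outer syllables of $V$ automatically lie in one common generator, so only their unit length is at issue).

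For conclusion (3) I would argue by contradiction. Suppose $V$ begins with the same generator $a$ as $U$. By conclusions (1),(2) we may write $U=a^{\eps_1}b^{p_2}a^{\eps_2}$ and $V=a^{\delta_1}b^{q}a^{\delta_2}$ with $\eps_1,\eps_2,\delta_1,\delta_2\in\{-1,1\}$. Now $\overline{V}=\overline{U}^{-1}$, and the word $U^{-1}=a^{-\eps_2}b^{-p_2}a^{-\eps_1}$ is a three-syllable representative of $\overline{U}^{-1}$ of length $|U|=|V|$; thus $V$ and $U^{-1}$ are two three-syllable, length-$|U|$ representatives of the same element of $E_{ij}$, both beginning and ending with $a$. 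Invoking the geodesic description of $E_{ij}$, the only way two such representatives can agree on their leading generator is for them to coincide as words, i.e.\ $V=U^{-1}$ --- but then $UV$ reduces freely, contradicting hypothesis (1). (When $V\ne U^{-1}$, the identity $\overline{V}=\overline{U}^{-1}$ is necessarily witnessed by one of the rearranged braid moves above, which replaces a three-letter block beginning with $a^{\pm1}$ by one beginning with $b^{\pm1}$, so $V$ begins with $b\ne a$; incidentally this also forces $|p_2|=1$, which is more than conclusion (1) asserts.) This establishes conclusion (3).

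The step I expect to be the main obstacle is the length analysis in $E_{ij}$: making precise when the bound $|U|\le|V|$ of Lemma~\ref{lem:TechPropOfTwoGenGrp} is attained, ruling out $\lp{V}>m_{ij}$, and the ``distinct short representatives cannot share a leading generator'' statement used above. All three are exactly where the torsion relators interact with the braid relator, and they rest on an explicit understanding of geodesics in the finite groups $E_{ij}$ with $m_{ij}=3$ --- which are $S_3$ when $m_{ii}=m_{jj}=2$ (here in fact $m_{ii}=2\iff m_{jj}=2$) and the finitely-parametrised groups with $m_{ii},m_{jj}\ge3$ and $\tfrac1{m_{ii}}+\tfrac1{m_{jj}}>\tfrac13$. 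This is precisely the appendix machinery already needed for Lemma~\ref{lem:TechPropOfTwoGenGrp}; once it is in hand, the syllable bookkeeping controlled by hypotheses (2)--(5) and the free-reducedness argument for conclusion (3) are routine.
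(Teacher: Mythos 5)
Your proposal rests on two intermediate claims that are false, and both are contradicted by the data in the paper's own appendix. First, you read conclusion (1) as the assertion $|p_1|=|p_3|=1$ (unit-length outer syllables of $U$); in fact conclusion (1) only says that the first and last \emph{letters} of $U$ are powers of a common generator, which is automatic once $\lp{U}=3$, whereas the strengthened version you try to prove fails: the paper's exhaustive search in $\mathcal{S}(5,5,3)$ produces, e.g., $U=a\,b^{2}a^{-2}$, $V=b^{-1}a^{-1}b\,a^{-1}b$, satisfying all seven hypotheses, with last $a$-syllable of length $2$. Second, you claim a ``companion length estimate'' rules out $\lp{V}>m_{ij}$ and hence forces $\lp{V}=3$; the same example has $\lp{V}=5>3$, and indeed Lemma \ref{lem:TechPropOfTwoGenGrp} with the roles of $U$ and $V$ exchanged is only applicable when $\lp{V}\leq m_{ij}$, which you cannot assume. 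Since your derivations of conclusions (2) and (3) presuppose that $V$ is a three-syllable word with unit outer syllables (you write $V=a^{\delta_1}b^{q}a^{\delta_2}$), they collapse with these claims. A further structural weakness is the ``no shortening move is available, hence the word is rigid/geodesic'' step: $E_{ij}$ is a finite group far from small cancellation, so equality of lengths cannot be analysed by single applications of the defining relators, and the appeal to ``the geodesic description of $E_{ij}$'' to conclude $V=U^{-1}$ is not substantiated.

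For comparison, the paper's proof is a finite case analysis: since $m_{ij}=3$ is odd, finiteness of $E_{ij}$ forces $m_{ii}=m_{jj}$ and $(r,s,t)\in\Set{(2,2,3),(3,3,3),(4,4,3),(5,5,3)}$. For $(2,2,3)$ and $(3,3,3)$ the syllable bounds give $\lp{U}=|U|$ and $\lp{V}=|V|$, so $|U|=|V|=3$ and the conclusions are immediate; for $(4,4,3)$ and $(5,5,3)$ the finitely many candidate pairs $(U,V)$ are enumerated by computer, yielding exactly the solutions quoted above (which satisfy the stated conclusions but not your stronger ones). If you want a conceptual, computer-free argument, it would have to accommodate non-unit outer syllables and $\lp{V}>3$, which your current outline does not.
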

\begin{proof}
See the appendix.
\end{proof}

Next lemmas deal with properties of $\Omega$-minimal derived diagrams in anticipation of the proposition that follows. The first two show why we would be able to assume the conditions of Lemma \ref{lem:TechPropOfTwoGenGrp} (see the remark that follows). The last one analyze a boundary label around a vertex which has valance three in the derived diagram.

\begin{lemma} \label{lem:noHighPowerInOmgMinDerDiag}
Let $M'$ be an $\Omega$-minimal derived diagram and let $\Delta$ and $\Lambda$ be two neighboring derived regions in $M'$. Let $\rho = \partial \Delta \cap \partial \Lambda$. By Observation \ref{obs:derRgnInAdmisDerDiag} - part \ref{obs:derRgnInAdmisDerDiag:1}, the label of $\rho$ is $x_i^p$ for some generator $x_i$. Then, $|p|\leq\frac{1}{2}m_{ii}$.
\end{lemma}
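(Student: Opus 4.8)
The plan is to argue by contradiction: suppose $\rho$ is labelled $x_i^p$ with $|p| > \frac12 m_{ii}$, and show that this lets us build a competing derived diagram with the same boundary label but smaller $\Omega$, contradicting $\Omega$-minimality. First I would recall that $\Delta$ is an $ij$-type region and $\Lambda$ is (say) an $i\ell$-type region, with $\rho$ a maximal $x_i$-syllable read along the boundary of each (by Observation \ref{obs:derRgnInAdmisDerDiag} - part \ref{obs:derRgnInAdmisDerDiag:1}, the label is a power of a single generator $x_i$, and since $M'$ is pre-admissible there is no adjacent $x_i$-syllable on the other side). The key elementary fact is that in $E_{ij}$ (indeed already in the cyclic group $\langle x_i \mid x_i^{m_{ii}}=1\rangle$) one has $x_i^p = x_i^{p'}$ where $p' \equiv p \pmod{m_{ii}}$ and $|p'| \le \frac12 m_{ii}$; in particular $|p'| < |p|$ whenever $|p| > \frac12 m_{ii}$.

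The main step is the surgery. Having chosen $p'$ with $x_i^{p'}=x_i^{p}$ in $G$ and $|p'|<|p|$, I would replace the shared path $\rho$ (label $x_i^p$) by a new path $\rho'$ (label $x_i^{p'}$) lying in the interior of $M'$, absorbing the ``defect'' $x_i^{p-p'}$ into one of the two regions, say $\Delta$: concretely, the new $\Delta$ has boundary label obtained from the old one by replacing the syllable $x_i^p$ with $x_i^{p'}$, which is still a word in $\mathcal{W}(x_i,x_j)$ equal to $1$ in $E_{ij}$ (we have only multiplied the old boundary word by $x_i^{p'-p}=1$ in $G$, read at the appropriate cyclic position), so $\Delta$ remains a valid $ij$-type derived region; similarly the new $\Lambda$ has its $x_i^p$-syllable replaced by $x_i^{p'}$, still equal to $1$ in $E_{i\ell}$. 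This produces a new derived diagram $M''$ with the same boundary label $W$ and with
\[
\Omega(M'') = \Omega(M') - 2\bigl(|p|-|p'|\bigr) < \Omega(M'),
\]
since the syllable $x_i^p$ was counted once on $\partial\Delta$ and once on $\partial\Lambda$. One must check $M''$ is still pre-admissible (each region is still $ij$-type for the same $i,j$, unchanged), so this contradicts the $\Omega$-minimality of $M'$.

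The main obstacle I expect is technical bookkeeping around the surgery when $\rho$ is not the only component of $\partial\Delta \cap \partial\Lambda$, or when shortening $x_i^p$ to $x_i^{p'}$ causes the new region's boundary word to fail to be freely reduced or to acquire a new adjacency that merges two $x_i$-syllables — but Theorem \ref{thm:w6Struct} (part 2, connectedness of the intersection of boundary paths) handles the first concern, and a free reduction of the new boundary word only decreases $\Omega$ further, so it cannot destroy the contradiction. A secondary point to verify is that after the replacement the new $\Delta$ still genuinely is a van Kampen diagram over $E_{ij}$ (i.e. the substituted word still bounds a disk), which follows since $x_i^{p'-p}$ is itself a product of conjugates of the relator $x_i^{m_{ii}}$, so one may glue in the corresponding small disk and then pass to a minimal such diagram; none of this changes the boundary label of $M''$ or spoils pre-admissibility.
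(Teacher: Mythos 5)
Your proposal is correct and is essentially the paper's own argument: the paper also replaces the shared path labeled $x_i^p$ by a path labeled with the complementary (shorter) power of $x_i$, equal to it in $G$, and notes that this would strictly decrease $\Omega(M')$, contradicting $\Omega$-minimality. Your additional checks (pre-admissibility, validity of the modified regions as diagrams over the edge subgroups) are the details the paper leaves to a figure, so the two arguments coincide.
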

\begin{proof}
Notice that $x_i^p=x_i^{m_{ii}-p}$ in $G$. Hence, it follows that if $p>\frac{1}{2}m_{ii}$ then we can `fix' $\rho$ such that it would be labeled by $x_i^{m_{ii}-p}$ (See figure \ref{fig:edgeLen}) and thus reduce $\Omega(M')$. This move is impossible if we assume that $\Omega(M')$ is minimal so consequently we get that $p\leq\frac{1}{2}m_{ii}$. Similarly, if $p<-\frac{1}{2}m_{ii}$.

\begin{figure}[ht]
\centering
\includegraphics[totalheight=0.18\textheight]{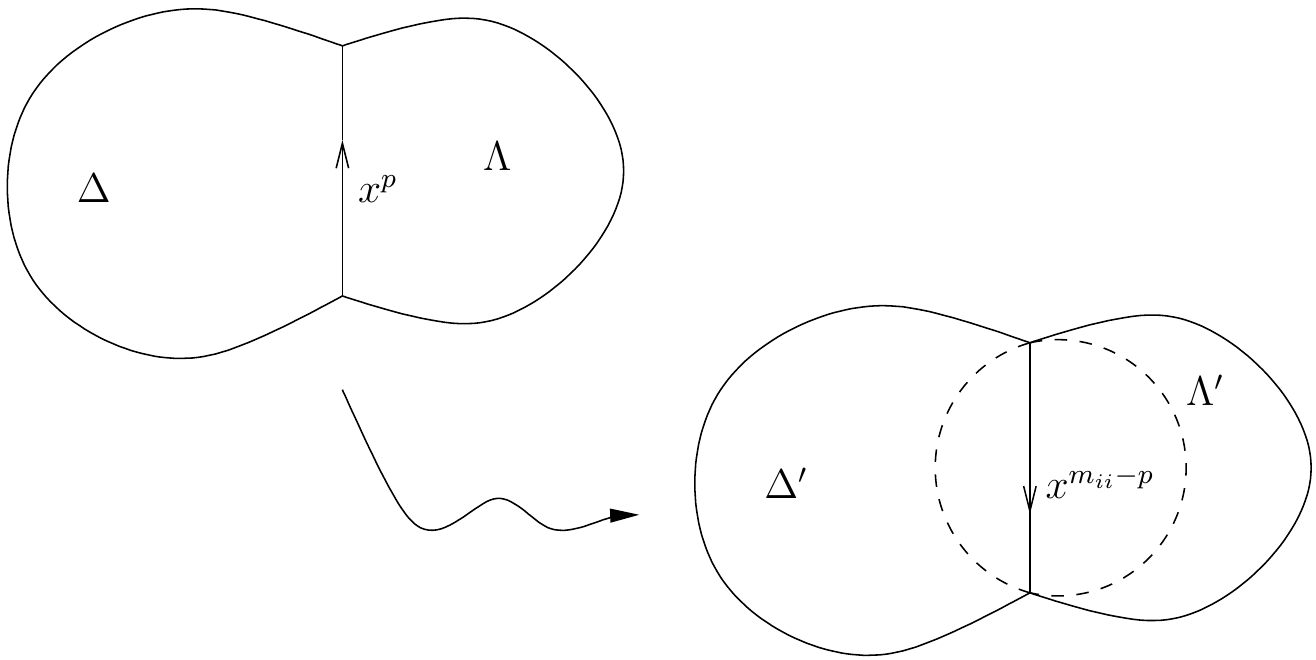}
\caption{Replacing edge labeled by $x_i^p$ with edge labeled by
$x_i^{m_{ii}-p}$} \label{fig:edgeLen}
\end{figure}
\end{proof}

\begin{lemma} \label{lem:noHighPowerInOmgMinDerDiagBoundCase}
Let $M'$ be an $\Omega$-minimal derived diagram and let $\Delta$ and $\Lambda$ be two neighboring derived regions in $M'$. Let $\rho=\rho_1 v \rho_2$ be a path in $\partial \Delta$ such that $\rho_1 = \partial \Delta \cap \partial \Lambda$, $\rho_2$ is a boundary path, and $v$ is a vertex only on the boundaries of $\Delta$ and $\Lambda$. Assume that the label of $\rho_1$ is $x_i^{p_1}$ and the label of $\rho_2$ is $x_i^{p_2}$ (for some generator $x_i$). Then, $|2p_1 + p_2|\leq m_{ii}$.
\end{lemma}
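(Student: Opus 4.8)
The plan is to argue by contradiction, in the spirit of Lemma \ref{lem:noHighPowerInOmgMinDerDiag}: assuming $|2p_1+p_2|>m_{ii}$, I will construct a pre-admissible derived diagram $M''$ with the same boundary and boundary label as $M'$ but with $\Omega(M'')<\Omega(M')$, contradicting $\Omega$-minimality. The surgery producing $M''$ is local near $v$; it transfers a \emph{corner} from the derived region $\Delta$ to the derived region $\Lambda$, the new feature compared with Lemma \ref{lem:noHighPowerInOmgMinDerDiag} being that one of the two syllables ($\rho_2$) lies on the boundary of the map.

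First I would assemble the elementary bookkeeping. Since $\rho_1\rho_2$ is a sub-path of $\partial\Delta$ and the boundary label of $\Delta$ is freely reduced (Corollary \ref{cor:OmegeDiagProp}), the exponents $p_1$ and $p_2$ have the same sign; after inverting all labels if necessary, assume $p_1,p_2>0$. Applying Lemma \ref{lem:noHighPowerInOmgMinDerDiag} to $\rho_1=\partial\Delta\cap\partial\Lambda$ gives $p_1\le\tfrac12 m_{ii}$, and since $x_i^{p_1+p_2}$ is a sub-word of the boundary label of $\Delta$ whereas $x_i^{m_{ii}}=1$ in $G$, Corollary \ref{cor:OmegeDiagProp} forces $p_1+p_2<m_{ii}$. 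Moreover, because $v$ lies on the boundary of no region other than $\Delta$ and $\Lambda$, the third region meeting $v$ is the exterior; thus the angular sector at $v$ occupied by $\Delta$ is bounded by $\rho_1$ and $\rho_2$, and we may write $\partial\Delta=\rho_1\rho_2\gamma$ cyclically with $\gamma$ the remainder of $\partial\Delta$.

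The surgery is as follows. Put $u=i(\rho_1)$, $w=t(\rho_2)$, and draw inside the disk $\Delta$ an arc $\tau$ from $w$ to $u$ labelled $x_i^{c'}$, where $c'$ is the balanced residue of $-(p_1+p_2)$ modulo $m_{ii}$ (so $|c'|\le\tfrac12 m_{ii}$; one checks $p_1+p_2+c'\in\{0,m_{ii}\}$). This arc splits $\Delta$ into a corner $C$ bounded by $\rho_1\rho_2\tau$ and a trunk $\Delta'=\Delta\setminus C$ bounded by $\gamma\tau^{-1}$. The boundary label of $C$ is a power of $x_i$ whose exponent is $\equiv 0\pmod{m_{ii}}$, so $C$ is either empty or a single $x_i^{m_{ii}}$-region; the boundary label of $\Delta'$ is trivial in $E_{ij}$ (because that of $\Delta$ was), so $\Delta'$ may be realized as a van Kampen diagram over $E_{ij}$ and is an $ij$-type region. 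As $x_i$ is one of the two generators of the $k\ell$-type region $\Lambda$, gluing $C$ onto $\Lambda$ along the connected arc $\rho_1$ (legitimate by Lemma \ref{lem:gluingTwoRgns}) yields again a $k\ell$-type region $\Lambda\cup C$. Let $M''$ be $M'$ with $\Delta$ replaced by $\Delta'$ and $\Lambda$ replaced by $\Lambda\cup C$. Then $M''$ is pre-admissible, with the same boundary and boundary label as $M'$, and the only edges whose $\Omega$-contributions change are $\rho_1,\rho_2,\tau$: in $M'$, $\rho_1$ lies on both $\partial\Delta$ and $\partial\Lambda$ and contributes $2p_1$, while $\rho_2\subseteq\partial\Delta$ contributes $p_2$; in $M''$, $\rho_1$ is interior to $\Lambda\cup C$ and contributes $0$, $\rho_2$ still contributes $p_2$, and $\tau$, now the interface between $\Delta'$ and $\Lambda\cup C$, contributes $2|c'|$. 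Hence $\Omega(M'')-\Omega(M')=2|c'|-2p_1$.

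To conclude, I would verify that $|2p_1+p_2|>m_{ii}$ forces this difference to be negative. If $p_1+p_2\le\tfrac12 m_{ii}$ then $2p_1+p_2=p_1+(p_1+p_2)\le m_{ii}$, contrary to hypothesis; so $p_1+p_2>\tfrac12 m_{ii}$, whence $c'=m_{ii}-p_1-p_2$ and $\Omega(M'')-\Omega(M')=2(m_{ii}-p_1-p_2)-2p_1=2\bigl(m_{ii}-(2p_1+p_2)\bigr)<0$, contradicting the $\Omega$-minimality of $M'$. Therefore $2p_1+p_2\le m_{ii}$, and since $2p_1+p_2>0$ we get $|2p_1+p_2|\le m_{ii}$; the case $p_1,p_2<0$ is symmetric. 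I expect the main obstacle to be the topological cleanliness of the surgery — checking that $\Delta'$ is a genuine spike-free derived region with non-empty interior and that $C$ meets $\Lambda$ in a single connected arc (so that $\Lambda\cup C$ is simply connected and the boundary word is untouched) — and it is exactly here that the hypothesis that $v$ bounds no region beyond $\Delta$ and $\Lambda$, forcing $\rho_2$ to be exterior, is used.
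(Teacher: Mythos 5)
Your proposal is correct and is essentially the paper's own argument: assuming $2p_1+p_2>m_{ii}$, the corner of $\Delta$ at $v$ is cut off along a new arc labelled $x_i^{p_3}$ with $p_1+p_2+p_3=m_{ii}$ and handed to $\Lambda$, so that $\Omega$ now counts $2p_3$ in place of $2p_1$, and $p_3<p_1$ contradicts $\Omega$-minimality. Your write-up just spells out more of the bookkeeping (signs, $p_1+p_2<m_{ii}$ via Corollary \ref{cor:OmegeDiagProp}, pre-admissibility of the modified diagram) that the paper leaves implicit.
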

\begin{proof}
Assume by contradiction that $2p_1 + p_2 > m_{ii}$. By the properties of $\Omega$-minimal diagram we have that $p_1 + p_2 < m_{ii}$, thus we can choose $p_3$ such that $p_1 + p_2 + p_3 = m_{ii}$ ($p_1$, $p_2$, and $p_3$ are positive integers). Suppose we `fix' the diagram as described in Figure \ref{fig:edgeLenBoundCase}. Namely, we replace $\rho_1$ with a path $\rho_2 \rho_3$ such that $\rho_3$ is labeled by $x_i^{p_3}$ and we replace $\rho$ with $\rho_3$. Before the change, $\Omega(M')$ counted $p_1$ twice and after the change it instead counts $p_3$ twice (where the rest of the sum is left unchanged). We claim that this reduces $\Omega(M')$ in contradiction to the minimality. Indeed:
\[
p_3 = m_{ij} - p_1 - p_2 < 2p_1 + p_2 - p_1 - p_2 = p_1
\]

\begin{figure}[ht]
\centering
\includegraphics[totalheight=0.18\textheight]{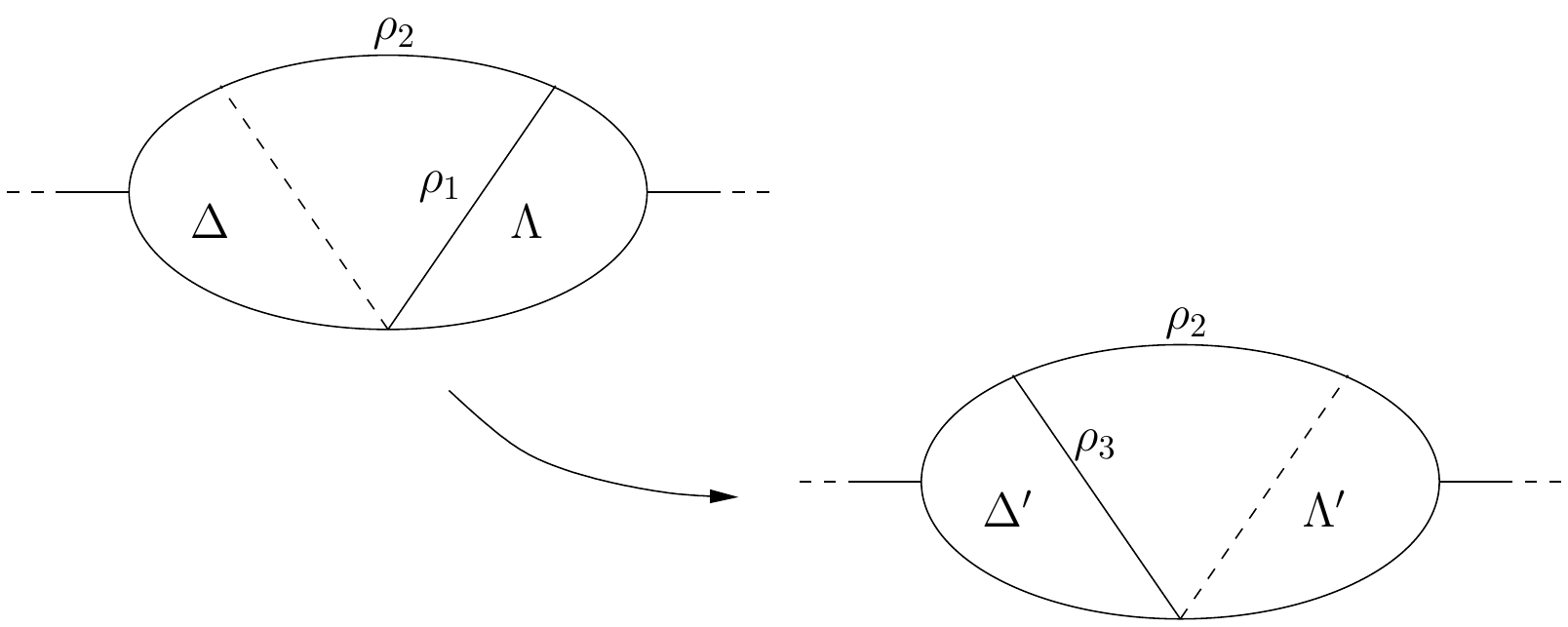}
\caption{Edge labeled with large power near the boundary} \label{fig:edgeLenBoundCase}
\end{figure}
\end{proof}

\begin{remark} \label{rem:SpecCaseOfMijIsThree}
Let $M'$ be an $\Omega$-minimal derived diagram and let $\Delta$ be an $ij$-type cut corner in $M'$. We next make a series of assumptions on $\Delta$ and on its boundary label. These assumptions are satisfied later on at one point during the proof of Proposition \ref{prop:CCtoReduce} and we would want to use Lemma \ref{lem:ThreeSylbGeoSpecialCase} there. Thus, we remark here that indeed all the condition of this Lemma are satisfied under the assumptions below.

The assumptions follow. $\Delta$ has exactly three neighbors and $m_{ij}=3$. Let $\mu_o \mu_i$ be a boundary path of $\Delta$ were $\mu_o$ is the outer boundary of $\Delta$. Also, let $U$ be the label of $\mu_i$ and $V$ the label of $\mu_o$. We assume that $V$ is geodesic in $E_{ij}$, that the number of edges in $\mu_o$ is no more than $\lp{V}$, and that $|U|=|V|$.

We next check that the conditions of Lemma \ref{lem:ThreeSylbGeoSpecialCase} for $U$ and $V$ are satisfied under these assumptions. The first one follows from the $\Omega$-minimality. The next two follow from the fact that $V$ is geodesic and Lemma \ref{lem:noHighPowerInOmgMinDerDiag}. By Lemma \ref{lem:noHighPowerInOmgMinDerDiagBoundCase} we get the forth and fifth conditions. Since $\Delta$ has three neighbors we get that $\lp{U}=3=m_{ij}$ and since $\Delta$ is a cut corner we get that the number of edges in $\mu_o$ is at least three so $3\leq\lp{V}$. This gives us the sixth condition. The last condition is one the assumptions above.
\end{remark}

\begin{lemma} \label{lem:twoDiffGenOnValance3Vtx}
Let $M'$ be an $\Omega$-minimal derived diagram and suppose $\rho=\rho_1 v \rho_2$ is a boundary path of $M'$ which contains a vertex $v$ that is not the initial or terminal of $\rho$. Let $a$ be the last letter of the label of $\rho_1$ and $b$ be the first letter of the label of $\rho_2$. If $v$ is on the boundaries of exactly two derived regions then $a\neq b$.
\end{lemma}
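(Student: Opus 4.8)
The plan is to read off the local structure of $M'$ at $v$ and to obtain a contradiction from the assumption $a=b$ in two complementary ways --- from admissibility, or from the free-reducedness of the boundary labels of derived regions (Corollary~\ref{cor:OmegeDiagProp}).

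First I would fix the picture at $v$. Since $v$ is interior to $\rho$, both $\rho_1$ and $\rho_2$ are non-trivial; let $e_1$ be the last edge of $\rho_1$ (so $a$ is the last letter of its label) and $e_2$ the first edge of $\rho_2$ (so $b$ is its first letter). We may assume that the boundary of $M'$ is a simple closed path (as is the case in all our applications) and, since the assertion is unchanged under reversing $\rho$, that $\rho$ runs in the direction of $\partial M'$. As $v$ is a boundary vertex lying on exactly two derived regions, and as region boundaries are simple paths (Theorem~\ref{thm:w6Struct}), no region occupies two corners at $v$; hence $v$ has valence three, and besides the boundary edges $e_1,e_2$ there is exactly one interior edge $e_0$ at $v$, which separates the region $\Delta_1$ incident to $e_1$ from the region $\Delta_2$ incident to $e_2$. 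Reading $\partial\Delta_1$ in agreement with $\partial M'$, the edge $e_1$ is immediately followed by $e_0$ (run starting at $v$); reading $\partial\Delta_2$ in agreement with $\partial M'$, the edge $e_2$ is immediately preceded by $e_0$ (run towards $v$). By Observation~\ref{obs:derRgnInAdmisDerDiag}, part~\ref{obs:derRgnInAdmisDerDiag:1}, the label of $e_0$ is a non-empty power $x_k^{r}$ of a single generator $x_k$, which is therefore a generator of both $\Delta_1$ and $\Delta_2$.

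Now suppose for contradiction that $a=b$, say $a=b=x_s^{\sigma}$ with $\sigma\in\{+1,-1\}$. Since $e_1\subseteq\partial\Delta_1$ contributes the letter $a$ and $e_2\subseteq\partial\Delta_2$ contributes $b$, the generator $x_s$ is a generator of both $\Delta_1$ and $\Delta_2$. If $x_s\neq x_k$, then $\Delta_1$ and $\Delta_2$ both have generating pair $\{x_k,x_s\}$, i.e.\ both are $(x_k,x_s)$-type; but $\Delta_1$ and $\Delta_2$ are neighbours, contradicting the admissibility of $M'$ (Definition~\ref{def:admisDerDiag}), which $\Omega$-minimal derived diagrams satisfy. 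Hence $x_s=x_k$ and $a=b=x_k^{\sigma}$. Let $x_k^{\tau}$, $\tau\in\{+1,-1\}$, be the first letter of $e_0$ read starting at $v$, so that $x_k^{-\tau}$ is its last letter read towards $v$. In the boundary label of $\Delta_1$ the letter $a=x_k^{\sigma}$ is immediately followed by $x_k^{\tau}$, so free-reducedness of that label (Corollary~\ref{cor:OmegeDiagProp}) forces $\sigma=\tau$; in the boundary label of $\Delta_2$ the letter $x_k^{-\tau}$ is immediately followed by $b=x_k^{\sigma}$, so free-reducedness there forces $\sigma\neq\tau$. These two conclusions are incompatible, so $a\neq b$.

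I expect the main obstacle to be the careful bookkeeping of the two orientations of the interior edge $e_0$ --- the one induced by $\partial\Delta_1$ versus the one induced by $\partial\Delta_2$ --- in the last step; once the local picture at $v$ is secured, the remainder is a direct appeal to the simplicity of region boundaries in $W(6)$ maps, to admissibility, and to Corollary~\ref{cor:OmegeDiagProp}.
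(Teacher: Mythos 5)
Your proof is correct and follows essentially the same route as the paper's: the inner edge at $v$ carries a single generator (Observation \ref{obs:derRgnInAdmisDerDiag}, part \ref{obs:derRgnInAdmisDerDiag:1}), free-reducedness of the two regions' boundary labels (Corollary \ref{cor:OmegeDiagProp}) rules out the case where that generator coincides with the one of $a=b$, and admissibility rules out the remaining case since the two neighbouring regions would then be of the same type. The only difference is organizational: the paper first records the two non-cancellation inequalities and then invokes admissibility, while you split into cases first, but the ingredients and logic are identical.
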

\begin{proof}
Let $\Delta$ and $\Lambda$ be the two adjacent derived regions that contain $v$ in their boundaries. Suppose, $\Delta$ and $\Lambda$ are $ij$-type and $jk$ type regions, respectively. See Figure \ref{fig:sixPos}. Consider the following three edges that emanate from $v$: two boundary edges and the inner edge that is joint to the boundaries of $\Delta$ and $\Lambda$; denote these edges by $e_1$, $e_2$, and $e_3$, respectively, where $e_1^{-1}e_2$ is a sub-path of $\rho$ containing $v$. Let $a$ be the last letter of the label of $e_1^{-1}$, $b$ be the first letter of $e_2$, and $c$ the first letter of $e_3$. Since the boundary labels of $\Delta$ and $\Lambda$ are freely-reduced (Corollary \ref{cor:OmegeDiagProp}) we have the following inequalities: $a\neq c^{-1}$ and $b\neq c$. Assume by contradiction that $a=b$. Then, both the boundaries of $\Delta$ and $\Lambda$ would contain the letters $a$ and $c$ (which are different because $a\neq c$ and also $a\neq c^{-1}$) so both regions are $ij$-type regions for the same $i$ and $j$. This would contradicts the admissibility of $M'$.

\begin{figure}[ht]
\centering
\includegraphics[totalheight=0.18\textheight]{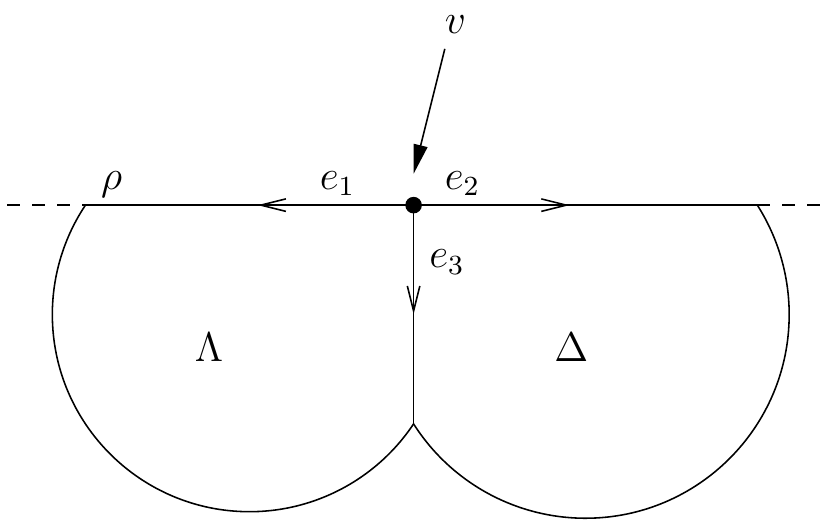}
\caption{A boundary vertex of valence three} \label{fig:sixPos}
\end{figure}

\end{proof}

And next, we give the main proposition of this section; once we establish it the rest of the proof is mostly routine. The reader may find the flow chart (Figure \ref{fig:flowChart}) that is attached after the proof helpful while reading the proof.

\begin{proposition} \label{prop:CCtoReduce}
Let $M$ be a van Kampen diagram over $G$, let $M'$ be an $\Omega$-minimal derived diagram over $M$ and consider a proper $V(6)$ structure on $M'$ through a minimal set of syllable-induced edges and vertices. Suppose $\rho$ is a boundary path of $M'$ with label $W\in\ws$. If $\rho$ contains a cut corner (a derived region in $M'$) then we can find $U\in\ws$ such that: (i) $U\prec_p W$, (ii) $U=W$ in $G$, and (iii) $U$ and $W$ are $2\kappa(G)$-fellow-travelers.
\end{proposition}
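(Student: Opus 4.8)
The plan is to extract from the cut corner $D$ a factorization of the boundary word $W$ around $D$ and then to perform the ``fixing'' move that the cut corner types $\textrm{T1}$–$\textrm{T4}$ are designed to permit, namely replacing the outer boundary label of $D$ by its complementary (inner) label. Concretely, write $\rho = \alpha\,\rho_D\,\beta$ where $\rho_D$ is the outer boundary of the proper boundary region $D$, and correspondingly $W = W_1\,V\,W_2$ with $V$ the label of $\rho_D$. Let $\delta_D$ be the complementary boundary path of $D$ with label $U'$, so that $U'V^{-1}=1$ in $E_{ij}$ (since $D$ is an $ij$-type derived region, hence a van Kampen diagram over $E_{ij}$), and set $U := W_1\,(U')^{-1}W_2$. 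Then $U = W$ in $G$ because the relation $U'=V$ holds already in $E_{ij}$, which is a quotient of the corresponding two-generator Shephard group and hence its relations hold in $G$; this gives property (ii).

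Property (iii), the fellow-traveler bound, will follow from Observation~\ref{obs:CheckingFeloTrvlProp}: $U$ is obtained from $W$ by replacing the factor $V$ (which equals $U'$ in $G$) by $U'$, so $W$ and $U$ are $(|V|+|U'|)$-fellow-travelers. It remains to bound $|V|$ and $|U'|$ by $\kappa(G)$. For $|V|$: since $M'$ is $\Omega$-minimal, the boundary label of every derived region has length at most $\kappa(G)$ by the corollary following Lemma~\ref{lem:onFiniteGrps}, and $V$ is a sub-word of the boundary label of $D$. For $|U'|$: we use Lemma~\ref{lem:TechPropOfTwoGenGrp} (together with Lemmas~\ref{lem:noHighPowerInOmgMinDerDiag} and~\ref{lem:noHighPowerInOmgMinDerDiagBoundCase} to verify the syllable hypotheses $\lls{x_i}{V}\le\frac12 m_{ii}$, $\lls{x_j}{V}\le\frac12 m_{jj}$, and the concatenation bounds). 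In the cut corner types where $r<s$ (case $\textrm{T1}$) the complementary path has strictly fewer edges and, via Lemma~\ref{lem:TechPropOfTwoGenGrp}(1)–(2) applied with $\lp{U'}<m_{ij}$, we get $|U'|\le|V|\le\kappa(G)$; in the types $\textrm{T2}$, $\textrm{T3}$, $\textrm{T4}$ we have $r=s\le 3$ by Remark~\ref{rem:OutEdgAtlstAsInEdgInCC} and Lemma~\ref{lem:SixEdgeCC}, and here the equality case $\lp{U'}=m_{ij}$ can occur, so $|U'|$ need not be $\le|V|$ a priori; but each syllable of $U'$ is a power $x^p$ with $|p|\le\frac12 m_{ii}$ by Lemma~\ref{lem:noHighPowerInOmgMinDerDiag} (or $\le m_{ii}$ on the boundary by Lemma~\ref{lem:noHighPowerInOmgMinDerDiagBoundCase}), and there are at most three syllables, so $|U'|$ is still bounded by a constant depending only on $G$, and in fact by $\kappa(G)$ since $U'$ labels (the inner part of the boundary of) a derived region of an $\Omega$-minimal diagram. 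Hence $|V|+|U'|\le 2\kappa(G)$, giving (iii).

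The remaining and most delicate point is property (i), $U\prec_p W$. If the fixing move strictly shortens the word, i.e.\ $|U|<|W|$, we are done immediately by the first clause of Definition~\ref{def:PeiferOrder}; this covers type $\textrm{T1}$ (where $|\delta_D|<|\rho_D|$ forces $|U'|<|V|$, using the freely-reduced conclusion of Corollary~\ref{cor:OmegeDiagProp} so no cancellation with $W_1,W_2$ lengthens things — and here one must also rule out free cancellations across the splice points making $|U|$ unexpectedly small \emph{but in a way that would already contradict $W\in\ws$ being the given boundary label}; since we only need $|U|<|W|$, any such cancellation only helps). The hard case is $|U|=|W|$, which is exactly the equal-length situation $|\delta_D|=|\rho_D|$ of types $\textrm{T2}$–$\textrm{T4}$; there one must show that the Peifer vector $\lambda_U$ strictly precedes $\lambda_W$ lexicographically. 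This is where the fine combinatorial data of the cut corner types — the valence-three vertex $i(e_\ell)$, the position conditions $\ell>0$ or $\ell>1$, and the neighbor $E$ with its $\rho_E$ — must be exploited: the entries of $\lambda$ at the positions corresponding to the splice between $W_1$ and the replaced block must drop, and one invokes Lemma~\ref{lem:ThreeSylbGeoSpecialCase} (whose hypotheses are checked in Remark~\ref{rem:SpecCaseOfMijIsThree}) together with Lemma~\ref{lem:twoDiffGenOnValance3Vtx} to control what generators appear at the junctions and thereby force the relevant coordinate of the Peifer vector from a value $\ge 2$ down to a smaller value (e.g.\ from $3$ to $2$, or from $2$ to $1$, or from $1$ to $0$). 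I expect the bulk of the write-up, and essentially all of the real content, to be a careful case analysis over $\textrm{T2}$, $\textrm{T3}$, $\textrm{T4}$ showing that after the replacement the suffix $W_1(U')^{-1}$ acquires a ``better'' terminal syllable pattern than $W_1 V$ had at the corresponding prefix length, so that $\lambda_U \prec \lambda_W$ in lexicographic order.
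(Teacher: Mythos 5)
Your replacement move (swap the outer label $W_2$ of the cut corner for its inner label $W_2'$) is the right starting point, and your treatment of (ii) and (iii) is fine, but the proof of (i) is where essentially all of the content lies and it is not carried out. In the paper this requires a chain of reductions that your sketch does not supply: first one disposes of the case that $W_2$ is not geodesic in $E_{ij}$; then, using Lemma \ref{lem:TechPropOfTwoGenGrp} together with Lemmas \ref{lem:noHighPowerInOmgMinDerDiag} and \ref{lem:noHighPowerInOmgMinDerDiagBoundCase}, one proves that in all remaining cases $|W_2'|=|W_2|$, $r=\lp{W_2'}=m_{ij}$ and $(r,s)\in\{(2,2),(3,3)\}$, so that the initial vertex of the outer boundary has valence three; only then can the lexicographic comparison of $\lambda_U$ and $\lambda_W$ be made, via Lemma \ref{lem:twoDiffGenOnValance3Vtx}, Lemma \ref{lem:ThreeSylbGeoSpecialCase} (through Remark \ref{rem:SpecCaseOfMijIsThree}), and, crucially, the large triangles condition, which forces $m_{ki}=\infty$ at the junction and hence $a_{t+1}=3>b_{t+1}$ in the $(2,2)$ and T3 cases. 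Your write-up never invokes the large triangles hypothesis at all and ends with ``I expect the bulk of the write-up \dots to be a careful case analysis,'' which is precisely the part that needs to be proved.

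Two concrete points would make the sketch fail as written. First, your dismissal of type T1 rests on the claim that $|\delta_D|<|\rho_D|$ forces $|U'|<|V|$; this is false, since edges of a derived diagram are labeled by powers $x^p$ of generators, so an edge count does not bound a label length (two inner edges labeled $x_i^4$ and $x_j^4$ can outweigh three outer single-letter edges). The correct route is the opposite: after reducing to $W_2$ geodesic, Lemma \ref{lem:TechPropOfTwoGenGrp} shows the inner-label replacement never shortens the word, so the length-drop clause of Definition \ref{def:PeiferOrder} is available only in the non-geodesic case. Second, the single move ``outer label $\mapsto$ inner label'' does not suffice: when $m_{ij}=2$ and $\lp{W_2}\geq3$ the paper instead rewrites inside $W_2$ itself, replacing a subword $a\,b^p\,a$ by $a^2 b^p$, which creates a zero entry in the Peifer vector; without this separate move one cannot reach the configuration $(r,s)\in\{(2,2),(3,3)\}$ on which your intended junction analysis depends, so the case analysis you defer could not be completed using only the replacement you propose.
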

\begin{proof}
Suppose $\Delta$ an $ij$-type derived region that is a cut corner which is contained in $\rho$. Assume that $W$ decomposes as $W=W_1 W_2 W_3$ where $W_2$ is the label of the outer boundary of $\Delta$, denoted by $\mu_o$ (thus, $W_2\in\mathcal{W}(x_i,x_j)$). Let $\mu_i$ the complement of $\mu_o$ in the boundary of $\Delta$ such that $i(\mu_o)=i(\mu_i)$ (i.e., they start on the same vertex) and let $W_2'$ be the label of $\mu_i$. Finally, let $s$ be the number of outer distinguished edges in $\mu_o$ and let $r$ be the number of neighbors of $\Delta$. See Figure \ref{fig:CCInRho}. It is clear that we can assume that $W$ is freely reduced (Remark \ref{rem:freeRedReducePeiferOrd}). Since $\Delta$ is a cut corner it is a proper boundary region. Using the fact that the set of edges is a minimal set of syllable-induced edges we get that $r\leq3$, and $r\leq s$ (see Remark \ref{rem:OutEdgAtlstAsInEdgInCC} and Lemma \ref{lem:SixEdgeCC}). And, since $|W_2 (W_2')^{-1}| \leq \kappa(G)$ we have that $W$ and $W_1 W_2' W_3$ are $\kappa(G)$-fellow-travelers (Observation \ref{obs:CheckingFeloTrvlProp}) and also that $W=W_1 W_2' W_3$ in $G$. The proof proceeds in eight steps (with a small interruption for notations in the middle.)

\begin{figure}[ht]
\centering
\includegraphics[totalheight=0.18\textheight]{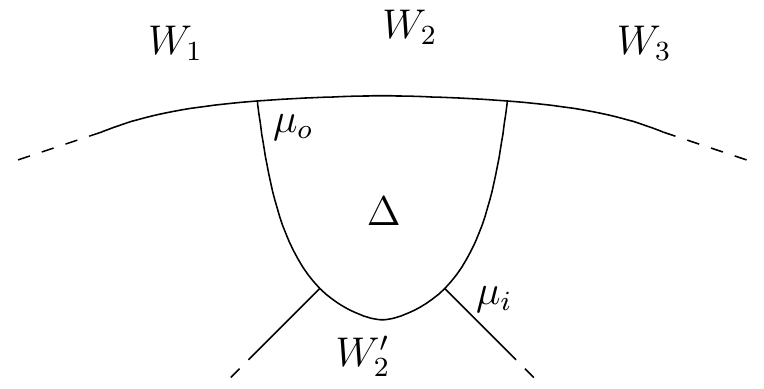}
\caption{Illustration of the situation in the proof of Proposition \ref{prop:CCtoReduce}} \label{fig:CCInRho}
\end{figure}

\begin{enumerate}

 \item \label{prop:CCtoReduce:1} In the first step we treat the case where $W_2$ is not geodesic in $E_{ij}$ (the edge-subgroup generated by $x_i$ and $x_j$). Then, there is $V\in\mathcal{W}(x_i,x_j)$ such that $|V|<|W_2|$ and $V=W_2$ in $E_{ij}$. Since $|W_2|\leq\kappa(G)$ we have that $|V|+|W_2|\leq2\kappa(G)$ and thus $U=W_1 V W_3$ has all the properties (i), (ii), and (iii), above (the first one follows because $|U|<|W|$ and third one follows due to Observation \ref{obs:CheckingFeloTrvlProp}). \emph{For the rest of the proof we will assume that $W_2$ is geodesic in $E_{ij}$}.

 \item \label{prop:CCtoReduce:2} In this step we treat the case where $m_{ij}=2$ and $\lp{W_2}\geq3$. If $\lp{W_2}\geq3$ we get that $W_2$ has a sub-word of the following form: $a^{\eps_1} b^p a^{\eps_2}$ where $\eps_1,\eps_2\in\Set{-1,1}$, $p\in \mathbb{Z} \setminus \Set{0}$,  $a,b\in\Set{x_i,x_j}$, and $a\neq b$. By our assumption in this step $m_{ij}=2$ so $ab=ba$ in $G$ and since $W_2$ is geodesic we have that $\eps_1=\eps_2$. Assume, w.l.o.g, that $\eps_1=\eps_2=1$. Let $W_2 = V_1 a b^p a V_3$ and denote by $k$ the length of $W_1V_1$. Also let $V_2 = a b^p a$, $V_2' = a^2 b^p$, and $U=W_1 V_1 V_2' V_3 W_3$. Clearly $W=U$ in $G$ (since $ab=ba$ in $E_{ij}$). We show that $U \prec_p W$. This follows from the definition of Peifer vectors (Definition \ref{def:PeiferVector}): $\lambda_{W}$ is strictly positive in the $(k+2)$ entree since, $W(k+2)$ ends with $ab$; $\lambda_{U}$ is zero in the $(k+2)$ entree since, $U(k+2)$ ends with two occurrences of $a$; the first $k+1$ entries of $\lambda_{W}$ and $\lambda_{U}$ are identical. Hence, $\lambda_{U}$ precedes the vector $\lambda_{W}$ in lexicographical order. To complete this step we note that $|V_2'|+|V_2| \leq 2 |W_2| \leq 2\kappa(G)$ so consequently $U$ and $W$ are $2\kappa(G)$ fellow-travelers. \emph{For the rest of the proof we will assume that if $m_{ij}=2$ then $\lp{W_2}\leq2$}.
 \item \label{prop:CCtoReduce:3} Recall that $r$ is the number of neighbors $\Delta$ has. In this step we show that $r \leq m_{ij}$. Since $r\leq 3$ this immediately follows if $3 \leq m_{ij}$. If $m_{ij}=2$ then by the assumption we made in step \ref{prop:CCtoReduce:2} we have that $s = \lp{W_2} \leq 2$. Thus, $\Delta$ is a cut corner of type $T2$ so $r=2$ and specifically, $r \leq m_{ij}$.

 \item \label{prop:CCtoReduce:4} In this step we show that $|W_2|=|W_2'|$ and that $r=\lp{W_2'}=m_{ij}$. Let $\mu_i=\delta_1 \delta_2 \cdots \delta_r$ be the syllable-induced edges along $\mu_i$. We have that $\lp{W_2'} \leq r$ since each of the edges along $\mu_i$ is labeled by a power of a generator (see Observation \ref{obs:derRgnInAdmisDerDiag} - part \ref{obs:derRgnInAdmisDerDiag:1}). By step \ref{prop:CCtoReduce:3} we have that $r\leq m_{ij}$ and thus $\lp{W_2'}\leq m_{ij}$. Also, we have that $W_2 (W_2')^{-1}=1$ in $E_{ij}$. Write $W_2'=x_{j_1}^{p_1} \cdots x_{j_\ell}^{p_\ell}$ where $x_{i_j}\in X$ and $x_{i_j}\neq x_{i_{j+1}}$ for $1\leq j<\ell$ (note that it is possible that $\ell<r$). By using the relations $x_i^{m_{ii}}=1$ and $x_j^{m_{jj}}=1$ in $E_{ij}$ we can write $V = x_{j_1}^{p_1'} \cdots x_{j_\ell}^{p_\ell'}$ such that $|p_j'| \leq \frac{1}{2} m_{i_j i_j}$ and $W_2'=V$ in $E_{ij}$. Now, $\lp{V}=\lp{W_2'}\leq m_{ij}$, $V W_2^{-1}=1$ in $E_{ij}$, $\lls{x_i}{V}\leq \frac{1}{2}m_{ii}$ and $\lls{x_j}{V}\leq \frac{1}{2}m_{jj}$. Hence, by Lemma \ref{lem:TechPropOfTwoGenGrp} we have that $|V|\leq|W_2|$ and since $W_2$ is geodesic we have that $|V|=|W_2|$. We claim that $|V|=|W_2'|$. Suppose otherwise, then there is an index $j$ such that $\delta_j$ and $\delta_{j+1}$ have labels that are powers of the same generator. If that happen then $\lp{V}<r\leq m_{ij}$ so Lemma \ref{lem:TechPropOfTwoGenGrp} we get that $|V|<|W_2|$ in contradiction to the equality above. To show that $r=m_{ij}$ assume by contradiction that $r<m_{ij}$ then $\lp{V}<m_{ij}$ so again by Lemma \ref{lem:TechPropOfTwoGenGrp} we'd get that $|V|<|W_2|$ in contradiction to $W_2$ being geodesic. The equality $\lp{W_2'}=m_{ij}$ follows along the same lines. 

 \item \label{prop:CCtoReduce:5} Recall that $r$ is the number of neighbors that $\Delta$ has and $s$ is the number of boundary edges $\Delta$ has. In this step we show that $(r,s) \in \Set{(2,2),(3,3)}$. From step \ref{prop:CCtoReduce:4} we have that $r=m_{ij}$ and thus $2\leq r$. If $r=2$ then the only option is that $(r,s)=(2,2)$ because of the assumption at the end of step \ref{prop:CCtoReduce:3} we have that $s\leq\lp{W_2}\leq2$ (see Remark \ref{rem:MinSylIndEdgSylLenBoundNumEdgs} for the first inequality). The next case is $r>2$. We have that $r\leq 3$ and $r\leq s$ (see Remark \ref{rem:OutEdgAtlstAsInEdgInCC}) thus we need to show that if $r=3$ then $s=3$; this follows from Lemma \ref{lem:SixEdgeCC}.
\end{enumerate}

We interrupt the sequence of steps to introduce few notations that will be used in the last steps. Let $U=W_1 W_2' W_3$ and denote the length of $|W_1|$ by $t$. Clearly, condition (ii) holds for $U$ by the construction of $U$ and condition (iii) holds by Observation \ref{obs:CheckingFeloTrvlProp} since $|W_2|+|W_2'|\leq\kappa(G)$. Thus, we need to show in the remaining cases that condition (i) holds for $U$. Let $\lambda_W=(a_1\ldots,a_t,a_{t+1},\ldots,a_n)$ and let $\lambda_U=(b_1,\ldots,b_t,b_{t+1},\ldots,b_n)$ (note that both $W$ and $U$ have the same prefix $W_1$ and thus we have that $\lambda_W$ and $\lambda_U$ agree on the first $t$ entries). Since $(r,s)\in\Set{(2,2),(3,3)}$ we get that $\Delta$ is a cut corner of type T2, type T3, or type T4 (Remark \ref{rem:OutEdgAtlstAsInEdgInCC}). Hence, the vertex $i(\mu_o)$ belongs to the boundaries of exactly two derived regions, $\Delta$ and $\Lambda$. See Figure \ref{fig:frstVertx}. Let $x_k$ be the generator such that the last letter of $W_1$ is $x_k$ or $x_k^{-1}$. Let $x_i$ be the generator such that the first letter of $W_2$ is $x_i$ or $x_i^{-1}$. Lastly, there is a single inner edges emanating from $i(\mu_o)$ that belongs to both $\partial \Delta$ and $\partial \Lambda$; let $x_\ell$ be the generator such that the first letter of the label of this edge emanating is $x_\ell$ or $x_\ell^{-1}$.  We assume that $\Lambda$ is a $k\ell$-type region. Note that since $x_\ell$ is on the boundary of $\Delta$ we have that $\ell\in\Set{i,j}$. The proof continues with the following steps:

\begin{figure}[ht]
\centering
\includegraphics[totalheight=0.18\textheight]{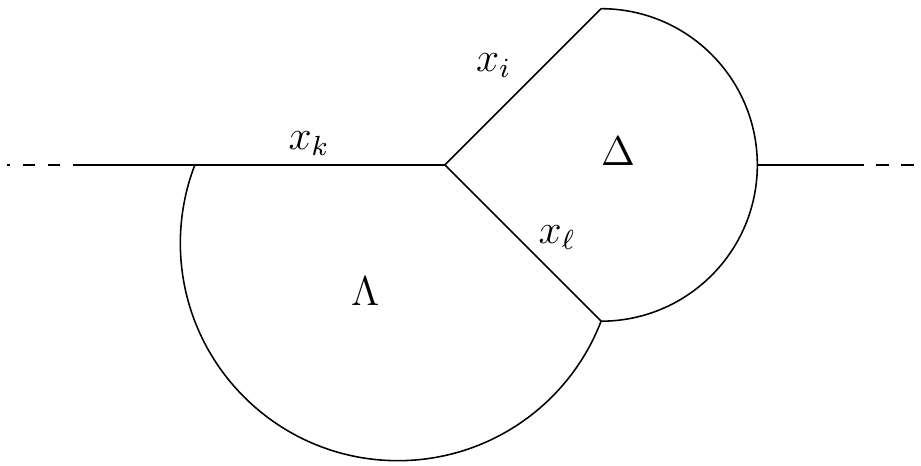}
\caption{Near the first vertex of a cut corner}
\label{fig:frstVertx}
\end{figure}

\begin{enumerate} \setcounter{enumi}{5}

 \item \label{prop:CCtoReduce:6} In this step we treat the case that $\ell=k$. If $\ell=k$ then $b_{t+1}=0$ (since, $U(t+1)$ ends with two occurrences of $x_k$) and $a_{t+1}>0$ (since, $k\neq i$ by Lemma \ref{lem:twoDiffGenOnValance3Vtx}). Therefore, an equality $k=\ell$ implies that $b_{t+1}<a_{t+1}$ and we get that $\lambda_U$ precedes $\lambda_W$ in lexicographical order and thus $U\prec_p W$. \emph{For the rest of the proof we will assume that $k\neq \ell$}. 
 
 \item \label{prop:CCtoReduce:7} In this step we treat the case that $(r,s)=(2,2)$. Here, $\Delta$ is a cut corner of type T2 (see Figure \ref{fig:CC} - case 2). We have that $m_{ij}=2$ and that the number of syllables in $\partial \Delta$ is exactly four (it is at least four by the Apple-Schupp syllable length condition and it is at most four by the assumption at the end of step \ref{prop:CCtoReduce:2}). Hence, $\ell\neq i$ so we get that $\ell=j$ and $m_{kj}\neq\infty$. Since $m_{ij}=2$ we get by the large triangles condition (i.e., no triangles in the Shephard graph with one side labeled by $2$) that $m_{ki}=\infty$. Therefore, no suffix of $W(t+1)$ is in $B^{(2)}$ or in $B^{(3)}$ (these are two of the three sets that were used to define the Peifer order - Definition \ref{def:PeiferOrder}). By Lemma \ref{lem:twoDiffGenOnValance3Vtx} we have that $k\neq i$ so no suffix of $W(t+1)$ is in $A$ and thus $a_{t+1}=3$. However, $x_k^{\epsilon_1} x_j^{\epsilon_1}$ (for $\epsilon_1,\epsilon_2\in\Set{-1,1}$) is a suffix of $U(t+1)$ and is in $B^{(2)}$ so $b_{t+1}<3$. Consequently, $U\prec_p W$. Thus, we showed that condition (i) holds for $U$ and the proposition hold in this case.

 \item In this step we treat the case that $(r,s)=(3,3)$. In this case $m_{ij}=3$ and $\Delta$ has three neighbors. All the assumptions of Lemma \ref{lem:ThreeSylbGeoSpecialCase} hold (see Remark \ref{rem:SpecCaseOfMijIsThree}) and thus the generators at the beginning of $W_2$ and $W_2'$ are different, hence $\ell\neq i$. Consequently we get that $\ell=j$. There are two sub-cases to consider, the case where $\Delta$ is a cut corner of type $T3$ and the case of type $T4$ (see Figure \ref{fig:CC} - cases 3 and 4).
     \begin{enumerate}
      \item Cut corner of type T4. We have that $\Lambda$ has at least two boundary edges and $m_{ij}=3$. Let $V$ be the label of the connected component of the outer boundary of $\Lambda$ that is adjacent to $i(\mu_o)$. We have that $\lp{V}\geq2$ (see Remark \ref{rem:MinSylIndEdgSylLenBoundNumEdgs}) and that $V$ is a suffix of $W_1$. Consequently, $W_1$ has a suffix $x_\ell^{m_1} x_k^{m_2}$ for $m_1,m_2\in\mathbb{Z}\setminus\Set{0}$. This shows that $b_{t+1}=1$ since $x_\ell^{m_1} x_k^{m_2} x_\ell^{\epsilon}\in B^{(3)}$ and that is a suffix of $U(t+1)$. On the other hand, $x_\ell^{m_1}x_k^{m_2}x_i$ is a suffix of $W(t+1)$. Hence, to show that $b_{t+1}<a_{t+1}$ it is enough to show that $W(t+1)$ has no suffix in $B^{(3)}$. This follows since $\ell=j$.
      \item Cut corner of type T3. Suppose $V$ is the label of the connected component of the outer boundary of $\Lambda$ that is adjacent to $i(\mu_o)$, which we denote by $\delta$. If $\lp{V}\geq2$ then by repeating the argument of the T4 case above we are done. Hence, we can assume that $\lp{V}=1$ and thus $\delta$ contains one edge (see Remark \ref{rem:MinSylIndEdgSylLenBoundNumEdgs}). Suppose $\Lambda$ is an $gh$-type region ($g$ and $h$ are two indexes). We show that $m_{gh}=2$. This will finish this case since we could then repeat the argument given for cut corner of type T2 (step \ref{prop:CCtoReduce:7}). So we show that $m_{gh}=2$. The assumption of cut corner of type T3 is that $\partial \Lambda$ contains four edges. It is therefore enough to show that the syllable length of $\partial \Lambda$ is at most four (hence, by Appel-Schupp syllable length condition we'd get that $2m_{gh}\leq 4$ so consequently $m_{gh}=2$). If $\Lambda$ is a proper boundary region then it has three neighbors (due to the three inner edges of $\Lambda$) so together with $V$ there are at most four syllables in the label of $\partial \Lambda$. If $\Lambda$ is not a proper boundary region then, by the construction of the minimal set of syllable-induced edges, the number of syllables in $\partial \Lambda \cap \partial M$ is equal to the number of edges in $\partial \Lambda \cap \partial M$. Thus, once again there are at most four syllables in the label of $\partial \Lambda$.
     \end{enumerate}
\end{enumerate}

\end{proof}

\begin{figure}[ht]
\centering
\includegraphics[totalheight=0.3\textheight]{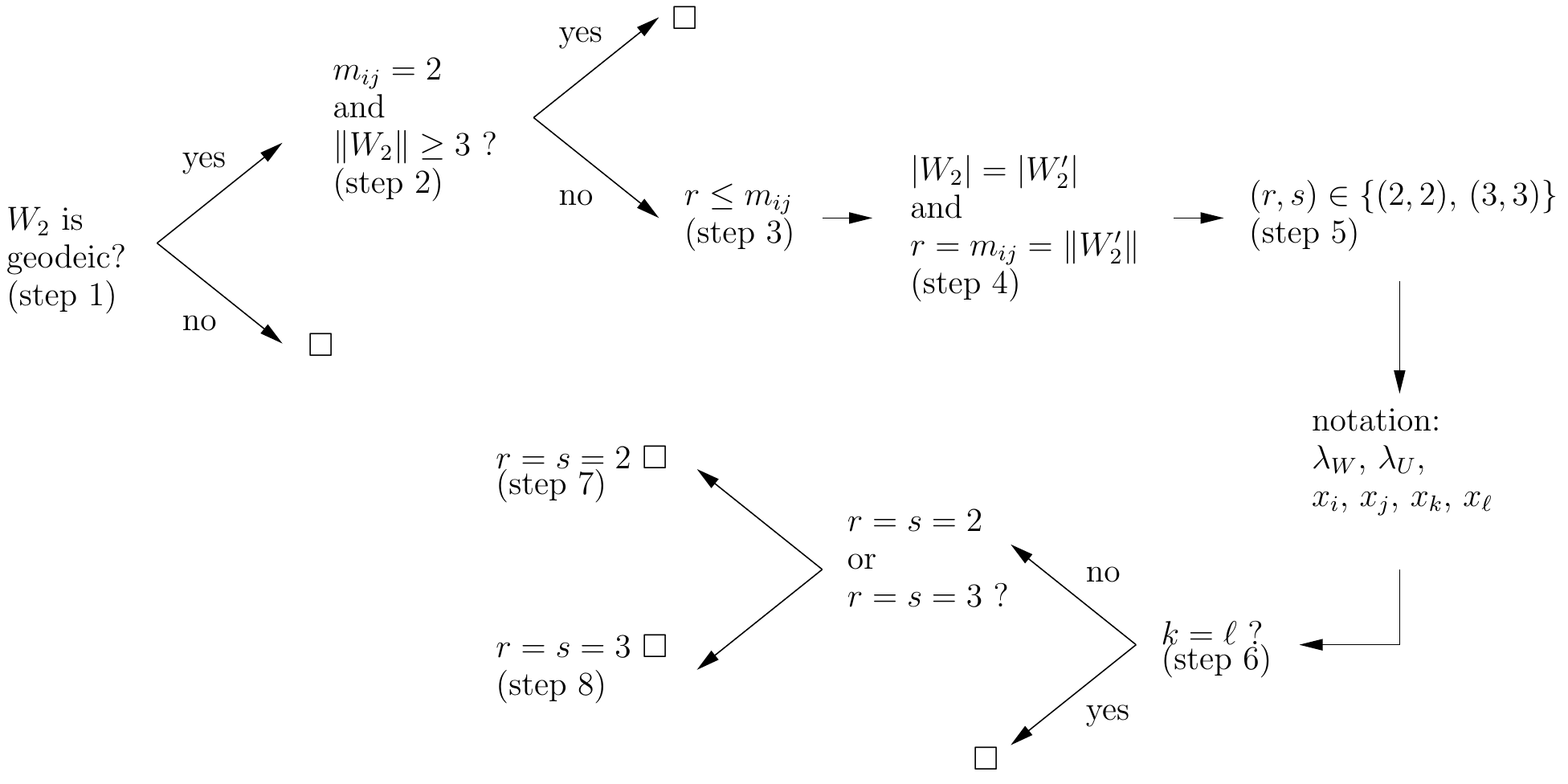}
\caption{A flow chart of the proof of Proposition \ref{prop:CCtoReduce}} \label{fig:flowChart}
\end{figure}

Using Proposition \ref{prop:CCtoReduce} we can now complete the proof of Proposition \ref{prop:LGisBiAutoStruct}. First, we show that $L_G$ is regular using Lemma \ref{lem:falseKFT}. Let $W\in\ws$ be a non-Peifer minimal. We take $U\in\ws$ such that $W=U$ in $G$ and $U$ is Peifer minimal. Let $M'$ be an $\Omega$-minimal derived diagram with boundary cycle $\rho\delta^{-1}$ such that $\rho$ labeled by $W$ and $\delta$ is labeled by $U$. Consider the proper $V(6)$ structure on $M'$ through a minimal set of syllable-induced edges. By Proposition \ref{prop:CCtoReduce} we are done if $\rho$ contains a cut corner. Thus, we can assume that $\rho$ does not contain a cut corner. By the minimality of $U$, also the path $\delta$ does not contain a cut corner. Thus, by Theorem \ref{thm:diagStructCC} we have that $M'$ is a $(\rho,\delta)$-thin diagram. We conclude with the following lemma from \cite{Pei96}:

\begin{lemma} \label{lem:thinDiagForKFT}
\cite[Lemma 21]{Pei96}. Let $M$ be a $(\rho,\delta)$-thin diagram with $\rho$ labeled by $W$ and $\delta$ labeled by $U$. Let $k$ a natural number that bounds the lengths of labels of boundaries of regions in $M$. Then, one of the following holds:
\begin{enumerate}
 \item $W$ and $U$ are $k$-fellow-travelers.
 \item There is a word $W'$ such that $W'$ and $W$ are $k$-fellow-travelers, $\len{W'}<\len{W}$, and $\overline{W'}=\overline{W}$.
 \item There is a word $U'$ such that $U'$ and $U$ are $k$-fellow-travelers, $\len{U'}<\len{U}$, and $\overline{U'}=\overline{U}$.
\end{enumerate}
\end{lemma}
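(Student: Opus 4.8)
The plan is to exploit the ladder structure that thinness forces on $M$. Since every region has at most two neighbours and its boundary meets both $\rho$ and $\delta$, the regions line up as a chain $D_1, D_2, \ldots, D_m$; writing $\pi_0$ for the trivial path at $i(\rho) = i(\delta)$, $\pi_m$ for the trivial path at $t(\rho) = t(\delta)$, and $\pi_j$ for (a connected component of) $\partial D_j \cap \partial D_{j+1}$, one obtains decompositions $\rho = \alpha_1 \alpha_2 \cdots \alpha_m$ and $\delta = \beta_1 \beta_2 \cdots \beta_m$ with boundary cycle $\partial D_j = \alpha_j\,\pi_j\,\beta_j^{-1}\,\pi_{j-1}^{-1}$. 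Degenerate features I would clear away first: if $W$ or $U$ is not freely reduced, a single free reduction yields a strictly shorter $2$-fellow-traveler (Remark \ref{rem:freeRedReducePeiferOrd}), so we are in case 2 or case 3; and if $M$ has no regions it is a tree, so after such reductions $\rho = \delta$ as words and we are in case 1.

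Next I would record two facts. First, each $\partial D_j$ is labelled by an element of the symmetric closure of $\R$, so $|\partial D_j| \le k$; in particular every arc $\alpha_j$, $\beta_j$ and every separating path $\pi_j$ has length at most $k$, and $|\alpha_j| + |\beta_j| + |\pi_{j-1}| + |\pi_j| \le k$. Second, the region relations $\overline{\Phi(\alpha_j)\,\Phi(\pi_j)} = \overline{\Phi(\pi_{j-1})\,\Phi(\beta_j)}$ (coming from $\overline{\Phi(\partial D_j)} = 1$), substituted successively and using $\pi_0$ trivial, give $\overline{W(a_j)}\,\overline{\Phi(\pi_j)} = \overline{U(b_j)}$ in $G$, where $a_j := |\Phi(\alpha_1 \cdots \alpha_j)|$ and $b_j := |\Phi(\beta_1 \cdots \beta_j)|$; hence $d\bigl(W(a_j), U(b_j)\bigr) \le |\Phi(\pi_j)| \le k$ for all $j$, with $a_0 = b_0 = 0$ and $a_m = |W|$, $b_m = |U|$.

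Now the trichotomy. From $\overline{\Phi(\alpha_j)} = \overline{\Phi(\pi_{j-1})\,\Phi(\beta_j)\,\Phi(\pi_j)^{-1}}$ we see that if some $\alpha_j$ is non-geodesic in $G$ then replacing the block $\Phi(\alpha_j)$ inside $W$ by a shortest word for its image produces a strictly shorter $W'$ with $\overline{W'} = \overline W$; since the old block and its replacement both have length at most $k$, Observation \ref{obs:CheckingFeloTrvlProp} makes $W'$ a fellow-traveler of $W$, so we are in case 2 (symmetrically, a non-geodesic $\beta_j$ gives case 3). So assume every $\alpha_j$ and every $\beta_j$ is geodesic. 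If moreover $|a_j - b_j| \le k$ for all $j$, then for any $\ell$ I would pick $j$ with $a_{j-1} \le \ell \le a_j$ and bound, via the triangle inequality through $W(a_j)$ and $U(b_j)$, using $d(W(\ell), W(a_j)) = a_j - \ell \le |\alpha_j| \le k$ and $d(U(b_j), U(\ell)) = |b_j - \ell| \le |b_j - a_j| + |\alpha_j| \le 2k$ (both equalities because $W$, $U$ are geodesic), to get $d(W(\ell), U(\ell))$ bounded in terms of $k$ only — that is case 1 (routing a little more carefully around the separating paths sharpens the constant to the $k$ claimed). The remaining situation is that small per-region length discrepancies between the $\alpha$-arcs and the $\beta$-arcs accumulate, so that at some separating path a prefix of $W$ outruns the matching prefix of $U$ by more than $k$ (or vice versa). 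Taking the first such $\pi_j$: the prefixes stayed within $k$ up to $\pi_{j-1}$, so $a_j - b_j \le 2k$; replacing the prefix $\Phi(\alpha_1 \cdots \alpha_j)$ of $W$ by $\Phi(\beta_1 \cdots \beta_j)\,\Phi(\pi_j)^{-1}$ then yields a strictly shorter word for $\overline W$ whose common tail $\Phi(\alpha_{j+1} \cdots \alpha_m)$ is shifted by only $a_j - b_j - |\pi_j| \le 2k$, and an induction on the number of regions applied to the thin sub-diagram $D_1 \cup \cdots \cup D_j$ supplies the fellow-traveler control on the modified prefix (the boundary case $j = m$ is special but easy: there $|W| - |U| \le 2k$, so one may simply take $W' = U$).

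The step I expect to be the main obstacle is exactly this last one: ruling out unbounded drift between $W$ and $U$ in the absence of a single visibly wasteful region. Locally the two sides of each region have comparable length, but this does not by itself bound the cumulative discrepancy, so one must show that the discrepancy is "bounded at its onset" — the first separating path at which it becomes large is reached with only a bounded overhang — and then reroute the excess across that path by a modification of bounded width, so that the fellow-traveler constant survives. Organising this rerouting (and the induction that legitimises it) so that it always lands in case 2 or case 3 rather than spoiling the bound is where the real bookkeeping of the lemma lies; once it is in place, the three alternatives are precisely: all arcs geodesic and no rerouting needed (case 1), a shortening of $W$ (case 2), or a shortening of $U$ (case 3).
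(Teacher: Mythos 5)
First, note that the paper does not prove this statement at all: it is imported verbatim as \cite[Lemma 21]{Pei96}, so your attempt can only be judged against what the lemma itself demands. Your skeleton (ladder decomposition $\rho=\alpha_1\cdots\alpha_m$, $\delta=\beta_1\cdots\beta_m$ with separating paths $\pi_j$, the corner estimate $d(W(a_j),U(b_j))\le|\pi_j|\le k$, and the trichotomy ``some arc non-geodesic / bounded drift / drift overflow'') is the right shape, but as written it does not establish the lemma, for two concrete reasons. (1) The constant: in your ``case 1'' branch the triangle-inequality routing gives a bound on $d(W(\ell),U(\ell))$ of roughly $|\alpha_j|+|\pi_j|+|b_j-a_j|+|\alpha_j|$, i.e.\ a multiple of $k$, and the claim that ``routing a little more carefully\dots sharpens the constant to the $k$ claimed'' is asserted, not proved; the same degradation appears in the overflow branch, where your rerouted word is only a $2k$-fellow-traveler because the common tail is shifted by up to $a_j-b_j-|\pi_j|\le 2k$, and in the $j=m$ boundary case. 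Since the lemma's conclusion is the specific constant $k$ (which the paper then uses as $\kappa(G)$ and feeds into Lemma \ref{lem:husdDist} to get $2\kappa(G)+1$), this is not a cosmetic issue but a missing part of the statement.

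(2) The step you yourself flag as the main obstacle is genuinely incomplete, and the induction you invoke does not do the job as stated. If you apply the full trichotomy inductively to the sub-diagram $D_1\cup\cdots\cup D_j$, the inductive call may return alternative 2 or 3 (a shortening of one side of the sub-diagram) rather than fellow-traveling of its two sides; splicing a much shorter prefix $P'$ into $W=PT$ gives a word whose fellow-traveler constant is $\max(k,\,|P|-|P'|)$, because the tail is shifted by the full amount shortened, and $|P|-|P'|$ is not bounded by $k$. So the induction does not ``supply the fellow-traveler control on the modified prefix.'' What you actually need (and almost say) is the weaker, direct observation that up to the \emph{first} overflow the hypotheses of your case-1 analysis hold on the sub-diagram (all arcs geodesic, corner drift at most $k$), so the prefix control comes from that analysis rather than from the lemma applied recursively; but then the constant problem of point (1) reappears there, and the whole argument needs the sharper per-region bookkeeping (using $|\alpha_j|+|\beta_j|+|\pi_{j-1}|+|\pi_j|\le k$, so that any two points on one region's boundary are at distance at most $k/2$, and the dichotomy ``either $|\alpha_j|\le|\beta_j|+|\pi_{j-1}|+|\pi_j|$ for every $j$, or a single-region replacement shortens'') carried out carefully to land on the stated constant. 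Until that bookkeeping is done, the proposal is a plausible outline of Peifer's argument rather than a proof of Lemma \ref{lem:thinDiagForKFT}.
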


By Lemma \ref{lem:thinDiagForKFT} we have that either $W$ and $U$ are $\kappa(G)$-fellow-travelers or that there is $W'$ that is shorter than $W$ and is a $\kappa(G)$-fellow-traveler of $W$. Since $\len{W'}<\len{W}$ implies that $W'\prec_p W$ we are also done in this case. Thus, the regularity of $L_G$ is established. To complete the proof of Proposition \ref{prop:LGisBiAutoStruct} we show that $L_G$ has the fellow-traveler property. We do so using Lemma \ref{lem:husdDist}. Suppose $W$ and $U$ are Peifer minimal and that there are $x,y\in\pmX\cup\Set{\eps}$ such that $xW=Uy$ in $G$. Let $M'$ be an $\Omega$-minimal derived diagram with boundary cycle $\rho\delta^{-1}$ such that $\rho$ labeled by $xW$ and $\delta$ is labeled by $Uy$. Choose a set of syllable-induced distinguished set of vertices and consider the proper $V(6)$ structure it induces on $M'$. By Proposition \ref{prop:CCtoReduce} and by the minimality of $W$ and $U$ we have that $M'$ is a $(\rho,\delta)$-thin diagram. Thus, the conditions of Lemma \ref{lem:husdDist} hold for $xW$ and $Uy$ with the constant $\kappa(G)$. Consequently, $xW$ and $Uy$ are $(2\kappa(G)+1)$-fellow-travelers. The proof of Proposition \ref{prop:LGisBiAutoStruct} is completed.

\section{Appendix}

In this appendix we prove Lemma \ref{lem:TechPropOfTwoGenGrp} and Lemma \ref{lem:ThreeSylbGeoSpecialCase} and also the Appel-Schupp syllable length condition (Definition \ref{def:ASSyllLenCond}) for finite Shephard groups on two generators. 

\begin{remark}
We conjecture that the syllable length condition (Definition \ref{def:ASSyllLenCond}) hold in general for all Shephard groups on two generators even without the finiteness assumption. We don't have at the moment proof of this assertion. However, here are some supporting evidences. As was already mentioned, it holds for Artin groups on two generators \cite[Lemma 6]{AS83}). Also, by using the result of this appendix, we can show that the following (non finite) two generator Shephard group $H$ has the syllable length condition. Let
\[
\Pres{a,b | a^{2r}=b^{2s}=1 \quad \br{a,b}{t} = \br{b,a}{t}}
\]
The following Coxeter group is a homomorphic image of $H$:
\[
\Pres{x,y | x^2=y^2=1 \quad \br{x,y}{t} = \br{y,x}{t}}
\]
But, the Coxeter group is a finite so the syllable length condition holds and consequently it holds in $H$. There are also other examples of similar nature.
\end{remark}

Let $\mathcal{S}(r,s,t)$ be a finite Shephard group generated by two generators with the following presentation:
\[
\Pres{a,b | a^r=1,\, b^s=1,\, \br{a,b}{t} = \br{b,a}{t}  }
\]

We start by verifying Lemma \ref{lem:TechPropOfTwoGenGrp} and the syllable length condition. We deal with Lemma \ref{lem:ThreeSylbGeoSpecialCase} at the end of this appendix. The classification of Todd and Shephard \cite{ST54} implies that the following condition holds when $t\geq3$:
\[
\frac{1}{r}+\frac{1}{s}+\frac{2}{t}>1
\]
Also, if $t$ is odd then $r=s$. Thus, we can give the possible values of $(r,s,t)$
and the sizes of $H(r,s,t)$ (computed by GAP); these are given in the following table:
\begin{center}
\begin{tabular}{|c|c|}
  \hline
  $(r,s,t)$ & Order of $H(r,s,t)$ \\
  \hline
  $(r,s,2)$ & $rs$ \\
  $(2,2,t)$ & $2t$ \\
  $(2,s,4)$ & $2s^2$ \\
  $(3,3,3)$ & $24$ \\
  $(3,3,4)$ & $72$ \\
  $(3,3,5)$ & $360$ \\
  $(4,4,3)$ & $96$ \\
  $(5,5,3)$ & $600$ \\
  $(3,5,4)$ & $1800$ \\
  \hline
\end{tabular}
\end{center}

The case where $t=2$ follows since then the group is the group $\mathbb{Z}_r\times \mathbb{Z}_s$. If $r=s=2$ then we have a finite Coxeter group on two generators and the theorem holds by Lemma 6 and Lemma 7 of \cite{AS83}. The last six cases in the table above can be verified using a computer (using software such as GAP or MAGMA). We suppress the gory details. However, to convince the reader, these are the finitely many group inequalities one has to check:

\begin{description}
\item For all $U\in\mathcal{W}(a,b)$ such that $\lls{a}{U}\leq r/2$ and $\lls{b}{U}\leq s/2$ we have:
\begin{enumerate}
 \item If $\lp{U}\leq4$ then $U\neq1$ (there are finitely many such
 words).
 \item If $t>3$ and $\lp{U}=6$ then $U\neq1$.
 \item If $t>4$ and $\lp{U}=8$ then $U\neq1$.
 \item If $\lp{U} = t$ and $|V|<|U|$ then $UV\neq1$
 \item If $\lp{U}<t$ and $|V|\leq|U|$ then $UV\neq1$
\end{enumerate}
\end{description}

We are left with the case where $r=2$, $t=4$, and $s$ is arbitrary. Take, $Q_1=\Pres{x,y|x^s=y^s=1,\, xy=yx}$, $Q_2=\Pres{z| z^2=1}$, and $\varphi:Q_1\to Q_1$ is the homomorphism such that $\varphi(x)=y$ and $\varphi(y)=x$. Then $\varphi^2=1$ so we have a semi-direct product $H=Q_1\underset{\varphi}{\ltimes} Q_2$ which has the following presentation:
\[
\Pres{x,y,z| \begin{array}{l}
  x^s=y^s=z^2=1,\, xy=yx,\, \\
  z x z = y,\, zyz = x
\end{array}
}
\]
$H$ is isomorphic to $G$ by the isomorphism $\psi:H\to G$ where
\[
\psi(x)=b,\; \psi(y)=aba,\; \psi(z)=a
\]
Hence,
\[
G \cong
(\mathbb{Z}_s\times\mathbb{Z}_s)\underset{\varphi}{\ltimes}\mathbb{Z}_2
\]
Knowing $G$ concretely allows us to verify the needed inequalities. Take $U\in\mathcal{W}(a,b)$. First, we show that if $U=1$ in $G$ non-trivially then $\lp{U}\geq8$. We can assume that $U$ is freely-reduced (because if $\widetilde{U}$ is obtained from $U$ by free reduction then $\widetilde{\lp{U}}\leq\lp{U}$). Let $k$ be the number of times $a$ appears in $U$. We claim that $k$ is even. Indeed, if $k$ is not even then $\psi^{-1}(U)$ contains $z$ and thus $\psi^{-1}(U)\neq1$ in $H$ which contradict our assumption that $U=1$ in $G$. Each element of $\mathcal{W}(a,b)$ has a cyclic conjugate that has even syllable length. Moreover, if $U=1$ in $G$ then each cyclic conjugate of $U$ equals $1$ in $G$. Therefore, we can assume that $\lp{U}$ is even and starts with $a$. This leaves us to check that $ab^{k_1}ab^{k_2}\neq1$ in $G$ where $0<|k_1|,|k_2|<s$. Indeed, $\psi^{-1}(ab^{k_1}ab^{k_2}) = zx^{k_1}zx^{k_2}=y^{k_1} x^{k_2} \neq 1$.
Next, suppose that $\lp{U} \leq 4$, $\lls{a}{U}=1$, $\lls{b}{U}\leq s/2$, and that $UV=1$ in $G$ non-trivially. We show that if $\lp{U} = 4$ then $|U|\leq|V|$. Suppose $UV = a b^{k_1} a b^{k_2} \cdots a b^{k_n}$ and $U = a b^{k_1} a b^{k_2'}$ where $k_2=k_2' + k_2''$. By above discussion we have that $n\geq 4$ and $n$ is even. Also, we have that $|U| = 2 + |k_1| + |k_2'|$. If we denote by $\ell$ the number of $a$ in $V$ then we have $|V|=\ell + |k_2''| + |k_3| + \cdots + |k_n|$. Now, $1=\psi^{-1}(UV) = y^{k_1 + k_3 + \cdots + k_{n-1}} x^{k_2' + k_2'' + k_4 + \cdots + k_{n}}$ and thus $s\mid k_1 + k_3 + \cdots + k_{n-1}$ and $s\mid k_2' + k_2'' + k_4 + k_6 \cdots + k_{n}$. So, there is some $t$ such that $ts-k_1 = k_3 + k_5 + \cdots + k_{n-1}$ which implies that $|k_3| + |k_5| + \cdots + |k_{n-1}| \geq |k_3 + k_5 + \cdots + k_{n-1}| = |tr-k_1| \geq |k_1|$ (using here the assumption that $|k_1|\leq s/2$). Similarly, $|k_2''| + |k_4| + \cdots + |k_{n-1}| \geq |k_2'|$. Consequently, $|V| \geq \ell + |k_1| + |k_2'|$. Since $n\geq4$ we get that $\ell\geq2$ and thus $|V| \geq 2 + |k_1| + |k_2'| = |U|$ as needed. Similar considerations prove this assertion if $UV = b^{k_1} a b^{k_2} \cdots a b^{k_n} a$. Also, a similar considerations prove that $|U|<|V|$ if $\lp{U} < 4$.

We conclude the appendix with the verification of Lemma \ref{lem:ThreeSylbGeoSpecialCase}. In the lemma $m_{ij}=3$ so this translates to $t=3$ in current notation. Thus $(r,s,t)$ is in the following set:
\[
\Set{(2,2,3),\, (3,3,3),\, (4,4,3),\, (5,5,3)}
\]
The first two cases are straightforward since when $r=s=2$ or $r=s=3$ the conditions $\lls{a}{U}\leq s/2 \leq 1.5$ and $\lls{b}{U}\leq s/2 \leq 1.5$, and the same conditions on $V$ imply that $\lp{U}=|U|$ and $\lp{V}=|V|$. Thus, if $U$ has the form $a^{\eps_1}b^{\eps_2}a^{\eps_3}$ then $V$ has the form $b^{\delta_1}a^{\delta_2}b^{\delta_3}$ ($\eps_1,\ldots,\delta_3 \in \Set{-1,1}$) and the lemma follows. We are left with the following cases: $r=s=4$ or $r=s=5$. Here, we may have $\lls{a}{U} = 2$ and $\lls{b}{U} = 2$. By exhaustive search in the group (which are finite; using a computer) we get that the only possible cases when $r=s=5$ are the following:
\begin{enumerate}
 \item $U=a b^2 a^{-2}$ and $V=b^{-1} a^{-1} b a^{-1} b$
 \item $U=a^2 b^{-2} a^{-1}$ and $V=b^{-1} a b^{-1} a b$
 \item $U=a^{-2} b^2 a$ and $V=b a^{-1} b a^{-1} b^{-1}$
 \item $U=a^{-1} b^{-2} a^2$ and $V=b a b^{-1} a b^{-1}$
\end{enumerate}
When $r=s=4$ we get the same cases with the exception that exponents $2$ can be replaced with $-2$ or vice versa. Thus, the conclusions of the lemma hold.

\end{document}